\documentclass[a4paper,12pt,oneside]{article}
\usepackage[english]{babel}
\usepackage[T1]{fontenc} 
\usepackage[utf8]{inputenc}
\usepackage{amsthm}
\usepackage{bbm}
\usepackage{amsmath}
\usepackage{amssymb}  
\usepackage{indentfirst}
\usepackage{fancyhdr}
\usepackage{amsthm}
\usepackage{graphicx}
\usepackage{pdfpages}
\usepackage{esint}
\usepackage{url}

\usepackage[numbers,sort&compress]{natbib}

\numberwithin{equation}{section}

\theoremstyle{plain} 
\newtheorem{thm}{Theorem}[section] 
\newtheorem{cor}[thm]{Corollary} 
\newtheorem{lem}[thm]{Lemma} 
\newtheorem{prop}[thm]{Proposition} 
 
\newtheorem{dfn}[thm]{Definition}

\usepackage{xcolor}                    
\definecolor{custom-blue}{RGB}{0,99,166} 
\usepackage{hyperref}
\hypersetup{colorlinks=true, allcolors=custom-blue}

\usepackage{geometry}
\allowdisplaybreaks[4]

\begin{document}

\author{$\text{\sc{Raffaella Giova}}^\blacksquare$$, \text{\sc{Antonio Giuseppe Grimaldi}}^\clubsuit$, $\text{\sc{Stefania Russo}}^\spadesuit$ 
}

\title{  Almost Lipschitz regularity   for solutions of elliptic equations with discontinuous coefficients}

\maketitle

\begin{abstract}
\noindent We are interested in the local  higher  integrability of solutions to elliptic equations with linear growth  of the form 
 $$-\text{ div}A(x,Du)=f(x). $$
Under a Besov regularity assumption both on the partial map  $x \mapsto A(x,\xi)$  and the datum $f$, we prove that  the solutions are almost Lipschitz continuous, i.e.\ their gradients  belong  locally to $L^q$, for any finite exponent $q$. In turn, solutions are locally $\gamma$-H\"older continuous, for every $\gamma \in (0,1)$.  The difficulty arising from the lack of an explicit second variation for the problem is overcome by testing the equation with a function proportional to a power of the finite difference quotient of the solution. 
%through a Moser-type iteration scheme involving fractional derivatives, which,
To the best of our knowledge, this technique  is used in this context for the first time.  We also provide an example showing the sharpness of our result  in the scale of Lebesgue spaces. 
\end{abstract}

\medskip
\noindent \textbf{Keywords:} Besov spaces, Gradient regularity, H\"older regularity.
\medskip \\
\medskip
\noindent \textbf{MSC 2020:} 49N60; 46E35; 35B65; 35J25.

\let\thefootnote\relax\footnotetext{
			\small $^{\blacksquare}$DiSEG, Università degli Studi di Napoli ``Parthenope'', Via Generale Parisi, 13, 80132 Napoli, Italy. E-mail: \textit{raffaella.giova@uniparthenope.it}}
\let\thefootnote\relax\footnotetext{
			\small $^{\clubsuit}$Dipartimento di Ingegneria, Università degli Studi di Napoli ``Parthenope'',
Centro Direzionale Isola C4, 80143 Napoli, Italy. E-mail: \textit{antoniogiuseppe.grimaldi@collaboratore.uniparthenope.it}}
\let\thefootnote\relax\footnotetext{
			\small $^{\spadesuit}$Dipartimento di Matematica e Applicazioni ``R. Caccioppoli'', Università degli Studi di Napoli ``Federico II'', Via Cintia, 80126 Napoli,
 Italy. E-mail: \textit{stefania.russo3@unina.it}}

\section{Introduction}
\noindent  The aim of this paper  is to  prove  the almost Lipschitz continuity of the local weak solutions of equations of the form 
\begin{equation}
    -\text{ div}A(x,Du)=f(x) \qquad \text{in } \Omega, \label{equation}
\end{equation}
where $\Omega \subset \mathbb{R}^n$ is an open set, $n\geq 2$, $A: \Omega \times \mathbb{R}^n \to \mathbb{R}^n$ is a Carath\'eodory function  such that $A(\cdot,0) \in L^2_{\textrm{loc}}(\Omega)$  and $f \in B^\alpha_{\frac{n}{2 \alpha},p,\textrm{loc}}(\Omega)$, for $\alpha \in (0,1)$ and $p \in \left[1,  \frac{2n}{n-2\alpha}\right] $. We assume that there exist positive constants $\nu, L $ such that

\begin{equation}\label{A1}
    \langle A(x, \xi)-A(x, \eta),\xi-\eta\rangle  \ge \nu|\xi - \eta|^2 \tag{A1}
\end{equation}
\begin{equation}\label{A2}
     |A(x, \xi)-A (x, \eta)| \le L|\xi - \eta| \tag{A2}
\end{equation}

\noindent for a.e.\ $x \in \Omega$ and all $\xi,\eta \in \mathbb{R}^{ n}$.
Concerning the dependence on the $x$-variable, we assume that
there exists a sequence of measurable non-negative functions $g_k \in L^\frac{n}{\alpha}_{\textrm{loc}}(\Omega)$ such that for every $\Omega' \Subset \Omega$
\begin{equation}
    \displaystyle\sum_{k=0}^{\infty} \Vert g_k \Vert^{p}_{L^\frac{n}{\alpha}(\Omega')} < \infty, \label{gk}
\end{equation}
for $1 \le p \le \frac{2n}{n-2\alpha} $, and at the same time  there exists $\mu \in [0,1]$ such that 
\begin{equation}\tag{A3}
    |A(x,\xi)-A(y, \xi)| \leq |x-y|^{\alpha} (g_k(x)+g_k(y))  (\mu^2+|\xi|^2)^\frac{1}{2}, \label{A3}
\end{equation} 
\noindent for a.e.\ $x,y \in \Omega$ such that $2^{-k} \text{diam}(\Omega) \leq |x-y| < 2^{-k+1}\text{diam}(\Omega)$ and for every $\xi \in \mathbb{R}^{ n}$. We note that assumption \eqref{A3} tells us that the map $x \mapsto A(x,\xi)$ belongs  locally  to the Besov space $B^\alpha_{\frac{n}{\alpha},p}(\Omega)$ (see Theorem \ref{def Bes} below). 
%\blue Prof, c'era un punto interrogativo, ma non abbiamo capito a cosa si riferisse : The study of the regularity of solutions to variational problems with Besov coefficients has recently attracted considerable attention (see for examples \cite{Baison,balci,bc,Clop,DahC,Giova,gps,GI,GR,km2}).\nc

\begin{dfn}
A function $u\in W_{\textrm{loc}}^{1,2}(\Omega)$
is a \textit{local weak solution} of equation \eqref{equation}
if and only if, for any test function $\varphi\in \mathcal{C}_{0}^{\infty}(\Omega)$,
the following integral identity holds:
$$
\int_{\Omega}\langle A(x,Du),D\varphi\rangle\,dx\,=\,\int_{\Omega}f\varphi\,dx.
$$
\end{dfn}

\noindent A main topic in the analysis of equations as in \eqref{equation} is to investigate how the regularity of the right-hand side $f$  transfers to  the
solutions under suitable assumptions on the coefficients of the operator $A$.
\\ It is known that for solutions of the $p$-Laplacian type equations or systems with divergence form right-hand side such as
$$\mathrm{div} (|Du|^{p-2}Du)= \mathrm{div} (|F|^{p-2}F) \,, \quad p>1 \,, $$
 Calder\'on-Zygmund estimates of the type
\begin{equation}
    F \in L^q_\mathrm{loc}(\Omega) \quad \Longrightarrow \quad Du \in L^q_\mathrm{loc}(\Omega) \,, \quad q>p \,,\label{czest}
\end{equation}
hold. The scalar case was treated in \cite{Iw}, while the vectorial one in \cite{dbm}. \\
The result in \eqref{czest} is still true for solutions of elliptic equations of the form 
\begin{equation}
    \mathrm{div} A(x,Du)= \mathrm{div} (|F|^{p-2}F) \,, \label{sys}
\end{equation}
 where the vector field $A$ is assumed to satisfy standard $p$-growth and ellipticity conditions and a continuity dependence on the $x$-variable (see \cite{Iw}). In the case of systems of the type \eqref{sys}, Kristensen and Mingione \cite{km} proved that \eqref{czest} holds, provided $q$ is less than a threshold parameter if $n>2$, or for any $q$ if $n=2$.

Extensions of Calder\'on-Zygmund estimates to non-uniformly elliptic  problems 
are addressed in \cite{am}, for systems with $p(x)$-growth conditions, and in \cite{cm}, for double phase type equations. We also refer to \cite{min} for a comprehensive Calder\'on-Zygmund theory in the context of measure data problems. In the setting of Morrey-Campanato spaces we recall the recent paper \cite{Seppecher}.

Iwaniec and Sbordone \cite{Iw2,is} developed the theory of the integrability of the gradient of solutions for linear equations with coefficients of vanishing mean oscillation (VMO), which need not be continuous.
A first non-linear counterpart for equations of 
$p$-Laplacian type can be found in \cite{kin}; see also \cite{kin2} for global estimates. 
\\Since then, the regularity theory for solutions to variational problems with VMO coefficients has attracted considerable attention in the last years. Baison et al.\ \cite{Baison} established  Calder\'on-Zygmund estimates \eqref{czest} for solutions to equations  \eqref{sys} in the linear growth case.
In the homogeneous case, that is when $F=0$,  Cupini et al.\ \cite{cupini} proved that
the $W^{1,n}$-regularity of the coefficients implies that the gradient of solutions belongs to $L^q$, for any finite exponent $q$ (recall that $W^{1,n}$ embeds into the space of VMO functions). In turn, this gives that solutions are $\gamma$-H\"older continuous, for any exponent $\gamma \in (0,1)$. The  Lipschitz continuity cannot be expected  without assuming that the coefficients belong to  a Sobolev space strictly contained in  $W^{1,n}$ (\cite{emm,epdn,ggt,gmp}) 
and the right-hand side $f$ belongs to a function space strictly contained in $L^n$ (\cite{cia1,ku,li,stein})  or  to the Lorentz space $L^{n,\infty}$, if in addition it is radially decreasing (\cite{Russo}).
% OPPURE When the datum $f$ is radially decreasing, its integrability can be weakened; for instance, in \cite{Russo} the local boundedness of the gradient is achieved by assuming that $f$ belongs to the Lorentz space $L^{n,\infty}$. 
%We quote \cite{Russo} where the local boundedness of the gradient is achieved assuming that the datum $f$ is radially decreasing and belongs to the larger space $L^{n,\infty}$. 
 \\ Since the works \cite{gg1,man},  it is known that 
if the right-hand side $f \in L^\infty$ and the operator $A$ is continuous in the $x$-variable, then solutions of \eqref{equation} are locally H\"older continuous with exponent $\gamma$, for any $\gamma \in (0,1)$, but not locally Lipschitz continuous. 
%If in addition $A$ is a H\"older continuous function of the $x$-variable, then the gradient of solutions is H\"older continuous. 
%(see e.g.\ \cite{gg1,man}).

%Our analysis fits into the setting of problems with VMO coefficients; 
Our analysis naturally fits into  the setting of problems with VMO coefficients; indeed,  the Besov  assumption \eqref{A3} implies that the map $x \mapsto A(x,\xi)$ is \textit{locally uniformly in VMO} (see Section \ref{secnot} for the precise definition).

 Regarding variational problems with Besov coefficients, several results concerning the higher fractional differentiability of solutions have been established in recent years (see for example \cite{Baison,balci,bc,Clop,DahC,Giova,gps,GI,GR,km2}). 
%Here, we are interested in the integrability properties of the gradient of solutions $u$ of the equation \eqref{equation}. More precisely, we shall prove that $Du \in L^q_{\mathrm{loc}}(\Omega)$, for every $q >2$, provided we assume $f \in B^\alpha_{\frac{n}{2 %\alpha},p,\textrm{loc}}(\Omega)$.
% We remark that the differentiability assumption on $f$ gives that $f \in L^\frac{n}{\alpha}_{\mathrm{loc}}(\Omega)$, by Lemma \ref{3.1} below.
%Hence,  our result is not covered by previous paper on the Calder\'on-Zygmund theory and it can be seen   as an improvement of the classical Calder\'on-Zygmund theory, in the sense that we show that the gradient of the solution is as integrable as we want by assuming a mild summability condition on the right-hand side $f$,  that is somehow compensated with a small differentiability assumption.
Here,  we are interested in  the higher integrability properties of the gradient for solutions $u$ to equation \eqref{equation}. More precisely, we establish that $Du \in L^q_{\mathrm{loc}}(\Omega)$ for every finite $q > 2$, under the hypothesis that the right-hand side belongs to the local Besov space $f \in B^\alpha_{\frac{n}{2 \alpha},p,\mathrm{loc}}(\Omega)$.
 It is worth noting that, by virtue of Lemma \ref{3.1} below, this fractional differentiability guarantees the summability condition $f \in L^{\frac{n}{\alpha}}_{\mathrm{loc}}(\Omega)$.
 Crucially, this setting falls outside the scope of classical Calder\'on-Zygmund estimates. Our result provides a novel extension of the standard Calder\'on-Zygmund theory: we demonstrate that arbitrary higher integrability of the gradient can be attained under a mild summability condition on $f$, provided this deficit is counterbalanced by a small fractional differentiability assumption. 

The classical strategy for establishing Lipschitz regularity of solutions typically relies on differentiating the underlying equation and choosing a test function proportional to a suitable power of the gradient $Du$. However, in our setting, this standard procedure is precluded: under assumption \eqref{A3}, the coefficients possess merely fractional differentiability and lack the smoothness required to justify weak differentiation.
 To overcome this obstruction, we build upon a discrete differentiation approach pioneered by B\"ogelein et al.\ \cite{Boge} for the fractional $p$-Laplacian. The core strategy consists in testing equation \eqref{equation} not with the gradient, but with a carefully chosen function proportional to a power of the finite difference quotient $\tau_h u$. By rigorously coupling this technique with a  
 %tailored fractional
Moser iteration scheme , we establish the \emph{almost Lipschitz} continuity of solutions. Specifically, we prove that $u \in \mathcal{C}^{0,\gamma}_{\mathrm{loc}}(\Omega)$ for every exponent $0 < \gamma <1$.
 It is crucial to emphasize that the lack of $L^\infty_{\mathrm{loc}}$ bounds for the gradient is not a technical  limitation  of our proof, but rather an intrinsic feature of the problem. Because our coefficients belong to the Besov space $B^\alpha_{r,p}$ at the \emph{critical} exponent $r=\frac{n}{\alpha}$, they may fail to be continuous. Consequently, the almost Lipschitz regularity achieved in Theorem \ref{main thm} cannot be improved to Lipschitz continuity. We validate this sharpness by constructing a counterexample in Section \ref{sec ex}.
 %, thus closing the question of optimal regularity in this critical fractional regime.

We now state our main result.
\begin{thm}\label{main thm}
  Let $0< \alpha <1$ and $1 \le p \le \frac{2n}{n-2\alpha} $. Let $u \in W^{1,2}_{\textrm{loc}}(\Omega)$ be a local weak solution of \eqref{equation}  and $f \in B^\alpha_{\frac{n}{2 \alpha},p,\textrm{loc}}(\Omega)$. Then, $u \in W^{1,q}_{\textrm{loc}}(\Omega)$ for every $q \in [2,+\infty)$ and the following estimate 
\begin{align}
  \left(  \int_{ B_{R/2} } |Du|^{q} dx \right)^{\frac{1}{q}}  & \leq  \tilde{c}\left(\Vert f \Vert_{B_{\frac{n}{2\alpha}, p}^\alpha(B_{2R})}+   \Vert u \Vert_{W^{1,2}(B_{2R})}  +1 \right) \notag
\end{align}
holds for every concentric balls $B_{R/2} \subset B_{2R} \Subset \Omega $ and for a positive constant  $\tilde{c}=\tilde{c}(n, q,\alpha, \nu, L,p,R)$.  
\end{thm}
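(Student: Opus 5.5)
The plan is a Moser-type iteration carried out on finite difference quotients rather than on second derivatives, in the spirit of Bögelein et al. Fix a cut-off $\eta\in C^\infty_0(B_{2R})$ and a direction $e_s$. For $|h|$ small, consider the difference $\tau_h u(x)=u(x+he_s)-u(x)$ and test the equation \eqref{equation} at $x+h$ and at $x$ with
\[
\varphi=\tau_{-h}\bigl(\eta^2\,|\tau_h u|^{2\beta}\tau_h u\bigr)/|h|^{2\vartheta(1+\beta)}, \qquad \beta\ge 0,
\]
where $\vartheta\in(0,1]$ is a normalisation to be chosen ($\vartheta=1$ gives genuine difference quotients). This is admissible by density, since $u\in W^{1,2}_{\mathrm{loc}}$, after truncating $|\tau_h u|$ at a level $M$ if integrability is not yet known. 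Subtracting the two identities and using a discrete integration by parts, the left side splits as follows.
\begin{itemize}
\item[(i)] The term $\langle A(x+h,Du(x+h))-A(x+h,Du(x)),\,D(\eta^2|\tau_hu|^{2\beta}\tau_hu)\rangle$. By \eqref{A1} it controls $\nu(1+2\beta)\int\eta^2|\tau_hu|^{2\beta}|D\tau_hu|^2$, which is comparable to $(1+\beta)^{-1}\int\eta^2|D(|\tau_hu|^{\beta}\tau_hu)|^2$. The cut-off part is absorbed with \eqref{A2} and Young's inequality.
\item[(ii)] The coefficient term $\langle A(x+h,Du(x))-A(x,Du(x)),\,D(\dots)\rangle$. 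Estimate it by \eqref{A3} with $k$ such that $|h|\sim 2^{-k}$, obtaining
\[
|h|^\alpha\int (g_k(x+h)+g_k(x))(\mu^2+|Du|^2)^{1/2}\,\eta^2|\tau_hu|^{2\beta}|D\tau_h u|\,dx+\text{cut-off terms}.
\]
\item[(iii)] The datum term $\int \tau_h f\,\eta^2|\tau_hu|^{2\beta}\tau_hu$, handled through $|\tau_h f|$ and the Besov norm of $f$.
\end{itemize}

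After Young's inequality in (ii), one needs
\[
|h|^{2\alpha}\int g_k^2(1+|Du|^2)|\tau_hu|^{2\beta}\eta^2 .
\]
Hölder's inequality with $g_k\in L^{n/\alpha}$ puts $g_k^2$ in $L^{n/(2\alpha)}$, and the remaining factor goes in the dual $L^{n/(n-2\alpha)}$. The Sobolev inequality applied to $\eta|\tau_hu|^{\beta}\tau_hu$ yields the $L^{2n/(n-2)}$ norm. Since $\frac{n}{n-2\alpha}<\frac{n}{n-2}$, this leaves room for interpolation and absorption. This is also where the critical exponent shows its role: the $L^{n/\alpha}$ norms of the $g_k$ are small only after summing in $k$ (the $\ell^p$ condition \eqref{gk}), or, equivalently, after shrinking the ball using the locally-uniform-VMO smallness (absolute continuity of $\int g_k^{n/\alpha}$).

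The datum is treated the same way. By Lemma \ref{3.1}, $f\in L^{n/\alpha}_{\mathrm{loc}}$, and $\tau_h f$ is controlled in $L^{n/(2\alpha)}$ with weight $|h|^\alpha$ by the $B^\alpha_{n/(2\alpha),p}$ norm. Dividing by the appropriate power of $|h|$ and taking the $L^p(dh/|h|^n)$ norm then produces a Besov/Nikolskii bound on $|\tau_hu|^{\beta}\tau_hu$.

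The outcome is a reverse-Hölder-type step: uniformly in $h$,
\[
\Bigl\|\tfrac{\tau_h u}{|h|}\Bigr\|_{L^{(2+2\beta)\chi}(B_{\rho'})}\le \bigl(c(1+\beta)\bigr)^{c/(1+\beta)}(\rho-\rho')^{-c/(1+\beta)}\Bigl(\Bigl\|\tfrac{\tau_h u}{|h|}\Bigr\|_{L^{2+2\beta}(B_\rho)}+\|f\|_{B^\alpha}+1\Bigr),
\]
with $\chi=\frac{n}{n-2}>1$, or $\chi$ any large number when $n=2$. Letting $h\to0$ and using the standard characterisation of $W^{1,q}$ by difference quotients turns this into a bound on $Du$. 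Iterating finitely many times, with $2+2\beta_{j+1}=(2+2\beta_j)\chi$ and radii shrinking geometrically from $R$ to $R/2$, reaches any prescribed $q<\infty$. Because only finitely many steps are needed, the constants may blow up with $q$, which is consistent with the claimed dependence $\tilde c(n,q,\dots,R)$. The starting point $\beta=0$ is the classical $W^{1,2}$ Caccioppoli-type bound on difference quotients, controlled by $\|u\|_{W^{1,2}(B_{2R})}$, and the truncation level $M$ is removed by monotone convergence.

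The main obstacle is step (ii): absorbing the coefficient term at the critical integrability $g_k\in L^{n/\alpha}$. With $|h|^\alpha$ only, and no extra $|h|^\epsilon$, the factor $|Du|\,|\tau_hu|^{\beta}$ has to be estimated back in terms of $\tau_hu/|h|$. My plan is to keep $|Du(x)|$ and $|Du(x+h)|$ paired with $|\tau_hu|^{2\beta}$ and bound it through $|\tau_h u|/|h|$ at comparable scales, using a difference-quotient lemma, averaging $\tau_h$ over $|h|<\delta$ if necessary. The smallness needed for absorption comes from choosing $R$, or the sub-balls of the covering, so small that $\sup_k\|g_k\|_{L^{n/\alpha}}$ restricted to them is below $\nu/(c(1+\beta))$. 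This requirement also explains why $\tilde c$ depends on $q$ and $R$.
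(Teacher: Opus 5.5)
There is a genuine gap, located in step (ii), and the remedy you sketch for it does not work. With $\vartheta=1$ and $\Delta_hu=\tau_hu/|h|$, assumption \eqref{A3} supplies only a factor $|h|^{\alpha}$. After Young's inequality the coefficient term therefore becomes
\[
\epsilon\int\eta^2|\Delta_hu|^{2\beta}|D\Delta_hu|^2\,dx+c_\epsilon\,|h|^{2\alpha-2}\int\eta^2\bigl(g_k(x+h)+g_k(x)\bigr)^2(1+|Du|^2)\,|\Delta_hu|^{2\beta}\,dx,
\]
and the prefactor $|h|^{2\alpha-2}$ blows up as $h\to0$. The datum term likewise carries $|h|^{\alpha-1}$. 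What is lost here is $1-\alpha$ derivatives, not integrability, so pairing $|Du|$ with $|\tau_hu|/|h|$, averaging in $h$, or shrinking balls cannot repair it. Indeed, for $\beta=0$ a bound on $\int\eta^2|D\Delta_hu|^2$ uniform in $h$ would give $u\in W^{2,2}_{\mathrm{loc}}$, which is false in general. Take $A(x,\xi)=a(x_1)\xi$ with $a$ satisfying \eqref{A3} but $a\notin W^{1,2}_{\mathrm{loc}}$, and $u=\int_0^{x_1}ds/a(s)$; then $\partial_1u=1/a(x_1)$ is no smoother than $a$. If instead you choose $\vartheta$ so that the powers of $|h|$ match, $\vartheta(1+\beta)=\alpha+\beta$, then Sobolev's inequality for $\eta|\tau_hu|^{\beta}\tau_hu$ only yields $\|\tau_hu\|_{L^{(2+2\beta)n/(n-2)}}\lesssim|h|^{(\alpha+\beta)/(1+\beta)}$. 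That is less than one derivative of $u$ and says nothing about $Du$ in a higher Lebesgue space. So your reverse H\"older step for $\tau_hu/|h|$ with $\chi=n/(n-2)$ cannot be derived.

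The missing idea is to keep the fractional scaling and take a second difference in the same direction, instead of applying Sobolev in $x$.
\begin{itemize}
\item Testing with $\tau_{-h}(\eta^2|\tau_hu|^{\delta-1}\tau_hu)$, without normalisation, gives $\int_{B_s}|\tau_hDu|^2|\tau_hu|^{\delta-1}\lesssim|h|^{\delta-1+2\alpha}(\cdots)$.
\item Lemma \ref{D1}, i.e. $|\tau_h(\tau_hu)|^{\delta+1}\le c\,|\tau_h(|\tau_hu|^{\frac{\delta-1}{2}}\tau_hu)|^2$, together with Lemma \ref{ldiff}, turns this into $\int|\tau_h(\tau_hu)|^{\delta+1}\lesssim|h|^{\delta+1+2\alpha}(\cdots)$. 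This gains a full extra power $|h|^2$.
\item Integrating against $dh/|h|^n$ gives $Du\in B^{2\alpha/(\delta+1)}_{\delta+1,p(\delta+1)/2}$ via Proposition \ref{prop1}. This is where the full $\ell^p$-sum $\sum_k\|g_k\|^p_{L^{n/\alpha}}$ from \eqref{gk} enters (not just $\sup_k$), together with $[f]_{B^\alpha_{n/(2\alpha),p}}$.
\item Corollary \ref{corollary} then gives $Du\in L^{n(\delta+1)/(n-2\alpha)}$. The gain per step is $n/(n-2\alpha)$, not $n/(n-2)$, which is still enough for finitely many iterations.
\end{itemize}

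There is a second, related gap. The H\"older step in (ii) puts $(1+|Du|^2)|\tau_hu|^{\delta-1}$ in $L^{n/(n-2\alpha)}$, so it involves $\|Du\|_{L^{n(\delta+1)/(n-2\alpha)}}$, the very norm being estimated. This term must be absorbed using smallness of $\sum_k\|g_k\|^p_{L^{n/\alpha}(B_R)}$ on small balls, and absorption requires that norm to be finite beforehand. Truncating $\tau_hu$ at a level $M$ does not provide this, because the bad factor is $g_k^2|Du|^2$. The paper obtains finiteness by approximation: it solves problems with mollified $A_\varepsilon$, $f_\varepsilon$, whose solutions are $C^{1,\beta}$, and passes to the limit using $u_\varepsilon\to u$ in $W^{1,2}$.
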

\noindent A straightforward consequence of the previous result is the following
\begin{cor}
   Let $0< \alpha <1$ and $1 \le p \le \frac{2n}{n-2\alpha} $. Let $u \in W^{1,2}_{\textrm{loc}}(\Omega)$ be a local weak solution of \eqref{equation} and $f \in B^\alpha_{\frac{n}{2 \alpha},p,\textrm{loc}}(\Omega)$. Then, $u \in \mathcal{C}^{0,\gamma}_{\textrm{loc}}(\Omega)$, for every $0 < \gamma <1$, and we have the following estimate 
\begin{align}
  [u]_{\mathcal{C}^{0,\gamma}(B_{R/2})} & \leq  \tilde{c}\left(\Vert f \Vert_{B_{\frac{n}{2\alpha}, p}^\alpha(B_{2R})}+  \Vert u \Vert_{W^{1,2}(B_{2R})}  +1   \right), \notag
\end{align}
for every concentric balls $B_{R/2} \subset B_{2R} \Subset \Omega $ and for a positive constant  $\tilde{c}=\tilde{c}(n, \gamma,\alpha, \nu, L,p,R)$. 
\end{cor}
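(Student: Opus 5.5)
The statement to prove is the Corollary. It follows from Theorem \ref{main thm} by Morrey's embedding, so the plan is short.

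Fix $\gamma\in(0,1)$ and choose $q=\frac{n}{1-\gamma}>n$, so that $1-\frac nq=\gamma$. Theorem \ref{main thm} with this exponent gives $u\in W^{1,q}_{\mathrm{loc}}(\Omega)$, together with
\[
\|Du\|_{L^q(B_{R/2})}\le \tilde c\left(\Vert f \Vert_{B_{\frac{n}{2\alpha}, p}^\alpha(B_{2R})}+ \Vert u \Vert_{W^{1,2}(B_{2R})}+1\right).
\]
Here $\tilde c$ depends on $q$, and hence on $\gamma$.

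Next I apply Morrey's inequality on the ball $B_{R/2}$. A ball is a convex Lipschitz domain, so for every $x,y\in B_{R/2}$ we have
\[
|u(x)-u(y)|\le c(n,q)\,|x-y|^{1-\frac nq}\,\|Du\|_{L^q(B_{R/2})}.
\]
The constant here depends only on $n$ and $q$. Equivalently, one can use the Campanato characterization: H\"older's inequality gives $\fint_{B_\rho}|u-u_\rho|\le c\rho\left(\fint_{B_\rho}|Du|^q\right)^{1/q}\le c\rho^{1-n/q}\|Du\|_{L^q}$. Either route shows $u$ has a representative in $\mathcal C^{0,\gamma}(B_{R/2})$, and combining with the previous display yields the stated seminorm bound with $\tilde c=\tilde c(n,\gamma,\alpha,\nu,L,p,R)$.

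Finally, covering any compact subset of $\Omega$ by finitely many such balls gives $u\in\mathcal C^{0,\gamma}_{\mathrm{loc}}(\Omega)$.

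None of these steps is a real obstacle. The only point needing care is the bookkeeping of constants: the Morrey constant must be taken on the fixed ball $B_{R/2}$ and depend only on $n$ and $q$, so that the final constant has the claimed dependence. A fixed ball causes no difficulty for this.
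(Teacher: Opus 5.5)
Your proposal is correct and matches the paper, which gives no separate proof and simply calls the corollary a straightforward consequence of Theorem~\ref{main thm}. Choosing $q=\frac{n}{1-\gamma}>n$ in that theorem and applying Morrey's embedding on $B_{R/2}$, whose constant depends only on $n$ and $q$ by scale invariance, is exactly the intended argument.
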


\noindent A key role in the proof of Theorem \ref{main thm} is to derive suitable estimates on second-order finite differences of the solution $u$.
The first step is to establish some a priori estimates for sufficiently regular solutions.
Inspired by the work \cite{Boge}, we test the equation \eqref{equation} with 
$$\varphi = \tau_{-h}(\eta^2 |\tau_{h}u|^{q-2} \tau_{h}u),$$
where $q \ge 2 $ and $\eta$ is a cut-off function. Thus, we obtain uniform estimates for the $B^{\frac{2\alpha}{q}+1}_{q,\frac{pq}{2}}$ norm of $ \tau_h(\tau_h u)$, for every $q \geq 2$.
\iffalse
The next step relies on improving the integrability exponent of $Du$ from $2$ to any $q>2$.
The approach is similar in spirit to a Moser-type iteration. If $Du \in L^q_{\textrm{loc}}(\Omega)$ for some $q \ge 2$, the estimate on second-order finite differences, together with Proposition \ref{prop1}, ensures that $Du \in B^\frac{2\alpha}{q}_{q,\frac{pq}{2},\textrm{loc}}(\Omega)$. This implies $Du \in L^{\frac{nq}{n-2\alpha}}_{\textrm{loc}}(\Omega)$  by the Sobolev-type embedding for Besov spaces. 
The scheme is iterated only finitely many times, until the target integrability exponent is achieved. In contrast with Moser’s iteration for the Laplace equation, this method cannot be carried out infinitely many times.

Finally, Theorem \ref{main thm} follows by proving that the a priori estimate is preserved in passing to the limit.
\fi

The next step is to improve the integrability of the gradient $Du$ from the natural energy space $L^2$ to $L^q$, for any arbitrarily large exponent $q>2$.
 To achieve this, we  use  a bootstrap  argument  inspired by Moser's iteration, but  suitable  adapted to the fractional setting. Specifically, assuming $Du \in L^q_{\mathrm{loc}}(\Omega)$ for some $q \ge 2$, our second-order finite difference estimates, combined with Proposition \ref{prop1}, yield the refined Besov regularity $Du \in B^{\frac{2\alpha}{q}}_{q,\frac{pq}{2},\mathrm{loc}}(\Omega)$. By invoking sharp Sobolev-type embeddings for Besov spaces, this fractional differentiability translates directly into the higher integrability $Du \in L^{\frac{nq}{n-2\alpha}}_{\mathrm{loc}}(\Omega)$.
 A key difference from the classical Moser iteration for the Laplace equation is the finite nature of our scheme. Due to the structural properties of the fractional embeddings, the bootstrap cannot be carried out infinitely many times to yield a  $L^\infty$ bound. Nevertheless, this finite sequence of iterations is perfectly calibrated to reach any targeted higher integrability exponent.
Finally, Theorem \ref{main thm} follows by proving that the a priori estimate is preserved in passing to the limit.
\\
We conclude this introduction by describing the organization of the paper. After a brief presentation of the notation and preliminary results,
in Section \ref{secbesov} we state some elementary properties and fundamental estimates for Besov spaces.  
Section \ref{secapriori} is devoted to the proof of Theorem \ref{main thm}. Eventually, in Section \ref{secapp} we prove our main result by using an approximation argument. We conclude exhibiting an example of equation for which the result in Theorem \ref{main thm} is sharp, in the sense that we cannot expect to obtain local Lipschitz continuity of solutions to \eqref{equation} under the assumption \eqref{A3}. This counterexample was constructed by Jin et al.\ \cite{jin} to show that solutions to elliptic equations
with continuous coefficients may not be Lipschitz continuous.

\section{Notation and preliminary results}\label{secnot}

\noindent Here, we introduce some notation.
 We will use the symbols $C$ or $c$ to denote general positive constants. Different occurrences from line to line will be still denoted using the same letters. Relevant dependencies on parameters will be emphasized using parentheses or subscripts. 
We denote by $B(x,r)=B_{r}(x)= \{ y \in \mathbb{R}^{n} : |y-x | < r  \}$ the ball centered at $x$ of radius $r$. We shall omit the dependence on the center and on the radius when no confusion arises. 
%Moreover, we set $S_r := \{ x \in \mathbb{R}^n : |x| = r \}$ the sphere of radius $r$ on %\mathbb{R}^n$. 
For a function $v \in L^{1}(B)$, the symbol
\begin{center}
$v_B:=\displaystyle\fint_{B} v(x) d x = \dfrac{1}{|B|} \displaystyle\int_{B} v(x) d x$.
\end{center}
will denote the integral mean of the function $v$ over the ball $B$.

%\ste che vogliamo fare con la funzione V?\\
%Let $1 \le p < + \infty$. We define an auxiliary function by
%$$V_{p}(\xi):=|\xi|^\frac{p-2}{2} \xi,$$
%for all $\xi\in \mathbb{R}^{n}$. One can easily check that, for $p \geq 2$, it holds
%\begin{gather}
%|\xi|^p \leq  |V_p(\xi)|^2. \label{Vp}
%\end{gather}
%For the auxiliary function $V_{p}$, 
 We recall the following estimate (see e.g. \cite[Lemma 8.3]{giusti}). 
\begin{lem}\label{D1}
Let $1<p<+\infty$. There exists a constant $c=c(n,p)>0$ such that
\begin{center}
$c^{-1}(|\xi|^{2}+|\eta|^{2})^{\frac{p-2}{2}} |\xi-\eta|^{2}\leq {\left ||\xi|^\frac{p-2}{2} \xi-|\eta|^\frac{p-2}{2} \eta \right|^{2}} \leq c(|\xi|^{2}+|\eta|^{2})^{\frac{p-2}{2}} |\xi-\eta|^{2}$
\end{center}
for any $\xi, \eta \in \mathbb{R}^{n} $.
\end{lem}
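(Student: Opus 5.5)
The statement just above is Lemma \ref{D1}, the two-sided comparison between $\left||\xi|^{\frac{p-2}{2}}\xi-|\eta|^{\frac{p-2}{2}}\eta\right|^2$ and $(|\xi|^2+|\eta|^2)^{\frac{p-2}{2}}|\xi-\eta|^2$. I plan to reduce it to a one-dimensional integral representation and then to an elementary estimate of that integral.

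\textbf{Reduction.} The case $\xi=\eta=0$ is trivial. Otherwise the segment $\xi_t=\eta+t(\xi-\eta)$, $t\in[0,1]$, meets the origin for at most one $t$. Set $W(z)=|z|^{\frac{p-2}{2}}z$. This map is locally Lipschitz away from $0$ and continuous at $0$ since $p>1$. Its differential is
$$DW(z)=|z|^{\frac{p-2}{2}}\Big(I+\tfrac{p-2}{2}\tfrac{z\otimes z}{|z|^2}\Big).$$
The eigenvalues of $DW(z)$ are $|z|^{\frac{p-2}{2}}$ and $\frac p2|z|^{\frac{p-2}{2}}$, so the matrix is symmetric and positive definite. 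Hence
$$W(\xi)-W(\eta)=\int_0^1 DW(\xi_t)\,dt\,(\xi-\eta).$$

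\textbf{Upper bound.} From the eigenvalue bounds,
$$|W(\xi)-W(\eta)|\le c\int_0^1|\xi_t|^{\frac{p-2}{2}}dt\,|\xi-\eta|.$$

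\textbf{Lower bound.} Pair the difference with $\xi-\eta$ and use the smallest eigenvalue $\min(1,\frac p2)\,|\xi_t|^{\frac{p-2}{2}}$:
$$|W(\xi)-W(\eta)|\,|\xi-\eta|\ge \langle W(\xi)-W(\eta),\xi-\eta\rangle\ge \min\!\left(1,\tfrac p2\right)\int_0^1|\xi_t|^{\frac{p-2}{2}}dt\,|\xi-\eta|^2.$$

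Both bounds then reduce to the single key estimate
$$c^{-1}(|\xi|+|\eta|)^{s}\le\int_0^1|\eta+t(\xi-\eta)|^{s}\,dt\le c\,(|\xi|+|\eta|)^{s},\qquad s=\tfrac{p-2}{2}>-\tfrac12 .$$
Squaring this and using $|\xi|+|\eta|\simeq(|\xi|^2+|\eta|^2)^{1/2}$ gives the lemma.

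\textbf{Proof of the key estimate.} This is the main step, and the only delicate part is $s<0$.

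For $s\ge0$, the upper bound is immediate from $|\xi_t|\le|\xi|+|\eta|$. For the lower bound, assume without loss of generality $|\xi|\ge|\eta|$. For $t\in[3/4,1]$,
$$|\xi_t|\ge|\xi|-(1-t)|\xi-\eta|\ge|\xi|-\tfrac12|\xi|=\tfrac12|\xi|\ge\tfrac14(|\xi|+|\eta|),$$
which gives the lower bound.

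For $-\frac12<s<0$ the two directions swap. The lower bound is now immediate from $|\xi_t|\le|\xi|+|\eta|$. For the upper bound, decompose $\xi_t$ into the component along $d=\xi-\eta$ and the orthogonal one, so that
$$|\xi_t|\ge|\langle\eta,d\rangle/|d|+t|d||=|d|\,|t-t_0|$$
for some $t_0\in\mathbb{R}$. Then $\int_0^1|t-t_0|^{s}dt\le c(s)$ because $s>-1$. This yields the bound $c\,|d|^{s}$, which is the right size when $|d|\simeq|\xi|+|\eta|$.

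In the remaining case $|d|\le\frac12(|\xi|+|\eta|)$, the whole segment stays far from the origin: $|\xi_t|\ge\max(|\xi|,|\eta|)-|d|\ge c(|\xi|+|\eta|)$. The bound then follows directly.

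These estimates give the constants $c=c(n,p)$; in fact they depend only on $p$.
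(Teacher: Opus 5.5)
Your strategy is the standard one. The paper gives no proof of this lemma and only cites [Giusti, Lemma 8.3], whose argument is the same as yours: the integral representation of $W(\xi)-W(\eta)$ along the segment, plus the two-sided bound for $\int_0^1|\eta+t(\xi-\eta)|^{s}\,dt$ with $s>-1$. The following parts are correct as written: the eigenvalue computation for $DW$, the reduction of both inequalities to this integral, the case $s\ge0$, and the case $-\frac12<s<0$ with $|d|$ comparable to $|\xi|+|\eta|$.

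The last case fails as written. From $|d|\le\frac12(|\xi|+|\eta|)$ you only get
$$\max(|\xi|,|\eta|)-|d|\ge\max(|\xi|,|\eta|)-\tfrac12(|\xi|+|\eta|)=\tfrac12\bigl||\xi|-|\eta|\bigr|,$$
which vanishes whenever $|\xi|=|\eta|$. For example, $\xi=\frac12e_1+\frac{\sqrt3}{2}e_2$ and $\eta=-\frac12e_1+\frac{\sqrt3}{2}e_2$ give $|d|=1=\frac12(|\xi|+|\eta|)$, and your lower bound for $|\xi_t|$ is $0$. So the claimed $\max(|\xi|,|\eta|)-|d|\ge c(|\xi|+|\eta|)$ does not follow.

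The repair is immediate, in either of two ways:
\begin{itemize}
\item Split instead at $|d|\le\frac14(|\xi|+|\eta|)$. Then $\max(|\xi|,|\eta|)-|d|\ge\frac14(|\xi|+|\eta|)$, and in the complementary case $|d|^{s}\le4^{-s}(|\xi|+|\eta|)^{s}$ since $s<0$.
\item Keep the threshold $\frac12$, but add the two triangle inequalities $|\xi|\le|\xi_t|+(1-t)|d|$ and $|\eta|\le|\xi_t|+t|d|$. This gives $|\xi_t|\ge\frac12(|\xi|+|\eta|-|d|)\ge\frac14(|\xi|+|\eta|)$.
\end{itemize}
Either way $\int_0^1|\xi_t|^{s}\,dt\le4^{-s}(|\xi|+|\eta|)^{s}$.

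A smaller point of order concerns $p<2$ when the segment passes through the origin. There, the identity $W(\xi)-W(\eta)=\int_0^1DW(\xi_t)\,dt\,(\xi-\eta)$ is not justified by continuity of $W$ at $0$ alone. It also needs integrability of $|DW(\xi_t)|\lesssim|d|^{s}|t-t_0|^{s}$ near $t_0$. This is exactly the bound you prove later using $s>-1$, so invoke it at that point.
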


Now we state a well-known iteration lemma (see \cite[Lemma 6.1]{giusti} for the proof).
\begin{lem}\label{lm2}
Let $\Phi  :  [\frac{R}{2},R] \rightarrow \mathbb{R}$ be a bounded nonnegative function, where $R>0$. Assume that for all $\frac{R}{2} \leq r < \rho \leq R$ it holds
$$\Phi (r) \leq \theta \Phi(\rho) +A + \dfrac{B}{(\rho-r)^2}+ \dfrac{C}{(\rho-r)^{\gamma}},$$
where $\theta \in (0,1)$, $A$, $B$, $C \geq 0$ and $\gamma >0$ are constants. Then there exists a constant $c=c(\theta, \gamma)$ such that
$$\Phi \biggl(\dfrac{R}{2} \biggr) \leq c \biggl( A+ \dfrac{B}{R^2}+ \dfrac{C}{R^{\gamma}}  \biggr).$$
\end{lem}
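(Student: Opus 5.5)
We prove Lemma \ref{lm2} by the classical geometric iteration (as in \cite[Lemma 6.1]{giusti}): apply the hypothesis along an increasing sequence of radii starting at $R/2$ and accumulating at $R$, and sum the resulting geometric series.

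\textbf{Choice of radii.} Set $\beta:=\max\{2,\gamma\}$ and fix $\lambda\in(0,1)$ so that $\theta\lambda^{-\beta}<1$; for instance $\lambda^{\beta}=\frac{1+\theta}{2}$ works, so $\lambda$ depends only on $\theta$ and $\gamma$. Define
$$r_0=\frac R2,\qquad r_{i+1}=r_i+(1-\lambda)\lambda^i\frac R2 .$$
Then $r_i$ increases to $\frac R2+\frac R2\sum_{i\ge0}(1-\lambda)\lambda^i=R$. Hence every pair $r_i<r_{i+1}$ lies in $[\frac R2,R]$ and is admissible in the hypothesis, with $r_{i+1}-r_i=(1-\lambda)\lambda^iR/2$.

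\textbf{Iteration.} The hypothesis with $r=r_i$, $\rho=r_{i+1}$ gives
$$\Phi(r_i)\le\theta\Phi(r_{i+1})+A+\frac{4B}{(1-\lambda)^2\lambda^{2i}R^2}+\frac{2^\gamma C}{(1-\lambda)^\gamma\lambda^{\gamma i}R^\gamma}.$$
Inserting these inequalities into one another for $i=0,\dots,k-1$ yields
$$\Phi(R/2)\le\theta^k\Phi(r_k)+\sum_{i=0}^{k-1}\theta^i\Big(A+\frac{4B}{(1-\lambda)^2R^2}\lambda^{-2i}+\frac{2^\gamma C}{(1-\lambda)^\gamma R^\gamma}\lambda^{-\gamma i}\Big).$$

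\textbf{Passage to the limit.} Since $\lambda<1$ and $\beta\ge 2,\gamma$, we have $\theta\lambda^{-2}\le\theta\lambda^{-\beta}<1$ and $\theta\lambda^{-\gamma}\le\theta\lambda^{-\beta}<1$. So each of the three series is dominated by $\sum_i(\theta\lambda^{-\beta})^i<\infty$. Because $\Phi$ is bounded and $\theta<1$, the term $\theta^k\Phi(r_k)\to0$ as $k\to\infty$. Letting $k\to\infty$ gives
$$\Phi(R/2)\le c\Big(A+\frac B{R^2}+\frac C{R^\gamma}\Big),\qquad c=\frac{\max\{1,4(1-\lambda)^{-2},2^\gamma(1-\lambda)^{-\gamma}\}}{1-\theta\lambda^{-\beta}},$$
which depends only on $\theta$ and $\gamma$.

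The only delicate point is the choice of $\lambda$: it must make both $\theta\lambda^{-2}$ and $\theta\lambda^{-\gamma}$ smaller than $1$, which the choice via $\beta=\max\{2,\gamma\}$ ensures. Boundedness of $\Phi$ is used exactly once, to discard the remainder $\theta^k\Phi(r_k)$.
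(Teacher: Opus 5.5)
Your proof is correct and is essentially the classical geometric-iteration argument of Lemma 6.1 in Giusti's book, which is where the paper sends the reader instead of giving its own proof. Giusti's version has a single power term; you adapt it by choosing $\lambda$ via $\beta=\max\{2,\gamma\}$, so that $\theta\lambda^{-2}$ and $\theta\lambda^{-\gamma}$ are both below $1$, and this handles the two terms $B/(\rho-r)^2$ and $C/(\rho-r)^\gamma$ correctly.
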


As usual, we denote by $\mathcal{C}^{0,\gamma}(\Omega)$, with $0< \gamma < 1$, the space of functions $v$ such that the following seminorm $$[v ]_{C^{0,\gamma} (\Omega)} =  \sup_{x,y \in \Omega, x \neq y} \frac{|v(x)-v(y)|}{|x-y|^{\gamma}} $$
is finite.

Given a ball $B_r(x_0) \subset \Omega$, let us define
$$ {A}_{B_r(x_0)}(\xi)= \displaystyle\fint_{B_r(x_0)}{A}(x,\xi)dx .$$ 
One can easily check that if ${A}(x,\xi)$  satisfies  \eqref{A1} and \eqref{A2} they also hold true  for the operator ${A}_{B}(\xi)$. Setting
\begin{equation}\label{dfnH}
    \mathcal{H}(x,B_r(x_0)):= \sup_{\xi \neq 0} \dfrac{|{A}(x,\xi)-  {A}_{B_r(x_0)}(\xi)|}{ (\mu^2 + |\xi|^2)^{\frac 1 2}},
\end{equation}
we will say that the map $x \mapsto {A}(x,\xi)$ is \textit{locally uniformly in VMO} if for each compact set $K \subset \Omega$ we have that
\begin{equation}\label{vmounif}
    \displaystyle\lim_{R \rightarrow 0} \sup_{r <R} \sup_{x_0 \in K} \displaystyle\fint_{B_{r}(x_0)} \mathcal{H}(x,B_r(x_0))dx=0. 
\end{equation}

We have that if  ${A}(x,\xi)$  satisfies \eqref{A3} then it has the locally uniform VMO property (see \cite{Baison} for the proof).
\begin{lem}
    Let $A: \Omega \times \mathbb{R}^n \to \mathbb{R}$ be such that \eqref{A1}, \eqref{A2} and \eqref{A3} hold. Then, $A$ is locally uniformly in VMO.
\end{lem}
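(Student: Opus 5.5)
We must prove the last lemma: \eqref{A3} implies that $A$ is locally uniformly in VMO. The plan is to estimate the mean oscillation pointwise in $\xi$, using only the dyadic structure of \eqref{A3} and Hölder's inequality with the exponent $\frac{n}{\alpha}$.

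Fix a compact $K\subset\Omega$ and an open $\Omega'$ with $K\subset\Omega'\Subset\Omega$. Let $r<R_0$ with $R_0<\mathrm{dist}(K,\partial\Omega')/2$, and take $x_0\in K$ and $B=B_r(x_0)$. For every $\xi\neq 0$ and every $x\in B$ we write
$$|A(x,\xi)-A_B(\xi)|\le \fint_B |A(x,\xi)-A(y,\xi)|\,dy .$$
For $x,y\in B$ we have $|x-y|<2r$. We split the $y$-integral over the dyadic annuli $\{2^{-k}d\le|x-y|<2^{-k+1}d\}$, where $d=\mathrm{diam}(\Omega)$. Only the indices $k\ge k_0$ with $2^{-k_0+1}d\approx 2r$ contribute, so $k_0\to\infty$ as $r\to0$. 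On the $k$-th annulus, \eqref{A3} gives
$$|A(x,\xi)-A(y,\xi)|\le (2^{-k+1}d)^\alpha\,(g_k(x)+g_k(y))\,(\mu^2+|\xi|^2)^{1/2}.$$
Dividing by $(\mu^2+|\xi|^2)^{1/2}$ and taking the supremum in $\xi$ then yields
$$\mathcal H(x,B)\le \sum_{k\ge k_0}(2^{-k+1}d)^\alpha\,\frac{1}{|B|}\int_{B\cap\{|x-y|\sim 2^{-k}d\}}(g_k(x)+g_k(y))\,dy.$$

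Next we average in $x$ over $B$. The term with $g_k(x)$ is bounded by $(2^{-k+1}d)^\alpha\fint_B g_k$. By Hölder's inequality,
$$\fint_B g_k\le |B|^{-\alpha/n}\|g_k\|_{L^{n/\alpha}(B)}\le c\,r^{-\alpha}\|g_k\|_{L^{n/\alpha}(B)},$$
so this term is at most $c\,(2^{-k}d/r)^\alpha\|g_k\|_{L^{n/\alpha}(B)}$. The term with $g_k(y)$ is handled in the same way after using Fubini. Since $2^{-k}d\lesssim r$ for $k\ge k_0$, summing over $k$ gives
$$\fint_B\mathcal H(x,B)\,dx\le c\sum_{k\ge k_0}2^{-(k-k_0)\alpha}\,\|g_k\|_{L^{n/\alpha}(\Omega')}\le c\,\sup_{k\ge k_0}\|g_k\|_{L^{n/\alpha}(\Omega')},$$
where the geometric factor has sum bounded by $c(\alpha)$.

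By \eqref{gk} the series $\sum_k\|g_k\|^p_{L^{n/\alpha}(\Omega')}$ converges, so $\|g_k\|_{L^{n/\alpha}(\Omega')}\to0$ as $k\to\infty$. The bound above is uniform in $x_0\in K$ and in all $r<R$, and $k_0\to\infty$ as $R\to 0$. This gives \eqref{vmounif}.

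The main obstacle is that the bound has to hold uniformly in $\xi$. This is handled by normalizing by $(\mu^2+|\xi|^2)^{1/2}$ before taking the supremum, since \eqref{A3} scales exactly with this factor. A second point of care is that $k_0$ must be tied correctly to $r$, so that the scale factor $(2^{-k}d/r)^\alpha$ is summable over $k\ge k_0$.
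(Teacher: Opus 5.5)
Your proof is correct. You bound $\mathcal H(x,B)$ by the average of the normalized difference, split it over the dyadic annuli of \eqref{A3}, and apply H\"older with exponent $\frac{n}{\alpha}$; together with the geometric factor $(2^{-k}d/r)^\alpha\lesssim 2^{-(k-k_0)\alpha}$ this gives $\fint_{B_r(x_0)}\mathcal H(x,B_r(x_0))\,dx\le c\sup_{k\ge k_0(r)}\Vert g_k\Vert_{L^{n/\alpha}(\Omega')}$, which tends to $0$ uniformly in $x_0\in K$ because \eqref{gk} forces $\Vert g_k\Vert_{L^{n/\alpha}(\Omega')}\to 0$. The paper gives no proof of its own and only cites \cite{Baison}, so your computation is a valid self-contained substitute, once you add the routine remark that the continuity in $\xi$ from \eqref{A2} lets you take the supremum over a countable dense set of $\xi$, which makes $\mathcal H(\cdot,B)$ measurable and the exceptional null set in \eqref{A3} independent of $\xi$.
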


\section{Difference quotient}
\label{secquo}
\noindent We recall some properties of the finite difference quotient operator that will be needed in the sequel. 

\begin{dfn}
Let $F$ be a function defined in an open set $\Omega \subset \mathbb{R}^n$ and let $h \in \mathbb{R}^n$. We call the difference quotient of $F$ with respect to $h$ the function
$$ \tau_{h}^1 F(x) =\tau_{h}F(x) :=F(x+h)-F(x) .$$
Moreover, for every $r  \in \mathbb{N} $ we define the $(r+1)$-th finite difference quotient with respect to $h$ as
\begin{align*}
\tau^{r+1}_{h}F(x):= \tau_{h} (\tau^{r}_{h}F(x)).
\end{align*}
\end{dfn}

The function $\tau_{h}F$ is defined in the set
%$$\tau_{h}\Omega := \{  x \in \Omega : x+h \in \Omega \},$$
%and hence in the set
$$\Omega_{|h|}: = \{ x \in \Omega : \mathrm{dist}(x,\partial \Omega)> |h|  \}.$$

We start with the description of some elementary properties that can be found, for example, in \cite{giusti}.
\begin{prop}\label{rapportoincrementale}
Let $F \in W^{1,p}(\Omega)$, with $p \geq1$, and let $G:\Omega \rightarrow \mathbb{R}$ be a measurable function.
Then
\\(i) $\tau_{h}F \in W^{1,p}(\Omega_{|h|})$ and 
$$D_{i}(\tau_{h}F)=\tau_{h}(D_{i}F).$$
(ii) If at least one of the functions $F$ or $G$ has support contained in $\Omega_{|h|}$, then
$$\displaystyle\int_{\Omega}F \tau_h G   dx = \displaystyle\int_{\Omega} G \tau_{-h}F dx.$$
(iii) We have $$\tau_{h} (FG)(x)= F(x+h)\tau_{h} G(x)+G(x) \tau_{h} F(x).$$
\end{prop}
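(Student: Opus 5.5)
The three assertions are elementary and I would verify each one directly from the definition $\tau_hF(x)=F(x+h)-F(x)$ together with translation invariance of Lebesgue measure.

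\emph{Item (i).} The translate $F_h(x):=F(x+h)$ is defined for $x\in\Omega_{|h|}$, since then $x+h\in\Omega$. For $\varphi\in\mathcal{C}^\infty_0(\Omega_{|h|})$, the function $\varphi(\cdot-h)$ is compactly supported in $\Omega$. Changing variables $y=x+h$ gives
$$\int F(x+h)D_i\varphi(x)\,dx=\int F(y)D_i\varphi(y-h)\,dy=-\int D_iF(y)\varphi(y-h)\,dy=-\int D_iF(x+h)\varphi(x)\,dx.$$
Hence $D_i(F_h)=(D_iF)_h$ in the weak sense. Because the map $x\mapsto x+h$ preserves measure, we also get $\|F_h\|_{W^{1,p}(\Omega_{|h|})}\le\|F\|_{W^{1,p}(\Omega)}$. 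By linearity, $\tau_hF=F_h-F$ lies in $W^{1,p}(\Omega_{|h|})$ and $D_i\tau_hF=\tau_hD_iF$.

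\emph{Item (ii).} Here I would expand
$$\int_\Omega F\,\tau_hG=\int F(x)G(x+h)\,dx-\int F(x)G(x)\,dx.$$
Substituting $y=x+h$ in the first integral turns it into $\int F(y-h)G(y)\,dy$. Subtracting the second integral then gives $\int G(y)\,[F(y-h)-F(y)]\,dy=\int G\,\tau_{-h}F$. The support hypothesis is what makes these integrals legitimate. If $\operatorname{supp}F\subset\Omega_{|h|}$, then the integrands vanish unless both points $x$ and $x+h$ (respectively $y$ and $y-h$) lie in $\Omega$, so no values of $G$ outside $\Omega$ are needed. The case $\operatorname{supp}G\subset\Omega_{|h|}$ is handled symmetrically. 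We extend $F$ and $G$ by zero outside their supports so that everything is defined on $\mathbb{R}^n$; integrability is implicitly assumed, as in the statement.

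\emph{Item (iii).} This is pure algebra:
$$F(x+h)G(x+h)-F(x)G(x)=F(x+h)\bigl[G(x+h)-G(x)\bigr]+G(x)\bigl[F(x+h)-F(x)\bigr].$$

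None of the three steps poses a real obstacle. The only point requiring care is the domain bookkeeping in (i) and (ii): one must check that the shifted test function, respectively the shifted support, stays inside $\Omega$. This is exactly what the definition of $\Omega_{|h|}$ guarantees.
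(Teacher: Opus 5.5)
Your proof is correct. It is the standard direct verification: translated test functions for (i), the change of variables $y=x+h$ for (ii) with the support hypothesis ensuring that only points of $\Omega$ enter, and pure algebra for (iii). The paper gives no proof of its own and simply refers to Giusti, where the argument is essentially yours, including your correct remark that splitting $\int_\Omega F\,\tau_hG$ into two integrals tacitly requires each piece to be integrable.
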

The next result about the finite difference operator is a kind of integral version of Lagrange Theorem.
\begin{lem}\label{ldiff}
If $0<\rho<R,$ $|h|<\frac{R-\rho}{2},$ $1<p<+\infty$ and $F\in W^{1,p}(B_{R})$, then
\begin{center}
$\displaystyle\int_{B_{\rho}} |\tau_{h}F(x)|^{p} dx \leq c(n,p)|h|^{p} \displaystyle\int_{B_{R}} |DF(x)|^{p} dx$.
\end{center}
Moreover, it holds
\begin{center}
$\displaystyle\int_{B_{\rho}} |F(x+h)|^{p} d x \leq  \displaystyle\int_{B_{R}} |F(x)|^{p}d x$.
\end{center}
\end{lem}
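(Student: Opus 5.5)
The plan is to reduce both estimates to the one–dimensional fundamental theorem of calculus along the segment $t\mapsto x+th$, $t\in[0,1]$, and then pass to Sobolev functions by density.

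Assume first that $F\in C^\infty(\overline{B_R})$. For $x\in B_\rho$ and $|h|<\frac{R-\rho}{2}$, every point $x+th$ with $t\in[0,1]$ lies in $B_{\rho+|h|}\subset B_R$. We write
$$\tau_hF(x)=\int_0^1 \langle DF(x+th),h\rangle\,dt,$$
so that $|\tau_hF(x)|\le |h|\int_0^1|DF(x+th)|\,dt$. Jensen's (or H\"older's) inequality in $t$ gives
$$|\tau_hF(x)|^p\le |h|^p\int_0^1|DF(x+th)|^p\,dt.$$
Integrating over $B_\rho$ and using Fubini–Tonelli to exchange the order of integration, we get
$$\int_{B_\rho}|\tau_hF|^p\,dx\le |h|^p\int_0^1\int_{B_\rho}|DF(x+th)|^p\,dx\,dt.$$
For each $t$, the change of variables $y=x+th$ maps $B_\rho$ onto $B_\rho(th)\subset B_R$, so the inner integral is at most $\int_{B_R}|DF|^p\,dy$. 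This yields the first inequality, in fact with $c=1$; any $c(n,p)\ge1$ is therefore admissible.

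The second inequality is the same change of variables with $t=1$:
$$\int_{B_\rho}|F(x+h)|^p\,dx=\int_{B_\rho(h)}|F(y)|^p\,dy\le\int_{B_R}|F|^p\,dy,$$
because $B_\rho(h)\subset B_{\rho+|h|}\subset B_R$. This holds for any measurable $F$ and needs no smoothness.

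To extend the first inequality to $F\in W^{1,p}(B_R)$, we cannot approximate directly on $B_R$ without regularity of the boundary. Instead we fix $\rho'$ with $\rho+2|h|<\rho'<R$ and mollify, $F_\varepsilon=F*\varphi_\varepsilon$ with $\varepsilon<R-\rho'$. Then $F_\varepsilon\to F$ and $DF_\varepsilon\to DF$ in $L^p(B_{\rho'})$. The smooth argument above, run with $\rho'$ in place of $R$, bounds $\int_{B_\rho}|\tau_hF_\varepsilon|^p$ by $|h|^p\int_{B_{\rho'}}|DF_\varepsilon|^p$; the segment stays in $B_{\rho+|h|}\subset B_{\rho'}$.

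Passing to the limit needs $\tau_hF_\varepsilon\to\tau_hF$ in $L^p(B_\rho)$. This follows from the translation estimate just proved, applied to $F_\varepsilon-F$. We obtain the bound with $\int_{B_{\rho'}}|DF|^p\le\int_{B_R}|DF|^p$, which concludes.

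The only delicate point is this approximation step, namely ensuring that the mollifiers and all translated segments stay inside the ball. The margin $|h|<\frac{R-\rho}{2}$ is exactly what leaves room for this.
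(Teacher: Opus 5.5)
Your proof is correct, and it is essentially the standard argument. The paper gives no proof of this lemma and treats it as one of the elementary difference-quotient facts taken from Giusti's book, where it is proved the same way: write $\tau_hF(x)=\int_0^1\langle DF(x+th),h\rangle\,dt$ for smooth $F$, apply H\"older/Jensen in $t$, use Fubini and a translation of the ball, and then pass to $W^{1,p}$ by density. Your interior mollification on $B_{\rho'}$ handles the density step cleanly, and you even get the constant $c=1$; since balls have smooth boundary, approximating up to $\partial B_R$ would also have worked, so your caveat there is unnecessary but harmless.
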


\section{Besov spaces}
\label{secbesov}
\noindent We give the definition of Besov spaces as done in \cite[Section 2.5.12, Theorem 1]{Triebel}.
\begin{dfn}
 Let $1 \le p < +\infty$ and let $\alpha >0 $ be a positive real number. Denote by $r$ the smallest integer larger than $\alpha$.
\\Let $1 \leq q< +\infty$.  We say that a function $v : \mathbb{R}^n \rightarrow \mathbb{R}$ belongs to the Besov space $B^{\alpha}_{p,q}(\mathbb{R}^{n})$ if, and only if, $v \in L^{p}(\mathbb{R}^{n})$ and
 \begin{equation}
[v]_{B^{\alpha}_{p,q}(\mathbb{R}^{n})}: =  \biggl( \displaystyle\int_{\mathbb{R}^{n}} \biggl( \displaystyle\int_{\mathbb{R}^{n}} \dfrac{|\tau^r_hv(x)|^{p}}{|h|^{\alpha p}} dx \biggr)^{\frac{q}{p}}  \dfrac{dh}{|h|^{n}} \biggr)^{\frac{1}{q}} < + \infty  . \label{norm1}
\end{equation}
Equivalently, we could simply say that $v \in L^{p}(\mathbb{R}^{n})$ and $\frac{\tau^r_{h}{v}}{|h|^{\alpha}} \in L^{q}\bigl( \frac{dh}{|h|^{n}}; L^{p}(\mathbb{R}^{n}) \bigr)$.
\\When $q = + \infty$, the Besov space $B^{\alpha}_{p,\infty}(\mathbb{R}^{n})$ consists of the functions $v \in L^{p}(\mathbb{R}^{n})$ such that
\begin{equation}
[v]_{B^{\alpha}_{p, \infty}(\mathbb{R}^{n})} :=  \displaystyle\sup_{h \in \mathbb{R}^{n}} \biggl( \displaystyle\int_{\mathbb{R}^{n}} \dfrac{|\tau^r_hv(x)|^{p}}{|h|^{\alpha p}} dx \biggr)^{\frac{1}{p}} < +\infty . \label{norm2}
\end{equation}

\noindent Accordingly, for $1 \le q \le + \infty$, the Besov space $B^{\alpha}_{p,q}(\mathbb{R}^{n})$ is normed with
\begin{equation}
\Vert v \Vert_{B^{\alpha}_{p,q}(\mathbb{R}^{n})}: = \Vert v \Vert_{L^{p}(\mathbb{R}^{n})} + [v]_{B^{\alpha}_{p,q}(\mathbb{R}^{n})} . \label{norm}
\end{equation}
\end{dfn}
Observe that, if $1 \le q < + \infty$, integrating for $h \in B(0, \delta)$ for a fixed $\delta >0,$  an equivalent norm is obtained, because
\begin{center}
$\biggl( \displaystyle\int_{\{|h| \geq \delta\}} \biggl( \displaystyle\int_{\mathbb{R}^{n}} \dfrac{|\tau^r_h v(x)|^{p}}{|h|^{\alpha p}} dx \biggr)^{\frac{q}{p}}  \dfrac{dh}{|h|^{n}} \biggr)^{\frac{1}{q}} \leq c(n, \alpha,p,q, \delta) \Vert v \Vert_{L^{p}(\mathbb{R}^{n})} $.
\end{center}
In the case $q = + \infty$, one can simply take the supremum over $|h| \leq \delta$ and obtain an equivalent norm. By construction, one has $B^{\alpha}_{p, q}(\mathbb{R}^{n}) \subset L^{p}(\mathbb{R}^{n})$. One also has the following version of the Sobolev embeddings (a proof can be found at \cite[Proposition 7.12]{haroske}).
\begin{lem}\label{3.1}
Suppose that $0 < \alpha <1$.
\\ (a) If $1 < p < \frac{n}{\alpha}$ and $1 \leq q \leq p^{*}_{\alpha} = \frac{np}{n- \alpha p}$, then there is a continuous embedding $B^{\alpha}_{p, q}(\mathbb{R}^{n}) \subset L^{p^{*}_{\alpha}}(\mathbb{R}^{n})$.
\\ (b) If $p = \frac{n}{\alpha}$ and $1 \leq q \leq + \infty$, then there is a continuous embedding $B^{\alpha}_{p, q}(\mathbb{R}^{n}) \subset BMO(\mathbb{R}^{n})$,
\\ where $BMO$ denotes the space of functions with bounded mean oscillations \emph{\cite[Chapter 2]{giusti}}.
\end{lem}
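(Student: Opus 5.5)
The plan is to prove part (b) first, since it is elementary, and then reduce part (a) to a pointwise Hedberg-type estimate. For both parts I will use the monotonicity of Besov spaces in the third index. Since $0<\alpha<1$ we have $r=1$, so $\tau^r_h=\tau_h$. For $q_1\le q_2$ there is a continuous inclusion $B^\alpha_{p,q_1}(\mathbb{R}^n)\subset B^\alpha_{p,q_2}(\mathbb{R}^n)$. To see this, note that $\|\tau_h v\|_{L^p}$ is quasi-monotone across dyadic shells: $\tau_{2h}v=\tau_h v(\cdot+h)+\tau_h v$, so $\|\tau_{2h}v\|_p\le 2\|\tau_h v\|_p$. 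Hence the $h$-integral is comparable to an $\ell^{q}$ norm over the dyadic blocks $2^{-k-1}\le|h|<2^{-k}$, and the inclusion $\ell^{q_1}\subset\ell^{q_2}$ gives the claim.

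For (b), the inclusion above reduces the claim to $q=\infty$. Fix a ball $B=B_\rho(x_0)$ and put $M:=\sup_h|h|^{-\alpha}\|\tau_h v\|_{L^p}$. By Jensen's inequality,
$$\fint_B|v-v_B|\,dx\le\Bigl(\fint_B\fint_B|v(x)-v(y)|^p\,dy\,dx\Bigr)^{1/p}\le\Bigl(\frac{c}{\rho^{2n}}\int_{B_{2\rho}(0)}\int_{\mathbb{R}^n}|\tau_hv(x)|^p\,dx\,dh\Bigr)^{1/p},$$
where we substituted $y=x+h$ with $|h|<2\rho$. The inner integral is at most $M^p|h|^{\alpha p}\le cM^p\rho^{n}$, because $\alpha p=n$. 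Therefore the right-hand side is bounded by $c\,(\rho^{-2n}\rho^{n}\rho^{n}M^p)^{1/p}=cM$, uniformly in the ball. This gives $[v]_{BMO}\le c[v]_{B^\alpha_{n/\alpha,\infty}}$.

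For (a), the idea is to derive a pointwise estimate by telescoping averages. Set $t_j=2^{-j}t$ and let $v_{t}(x)$ denote the mean of $v$ over $B_t(x)$. At Lebesgue points,
$$|v(x)|\le |v_t(x)|+\sum_{j\ge0}|v_{t_j}(x)-v_{t_{j+1}}(x)|,\qquad |v_t(x)|\le c\,t^{-n/p}\|v\|_{L^p}.$$
For the increments I use
$$|v_{t_j}(x)-v_{t_{j+1}}(x)|\le c\fint_{B_{t_j}(x)}\fint_{B_{t_j}(0)}|\tau_h v(y)|\,dh\,dy\le c\,t_j^{\alpha}\,\mathcal{M}G_j(x),$$
where $G_j(y):=\fint_{B_{t_j}(0)}|\tau_hv(y)|\,dh\,/t_j^{\alpha}$ and $\mathcal M$ is the Hardy–Littlewood maximal operator. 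Summing over $j$ with $p$-Hölder in $j$ gives $\sum_j t_j^\alpha\mathcal MG_j\le c\,t^\alpha\,\|(\mathcal M G_j)_j\|_{\ell^p}$. The Fefferman–Stein vector-valued maximal inequality, together with Minkowski's inequality in $h$, then yields $\bigl\|\|(\mathcal MG_j)_j\|_{\ell^p}\bigr\|_{L^p}\le c[v]_{B^\alpha_{p,p}}$.

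Optimizing in $t$ (Hedberg's trick) gives the pointwise bound
$$|v(x)|\le c\,\|v\|_p^{\alpha p/n}\,\bigl(S(x)\bigr)^{1-\alpha p/n},\qquad S:=\|(\mathcal MG_j)_j\|_{\ell^p}\in L^p,$$
so that $\|v\|_{L^{p^*_\alpha}}\le c\|v\|_{B^\alpha_{p,p}}$. This already handles every $q\le p$.

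The main obstacle is the range $p<q\le p^*_\alpha$, where the Hedberg argument above is too lossy. My first attempt would be to run the same argument with $\ell^q$ in place of $\ell^p$ in the $j$-sum and to aim for the Lorentz-space bound $v\in L^{p^*_\alpha,q}$. The strategy there is to split $v$ at each level set into a smooth part $v_{t(s)}$ and an oscillating part, choosing $t(s)$ depending on the height $s$, and then use $L^{p^*_\alpha,q}\subset L^{p^*_\alpha}$ for $q\le p^*_\alpha$. If that becomes too technical, the fallback is the Littlewood–Paley characterization $\|v\|_{B^\alpha_{p,q}}\simeq\|2^{j\alpha}\|\Delta_jv\|_p\|_{\ell^q}$. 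Combining Bernstein's inequality $\|\Delta_jv\|_{p^*}\le c2^{j\alpha}\|\Delta_j v\|_p$ with Jawerth's embedding $B^\alpha_{p,q}\subset B^0_{p^*,q}\subset L^{p^*}$, valid for $q\le p^*$, should then give the result.
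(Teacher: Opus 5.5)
The paper gives no proof of this lemma; it only quotes \cite[Proposition 7.12]{haroske}, so your argument has to stand on its own. Part (b) does. The Jensen/shift computation is right, and the reduction to $q=\infty$ is Lemma \ref{3.2}(a). One caveat: $\Vert\tau_{2h}v\Vert_p\le2\Vert\tau_hv\Vert_p$ alone only compares $h$ with $2h$ on one ray. To pass to dyadic blocks you must average $\Vert\tau_hv\Vert_p\le\Vert\tau_kv\Vert_p+\Vert\tau_{h-k}v\Vert_p$ over $k$ near $h/2$.

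Part (a), however, breaks at the Hedberg step. Your two bounds give $|v(x)|\le c\,t^{-n/p}\Vert v\Vert_p+c\,t^{\alpha}S(x)$. Balancing them yields $|v|\le c\Vert v\Vert_p^{\alpha p/(n+\alpha p)}S^{n/(n+\alpha p)}$, i.e.\ only $v\in L^{p+\alpha p^2/n}$, which is strictly below $p^*_\alpha$. The bound you state cannot hold. It would integrate to $\Vert v\Vert_{L^{p^*_\alpha}}\le c\Vert v\Vert_{L^p}^{\alpha p/n}[v]_{B^\alpha_{p,p}}^{1-\alpha p/n}$, and under $v\mapsto v(\lambda\,\cdot)$ the ratio of left to right side grows like $\lambda^{\alpha^2p/n}$ as $\lambda\to\infty$. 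The underlying problem is that the coarse scales are controlled by $\Vert v\Vert_p$, which has the wrong homogeneity; tellingly, the hypothesis $p<n/\alpha$ is never used.

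The repair is as follows. Telescope through \emph{all} dyadic scales; your bound $|v_s(x)|\le cs^{-n/p}\Vert v\Vert_p$ gives $v_s(x)\to0$ as $s\to\infty$. Keep the maximal-function bound for $t_j\le t$, and for $t_j>t$ use H\"older instead:
\[
|v_{t_j}(x)-v_{t_{j+1}}(x)|\le c\,t_j^{-n/p}\Bigl(\fint_{B_{2t_j}(0)}\Vert\tau_hv\Vert_{L^p}^p\,dh\Bigr)^{1/p}\le c\,t_j^{\alpha-n/p}[v]_{B^\alpha_{p,p}},
\]
which is summable precisely because $\alpha p<n$. Then $|v(x)|\le c\,t^\alpha S(x)+c\,t^{\alpha-n/p}[v]_{B^\alpha_{p,p}}$, and Hedberg's choice of $t$ gives $|v|\le c[v]_{B^\alpha_{p,p}}^{\alpha p/n}S^{1-\alpha p/n}$. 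Since $(1-\alpha p/n)p^*_\alpha=p$, you get $\Vert v\Vert_{L^{p^*_\alpha}}\le c[v]_{B^\alpha_{p,p}}$. You should also take $t$ dyadic and index the $G_i$ by all $i\in\mathbb{Z}$, averaging over $h\in B_{2^{1-i}}(0)$ rather than $B_{t_j}(0)$, so that $S$ does not depend on the $t$ being optimized.

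Second, the range $p<q\le p^*_\alpha$ is left unproved, and it is not a side case. In the proof of Theorem \ref{apriori thm}, Corollary \ref{corollary} is applied with integrability $\delta+1$ and third index $p(\delta+1)/2$, which exceeds $\delta+1$ whenever $p>2$. The Lorentz route ($B^\alpha_{p,q}\subset L^{p^*_\alpha,q}\subset L^{p^*_\alpha}$, e.g.\ by real interpolation) targets the right statement but is only described, not carried out.

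Your fallback is false at its second link. Bernstein does give $B^\alpha_{p,q}\subset B^0_{p^*_\alpha,q}$, but $B^0_{r,q}\subset L^r$ holds only for $q\le\min\{r,2\}$. For $q>2$ it fails on lacunary sums $\sum_j a_j\phi(x)e^{i2^jx_1}$ with $\widehat\phi$ supported near $0$, where the two norms behave like $\Vert a\Vert_{\ell^q}$ and $\Vert a\Vert_{\ell^2}$ respectively. Jawerth's embedding is the Besov-into-Triebel--Lizorkin statement $B^\alpha_{p,p^*_\alpha}\subset F^0_{p^*_\alpha,2}=L^{p^*_\alpha}$. Together with $B^\alpha_{p,q}\subset B^\alpha_{p,p^*_\alpha}$ for $q\le p^*_\alpha$, that is what closes the full range.
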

We recall the following inclusions between Besov spaces (\cite[Proposition 7.10 and Formula (7.35)]{haroske}).
\begin{lem}\label{3.2}
Suppose that $0 < \beta < \alpha < 1 $.
\\ (a) If $1 < p < +\infty$ and $1 \leq q \leq t \leq + \infty$, then $B^{\alpha}_{p, q}(\mathbb{R}^{n}) \subset B^{\alpha}_{p, t}(\mathbb{R}^{n})$.
\\ (b) If $1 < p < +\infty$ and $1 \leq q , t \leq + \infty$, then $B^{\alpha}_{p, q}(\mathbb{R}^{n}) \subset B^{\beta}_{p, t}(\mathbb{R}^{n})$.
\\ (c) If $1 \le q \le + \infty$, then $B^{\alpha}_{\frac{n}{\alpha}, q}(\mathbb{R}^{n}) \subset B^{\beta}_{\frac{n}{\beta}, q}(\mathbb{R}^{n})$.
\end{lem}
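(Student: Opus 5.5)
Since $0<\beta<\alpha<1$, only first order differences enter the seminorms (here $r=1$), and I will use the notation $\phi_{\alpha,p}(h):=|h|^{-\alpha}\|\tau_h v\|_{L^p(\mathbb{R}^n)}$. Then $[v]_{B^\alpha_{p,q}}=\|\phi_{\alpha,p}\|_{L^q(dh/|h|^n)}$. For (a) and (b) I would argue directly with differences; for (c) I would pass to the Littlewood--Paley characterization of Besov spaces, which is equivalent to the difference norm \eqref{norm1} by \cite[Section 2.5.12]{Triebel}.

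\emph{(a)} The key step is a pointwise bound on $\phi_{\alpha,p}$ in terms of its $L^q(dh/|h|^n)$ norm. For any $y$ we have $\tau_h v(x)=\tau_y v(x)+\tau_{h-y}v(x+y)$, so translation invariance of the $L^p$ norm gives $\|\tau_h v\|_p\le \|\tau_y v\|_p+\|\tau_{h-y}v\|_p$. I would average this in $y$ over the ball $B(h/2,|h|/4)$. On that ball both $|y|$ and $|h-y|$ are comparable to $|h|$, so
$$\phi_{\alpha,p}(h)\le c\fint_{\{|h|/4\le |z|\le 3|h|/4\}}\phi_{\alpha,p}(z)\,dz\le c\biggl(\int_{\mathbb{R}^n}\phi_{\alpha,p}(z)^q\dfrac{dz}{|z|^n}\biggr)^{1/q},$$
where the last step is H\"older's inequality together with $|z|^{-n}\sim |h|^{-n}$ on the annulus. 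This proves the case $t=\infty$. For $q\le t<\infty$ I would then interpolate:
$$\int\phi^t\,\dfrac{dh}{|h|^n}\le \|\phi\|_\infty^{t-q}\int\phi^q\,\dfrac{dh}{|h|^n}.$$
The $L^p$ part of the norm is untouched.

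\emph{(b)} I would split the $h$-integral into $|h|\ge 1$ and $|h|<1$. On $|h|\ge1$ the observation after \eqref{norm} bounds the contribution by $c\|v\|_{L^p}$. On $|h|<1$ I write $\phi_{\beta,p}(h)=|h|^{\alpha-\beta}\phi_{\alpha,p}(h)$ and bound $\phi_{\alpha,p}$ by $\sup\phi_{\alpha,p}\le c[v]_{B^\alpha_{p,q}}$, using (a) with $t=\infty$. It then remains to note that
$$\int_{|h|<1}|h|^{(\alpha-\beta)t-n}\,dh<\infty,$$
since $\alpha>\beta$. For $t=\infty$ the same bound applies directly.

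\emph{(c)} This is the genuinely Sobolev-type statement, and I expect it to be the main obstacle. Here integrability rises from $n/\alpha$ to $n/\beta$ while smoothness drops by exactly $\alpha-\beta=n\bigl(\tfrac{\alpha}{n}-\tfrac{\beta}{n}\bigr)$, so no elementary difference argument seems available. Let $v=S_0v+\sum_{j\ge1}\Delta_j v$ be a Littlewood--Paley decomposition. Since $\widehat{\Delta_j v}$ is supported in $|\xi|\sim 2^j$, Bernstein's inequality gives
$$\|\Delta_j v\|_{L^{n/\beta}}\le c\,2^{jn(\frac{\alpha}{n}-\frac{\beta}{n})}\|\Delta_j v\|_{L^{n/\alpha}},$$
that is, $2^{j\beta}\|\Delta_j v\|_{n/\beta}\le c\,2^{j\alpha}\|\Delta_j v\|_{n/\alpha}$. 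The low-frequency part satisfies $\|S_0v\|_{n/\beta}\le c\|S_0v\|_{n/\alpha}$ in the same way. Taking $\ell^q$ norms in $j$ yields $\|v\|_{B^\beta_{n/\beta,q}}\le c\|v\|_{B^\alpha_{n/\alpha,q}}$ in the Fourier-analytic norm. The delicate point is the equivalence of that norm with \eqref{norm}, including the $L^p$ part. For this I would rely on \cite{Triebel}, as the paper does for the definition, rather than reprove it.
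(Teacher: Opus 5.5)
Your proof is correct, but it is not the paper's route. The paper gives no argument for this lemma and simply quotes \cite[Proposition 7.10 and Formula (7.35)]{haroske}.

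For (a) and (b) you replace the citation with an elementary argument that uses only the first-order difference seminorm. Averaging $\|\tau_h v\|_{L^p}\le\|\tau_y v\|_{L^p}+\|\tau_{h-y}v\|_{L^p}$ over $y\in B(h/2,|h|/4)$ and applying H\"older on the annulus gives the pointwise bound $\sup_h |h|^{-\alpha}\|\tau_h v\|_{L^p}\le c\,[v]_{B^\alpha_{p,q}}$, with $c$ depending only on $n$. From there, (a) follows by interpolation. Part (b) follows by splitting into $|h|\ge 1$ and $|h|<1$ and using $\int_{|h|<1}|h|^{(\alpha-\beta)t-n}\,dh<\infty$. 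This buys explicit constants, and it shows that the hypothesis $p>1$ is not actually needed in (a) and (b).

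For (c) you use the Littlewood--Paley description together with Bernstein's inequality. This is the standard proof of the Sobolev-type embedding $B^{s_0}_{p_0,q}\subset B^{s_1}_{p_1,q}$ when $s_0-n/p_0=s_1-n/p_1$ and $p_0<p_1$. It is complete once you accept the equivalence of the Fourier-analytic norm with \eqref{norm} from \cite[Section 2.5.12]{Triebel}. That equivalence applies here because the smoothness is positive and $1<n/\alpha<n/\beta<\infty$, and it controls the $L^{n/\beta}$ part of the norm as well. So in (c) you still rely on a black box of roughly the same depth as citing the embedding directly, which is what the paper does for all three parts.
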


Differences in \eqref{norm1} and \eqref{norm2} can be replaced by derivatives (see \cite[Remark 2, page 113]{Triebel}). Let $1\leq p,q<+\infty$ and
$$\alpha=\sigma+m, \quad 0 < \sigma < 1 \quad \text{and} \quad m \in \mathbb{N}_0. $$
Then, the norm
\begin{equation}
    \sum_{|k|\le m} \Vert D^k v \Vert_{L^p(\mathbb{R}^n)} + \sum_{|k|\le m} \biggl( \displaystyle\int_{\mathbb{R}^{n}} \biggl( \displaystyle\int_{\mathbb{R}^{n}} \dfrac{|\tau_hD^k v(x)|^{p}}{|h|^{\sigma p}} dx \biggr)^{\frac{q}{p}}  \dfrac{dh}{|h|^{n}} \biggr)^{\frac{1}{q}} \label{equivnorm}
\end{equation}
is equivalent to \eqref{norm} (with the usual modification if $q=+\infty$).

Given a domain $\Omega \subset \mathbb{R}^{n}$, we say that a function $v: \mathbb{R}^n \rightarrow \mathbb{R}$ belongs to the local Besov space $ B^{\alpha}_{p, q,\text{loc}}$ if $\varphi  v \in B^{\alpha}_{p, q}(\mathbb{R}^{n})$ whenever $\varphi \in \mathcal{C}^{\infty}_{0}(\Omega)$. It is worth noticing that one can prove suitable version of Lemmas \ref{3.1} and \ref{3.2}, by using local Besov spaces.

 The following lemma provides a characterization of local Besov spaces $B^{\alpha}_{p,q,\text{loc}}$, which will be fundamental for our subsequent analysis (for the proof see \cite[Lemma 7]{Baison}, in the case $0< \alpha <1$, and \cite[Lemma 2.9]{Grimaldi}, in the case $1 \le \alpha <2$). 
\begin{lem}
Let $1 \le p < + \infty$, $1 \le q \le + \infty$ and $0< \alpha < 2$.
A function $v \in L^{p}_{\text{loc}}(\Omega)$ belongs to the local Besov space $B^{\alpha}_{p,q,\text{loc}}$, if, and only if,
\begin{center}
$\biggl\Vert \dfrac{\tau^r_{h}v}{|h|^{\alpha}} \biggr\Vert_{L^{q}\bigl(\frac{dh}{|h|^{n}};L^{p}(B)\bigr)}< + \infty,$
\end{center}
for any ball $B\subset2B\subset\Omega$ with radius $r_{B}$. Here the measure $\frac{dh}{|h|^n}$ is restricted to the ball $B(0,r_B)$ on the h-space.
\end{lem}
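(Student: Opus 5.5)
The statement just above is the characterization lemma for local Besov spaces, with $0<\alpha<2$ and $r$ the smallest integer larger than $\alpha$. I would prove the two implications separately, localizing with cut-off functions and comparing with the global seminorm \eqref{norm1}. By the remark after the definition, restricting $h$ to a ball $B(0,\delta)$ gives an equivalent norm. The case $0<\alpha<1$ ($r=1$) and the case $1\le\alpha<2$ ($r=2$) can be handled together, since the only ingredient is a Leibniz-type formula for $\tau_h^r$.

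\textbf{Necessity.} Assume $v\in B^\alpha_{p,q,\mathrm{loc}}$ and take $B\subset 2B\subset\Omega$. I choose $\varphi\in\mathcal C^\infty_0(2B)$ with $\varphi\equiv1$ on $\frac32 B$. For $x\in B$ and $|h|<r_B/4$, all points $x+jh$ with $j\le r$ lie in $\frac32B$. Hence $\tau_h^r v=\tau_h^r(\varphi v)$ on $B$, and
\[
\Bigl\Vert \tfrac{\tau^r_h v}{|h|^\alpha}\Bigr\Vert_{L^q(\frac{dh}{|h|^n};L^p(B))}
\]
restricted to $|h|<r_B/4$ is bounded by $[\varphi v]_{B^\alpha_{p,q}(\mathbb R^n)}$. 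For $r_B/4\le|h|<r_B$, the weight $|h|^{-\alpha q-n}$ is bounded, and $\Vert\tau_h^r v\Vert_{L^p(B)}\le 2^r\Vert v\Vert_{L^p(2B)}$ because $B+jh\subset 2B$ when $j\le 2$ and $|h|<r_B/2$. For the remaining range with $r=2$, one should take $|h|<r_B/2$, or note that constants do not matter. So this direction is elementary.

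\textbf{Sufficiency.} Fix $\varphi\in\mathcal C^\infty_0(\Omega)$ and cover $\mathrm{supp}\,\varphi$ by finitely many balls $B_i$ with $4B_i\subset\Omega$. Using a partition of unity, it suffices to show $\psi v\in B^\alpha_{p,q}(\mathbb R^n)$ for $\psi\in\mathcal C^\infty_0(B_i)$. Using only $|h|<\delta$ with $\delta\ll r_{B_i}$, I expand by the discrete Leibniz rule of Proposition \ref{rapportoincrementale}(iii). For $r=1$,
\[
\tau_h(\psi v)=\psi(\cdot+h)\,\tau_h v+v\,\tau_h\psi .
\]
For $r=2$,
\[
\tau^2_h(\psi v)=\psi(\cdot+2h)\,\tau_h^2v+2\,\tau_h\psi(\cdot+h)\,\tau_h v+v\,\tau_h^2\psi .
\]
The terms carrying $\tau_h^r v$ are controlled by the hypothesis on $B_i$. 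The terms $v\,\tau_h^r\psi$ satisfy $|\tau_h^r\psi|\le c|h|^r$, and $|h|^{r-\alpha}$ is integrable against $dh/|h|^n$ near $0$ because $r>\alpha$. So they contribute $c\Vert v\Vert_{L^p(2B_i)}$.

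\textbf{Main obstacle.} The hard term is the mixed one $\tau_h\psi\,\tau_h v$ when $1\le\alpha<2$. It gives $|h|^{1-\alpha}\Vert\tau_h v\Vert_{L^p}$, and for $\alpha\ge1$ this requires knowing a first-order difference bound from the second-order hypothesis. I would handle it with the Marchaud-type identity
\[
\tau_h v=\tfrac12\tau_{2h}v-\tfrac12\tau_h^2 v
\]
iterated dyadically, $\tau_h v=2^{-k}\tau_{2^kh}v-\sum_{j<k}2^{-j-1}\tau^2_{2^jh}v$. This gives the Marchaud inequality
\[
\Vert\tau_h v\Vert_{L^p}\lesssim |h|\Bigl(\Vert v\Vert_{L^p}+\int_{|h|}^{\delta}\frac{\omega_2(t)}{t^{2}}\,dt\Bigr),
\]
where $\omega_2(t)=\sup_{|k|\le t}\Vert\tau^2_k v\Vert_{L^p}$. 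The shrinking domains in each step are absorbed by shrinking the ball, which is possible since $4B_i\subset\Omega$. A Hardy inequality in $t$ then bounds the resulting $L^q(dh/|h|^n)$ norm by the second-order quantity, using $\alpha<2$. For $\alpha=1$, some care with logarithms is needed, but $|h|^{1-\alpha}=1$ and the mixed term is still controlled via Hardy.

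Finally, the standard equivalence between the full $h$-integral and its truncation to $|h|<\delta$, together with the passage from $\sup_{|k|\le t}$ to averages, closes the argument. Both are routine.
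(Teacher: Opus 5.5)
The paper does not prove this lemma; it quotes \cite[Lemma 7]{Baison} for $0<\alpha<1$ and \cite[Lemma 2.9]{Grimaldi} for $1\le\alpha<2$. Your localization through the discrete Leibniz rule is the standard argument, and it is the same computation the paper carries out in Proposition \ref{prop1} via \eqref{tau2}. The real difference is how you treat the mixed term $2\tau_h\psi(\cdot+h)\,\tau_h v$ when $r=2$. In Proposition \ref{prop1} the paper bounds it by Lemma \ref{ldiff}, $\Vert\tau_h v\Vert_{L^p}\le c|h|\Vert Dv\Vert_{L^p}$, which presupposes $Dv\in L^p$. The lemma only assumes $v\in L^p_{\mathrm{loc}}(\Omega)$, so a Marchaud-type argument like yours is what the statement actually needs. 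It works for every $\alpha<2$ because $|\tau_h\psi|\le c|h|$ makes this term of lower order, so nothing special happens at $\alpha=1$.

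The step you call routine is not routine for second differences. This is the passage from $\omega_2(t)=\sup_{|k|\le t}\Vert\tau^2_k v\Vert_{L^p}$ back to the $h$-integral of the hypothesis. For first differences, $\tau_k v(x)=\tau_{k-\eta}v(x+\eta)+\tau_\eta v(x)$ does it at once. For second differences you need an identity such as
\begin{equation*}
\tau^2_k v(x)=-2\tau^2_{\eta}v(x+k)+2\tau^2_{k+\eta}v(x)+\tau^2_{2\eta}v(x+2k)-\tau^2_{k+2\eta}v(x),
\end{equation*}
averaged over $|\eta|<|k|$, followed by a second Hardy inequality.

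It is simpler never to introduce the supremum. For $|h|<\delta/2$ choose $m$ with $\delta/2\le 2^m|h|<\delta$ and apply your dyadic identity pointwise. Set $F(k)=|k|^{-\alpha}\Vert\tau^2_k v\Vert_{L^p(B')}$, with $B'=2B_i$ and $B''$ its $\delta$-neighbourhood. Since $|h|\,2^{j(\alpha-1)}=2^j|h|\,2^{-j(2-\alpha)}<\delta\,2^{-j(2-\alpha)}$, you get
\begin{equation*}
|h|^{1-\alpha}\Vert\tau_h v\Vert_{L^p(B')}\le\frac{4|h|^{2-\alpha}}{\delta}\Vert v\Vert_{L^p(B'')}+\frac{\delta}{2}\sum_{j\ge0}2^{-j(2-\alpha)}F(2^jh)\,\chi_{\{2^j|h|<\delta\}}.
\end{equation*}
Minkowski's inequality in $L^q(\frac{dh}{|h|^n})$ and the dilation invariance of $\frac{dh}{|h|^n}$ then bound the norm of the sum by $(1-2^{\alpha-2})^{-1}\Vert F\Vert_{L^q(\frac{dh}{|h|^n};\,|h|<\delta)}$. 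No Hardy inequality and no modulus of smoothness are needed.

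In the necessity part, ``note that constants do not matter'' does not repair anything. For $r=2$ and $r_B/2\le|h|<r_B$, $\tau^2_h v$ on $B$ involves $v$ on $3B$, which need not lie in $\Omega$. Already for $r=1$, the bound by $\Vert v\Vert_{L^p(2B)}$ needs $\overline{2B}\subset\Omega$. A function smooth in $\Omega$ that blows up fast at a point of $\partial\Omega\cap\partial(2B)$ makes the integral over $r_B/2<|h|<r_B$ diverge. So the ``only if'' direction must be read with $2B\Subset\Omega$ and, for $r=2$, with $h$ restricted to $B(0,r_B/2)$ (or with $3B\Subset\Omega$). You should state this explicitly rather than hedge.
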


Now, we prove the following local version of the Sobolev embedding stated at Lemma \ref{3.1} .
\begin{prop}\label{lemma embedding}
Suppose that $0 < \alpha < 1$, $1 < p < \frac{n}{\alpha}$ and $1 \le q \le p^*_\alpha$. Then, for every function $v \in B^\alpha_{p,q,\text{loc}}(\Omega)$ the following local estimate
\begin{align}
    \Vert v \Vert_{L^\frac{np}{n-\alpha p}(B_\rho)} &\le  c\Vert  v \Vert_{L^p \left(B_\frac{R+\rho}{2} \right)}+ \dfrac{c}{(R-\rho)^\alpha} \Vert v \Vert_{L^p \left(  B_\frac{3R+\rho}{4}   \right)}\notag\\
    & \qquad \qquad + c\biggl( \displaystyle\int_{ |h| \le \frac{R-\rho}{4} } \biggl( \displaystyle\int_{  B_\frac{3R+\rho}{4}   } \dfrac{|\tau_hv(x)|^{p}}{|h|^{\alpha p}} dx \biggr)^{\frac{q}{p}}  \dfrac{dh}{|h|^{n}} \biggr)^{\frac{1}{q}} \label{embedding}
\end{align}
holds for every balls $B_\rho \subset B_{R} \Subset \Omega$ and for a positive constant $c=c(n,p,q, \alpha)$.
\end{prop}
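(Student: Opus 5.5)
The proof proceeds by localization. We multiply $v$ by a cut-off, apply the global embedding of Lemma \ref{3.1}(a) to the product, and then estimate the Besov seminorm of the product by local quantities of $v$. Fix a cut-off $\psi \in \mathcal{C}^\infty_0(B_{\frac{R+\rho}{2}})$ with $0\le \psi \le 1$, $\psi \equiv 1$ on $B_\rho$, and $|D\psi| \le \frac{c}{R-\rho}$. Since $\psi v \in B^\alpha_{p,q}(\mathbb{R}^n)$ by the definition of local Besov spaces, Lemma \ref{3.1}(a) gives
\begin{equation*}
\Vert v\Vert_{L^{p^*_\alpha}(B_\rho)} \le \Vert \psi v\Vert_{L^{p^*_\alpha}(\mathbb{R}^n)} \le c \Vert \psi v\Vert_{L^p(\mathbb{R}^n)} + c[\psi v]_{B^\alpha_{p,q}(\mathbb{R}^n)}.
\end{equation*}
The first term on the right is bounded by $c\Vert v\Vert_{L^p(B_{\frac{R+\rho}{2}})}$. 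For the seminorm we use the equivalent norm obtained by restricting to $|h|\le \delta$ with $\delta=\frac{R-\rho}{4}$; the part $|h|\ge\delta$ is controlled by $c(\delta)\Vert\psi v\Vert_{L^p}$. This part needs some care: if the constant is tracked by scaling, the contribution of $|h|\ge\delta$ is $\le c\,\delta^{-\alpha}\Vert \psi v\Vert_{L^p}$, since $\int_{|h|\ge\delta}|h|^{-\alpha q-n}\,dh = c\,\delta^{-\alpha q}$ together with $\Vert\tau_h(\psi v)\Vert_{L^p}\le 2\Vert\psi v\Vert_{L^p}$. This yields the term $\frac{c}{(R-\rho)^\alpha}\Vert v\Vert_{L^p}$ with a constant depending only on $n,p,q,\alpha$.

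For $|h|\le\delta$, Proposition \ref{rapportoincrementale}(iii) gives
\begin{equation*}
\tau_h(\psi v)(x) = \psi(x+h)\tau_h v(x) + v(x)\tau_h\psi(x).
\end{equation*}
Both terms vanish unless $x$ or $x+h$ lies in $B_{\frac{R+\rho}{2}}$, so they are supported in $B_{\frac{R+\rho}{2}+\delta} = B_{\frac{3R+\rho}{4}}$. The first term is bounded by $|\tau_h v|\chi_{B_{\frac{3R+\rho}{4}}}$, and it produces exactly the last term of \eqref{embedding}. For the second term we use $|\tau_h\psi|\le \min\{1, \frac{c|h|}{R-\rho}\}$. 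Its contribution is then at most
\begin{equation*}
\Vert v\Vert_{L^p(B_{\frac{3R+\rho}{4}})}\left(\int_{|h|\le\delta}\min\Big\{1,\frac{c|h|}{R-\rho}\Big\}^q |h|^{-\alpha q-n}\,dh\right)^{1/q}.
\end{equation*}
The integral is bounded by $\frac{c}{(R-\rho)^q}\int_{|h|\le\delta}|h|^{(1-\alpha)q-n}\,dh$, which converges because $\alpha<1$ and equals $c\,(R-\rho)^{-q}\delta^{(1-\alpha)q}\sim c\,(R-\rho)^{-\alpha q}$. This gives the middle term $\frac{c}{(R-\rho)^\alpha}\Vert v\Vert_{L^p(B_{\frac{3R+\rho}{4}})}$. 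We combine the pieces with Minkowski's inequality in $L^q(dh/|h|^n; L^p)$.

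The only delicate point is to make sure the constant in the global embedding and in the tail $|h|\ge\delta$ does not depend on $R,\rho$ in an uncontrolled way. We handle this by computing the tail explicitly as above, rather than quoting the $c(n,\alpha,p,q,\delta)$ bound. Everything else is routine.
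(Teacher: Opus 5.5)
Your proposal is correct and follows the paper's proof: a cut-off supported in $B_{\frac{R+\rho}{2}}$, Lemma \ref{3.1}(a) applied to the product, and, for $|h|\le\frac{R-\rho}{4}$, the product rule of Proposition \ref{rapportoincrementale}(iii). There the local Besov term comes from $\psi(x+h)\tau_h v$, and the $(R-\rho)^{-\alpha}$ term comes from $|\tau_h\psi|\le c|h|/(R-\rho)$ together with $\alpha<1$.

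The only difference is the range $|h|>\frac{R-\rho}{4}$, where you bound $\Vert\tau_h(\psi v)\Vert_{L^p}\le 2\Vert\psi v\Vert_{L^p}$ directly instead of splitting via the product rule into the paper's $A_2$ and $A_4$. Your version is the cleaner one: each of those split pieces contains $\eta(x+h)v(x)$ with $x$ possibly far from $B_R$, and these contributions cancel only in the sum. So the paper's restriction of their $x$-integration to $B_{\frac{3R+\rho}{4}}$ is not literally justified, whereas your bound uses only the support of $\psi$.
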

\proof
Let us consider radii $0 < \rho < R $ and  balls $B_\rho \subset B_{R} \Subset \Omega$ and a cut-off function $\eta \in \mathcal{C}_0^\infty(B_\frac{R+\rho}{2})$ such that $0 \le \eta \le 1$, $\eta =1$ on $B_\rho$, $|D\eta| \le \frac{C}{R-\rho}$. By virtue of Lemma \ref{3.1} $(a)$, we have
\begin{equation}
    \Vert v \Vert_{L^\frac{np}{n-\alpha p}(B_\rho)} \le  \Vert \eta v \Vert_{L^\frac{np}{n-\alpha p}(\mathbb{R}^n)} \le c \Vert \eta v \Vert_{B^\alpha_{p,q}(\mathbb{R}^n)},
    \label{stima 1}
\end{equation}
where $c$ is a positive constant depending on  $n$,  $p$, $q$ and $\alpha$.

We take care of the norm $\Vert \eta v \Vert_{B^\alpha_{p,q}(\mathbb{R}^n)}$ in \eqref{stima 1}. It holds
\begin{align}
   \Vert \eta v \Vert_{B^\alpha_{p,q}(\mathbb{R}^n)} &= \Vert \eta v \Vert_{L^p(\mathbb{R}^n)}+[ \eta v ]_{B^\alpha_{p,q}(\mathbb{R}^n)} \notag\\
   & \le \Vert  v \Vert_{L^p \left(B_\frac{R+\rho}{2} \right)} +   \biggl( \displaystyle\int_{ |h| \le \frac{R-\rho}{4} } \biggl( \displaystyle\int_{\mathbb{R}^{n}} \dfrac{\eta(x+h)^p|\tau_hv(x)|^{p}}{|h|^{\alpha p}} dx \biggr)^{\frac{q}{p}}  \dfrac{dh}{|h|^{n}} \biggr)^{\frac{1}{q}} \notag \\
   & \qquad \qquad  +   \biggl( \displaystyle\int_{  |h| > \frac{R-\rho}{4} } \biggl( \displaystyle\int_{\mathbb{R}^{n}} \dfrac{\eta(x+h)^p|\tau_hv(x)|^{p}}{|h|^{\alpha p}} dx \biggr)^{\frac{q}{p}}  \dfrac{dh}{|h|^{n}} \biggr)^{\frac{1}{q}} \notag\\
   & \qquad \qquad  +   \biggl( \displaystyle\int_{ |h| \le \frac{R-\rho}{4} } \biggl( \displaystyle\int_{\mathbb{R}^{n}} \dfrac{|\tau_h\eta|^p|v(x)|^{p}}{|h|^{\alpha p}} dx \biggr)^{\frac{q}{p}}  \dfrac{dh}{|h|^{n}} \biggr)^{\frac{1}{q}} \notag\\
   & \qquad \qquad  +   \biggl( \displaystyle\int_{ |h| > \frac{R-\rho}{4} } \biggl( \displaystyle\int_{\mathbb{R}^{n}} \dfrac{|\tau_h\eta|^p|v(x)|^{p}}{|h|^{\alpha p}} dx \biggr)^{\frac{q}{p}}  \dfrac{dh}{|h|^{n}} \biggr)^{\frac{1}{q}}\notag\\
   & \le \Vert  v \Vert_{L^p    \left(B_\frac{R+\rho}{2} \right)} +A_1+A_2+A_3+A_4, \label{stima2}
\end{align}
 where we used $(iii)$ of Proposition \ref{rapportoincrementale}. 
\\Using that  $\eta \in \mathcal{C}_0^\infty(B_\frac{R+\rho}{2})$ is such that $0 \le \eta \le 1$  and the fact that $ \alpha >0 $, we get the following estimates
\begin{align}
    A_1 \le \biggl( \displaystyle\int_{ |h| \le \frac{R-\rho}{4} } \biggl( \displaystyle\int_{ B_\frac{3R+\rho}{4} } \dfrac{|\tau_hv(x)|^{p}}{|h|^{\alpha p}} dx \biggr)^{\frac{q}{p}}  \dfrac{dh}{|h|^{n}} \biggr)^{\frac{1}{q}}, \label{A}
\end{align}
\begin{align}
    A_2 \le 2 \Vert v \Vert_{L^p \left( B_\frac{3R+\rho}{4}  \right)} \biggl( \underbrace{\displaystyle\int_{  |h| > \frac{R-\rho}{4} } \dfrac{dh}{|h|^{n+\alpha q}}}_{I} \biggr)^{\frac{1}{q}}= \dfrac{c(n,q,\alpha)}{(R-\rho)^\alpha}\Vert v \Vert_{L^p \left( B_\frac{3R+\rho}{4}  \right)}, \label{B}
\end{align}
\begin{align}
    A_4 &  \le \biggl( \displaystyle\int_{|h| > \frac{R-\rho}{4}} \biggl( \displaystyle\int_{B_\frac{3R+\rho}{4}} \dfrac{(\eta(x+h)+\eta(x))^p|v(x)|^{p}}{|h|^{\alpha p}} dx \biggr)^{\frac{q}{p}}  \dfrac{dh}{|h|^{n}} \biggr)^{\frac{1}{q}} \notag\\
    & \le \biggl( \displaystyle\int_{ |h| > \frac{R-\rho}{4} } \biggl( \displaystyle\int_{ B_\frac{3R+\rho}{4}} \dfrac{2^p|v(x)|^{p}}{|h|^{\alpha p}} dx \biggr)^{\frac{q}{p}}  \dfrac{dh}{|h|^{n}} \biggr)^{\frac{1}{q}} \notag\\
    & \le 2  \Vert v \Vert_{L^p \left(  B_\frac{3R+\rho}{4}  \right)}    \biggl(\underbrace{ \displaystyle\int_{ |h| > \frac{R-\rho}{4} }   \dfrac{dh}{|h|^{n+\alpha q}}}_{II} \biggr)^{\frac{1}{q}}  = \dfrac{c(n,q,\alpha)}{(R-\rho)^\alpha} \Vert v \Vert_{L^p \left(  B_\frac{3R+\rho}{4}  \right)}. \label{D}
\end{align}
 The integral $A_3$ can be estimated using the property $|D\eta| \le \frac{C}{R-\rho}$ and the fact that $\alpha <1$ in the following way
\begin{align}
    A_3 &\le   \biggl( \displaystyle\int_{ |h| \le \frac{R-\rho}{4} } \biggl( \displaystyle\int_{\mathbb{R}^{n}}  \Vert D \eta \Vert_{\infty}^p |v(x)|^{p}|h|^{(1-\alpha) p} dx \biggr)^{\frac{q}{p}}  \dfrac{dh}{|h|^{n}} \biggr)^{\frac{1}{q}} \notag\\
    & \le \dfrac{C}{R-\rho}  \Vert v \Vert_{L^p \left(  B_\frac{3R+\rho}{4}   \right)} \biggl( \underbrace{\displaystyle\int_{ |h| \le \frac{R-\rho}{4} } |h|^{(1-\alpha) q}   \dfrac{dh}{|h|^{n}}}_{III}\biggr)^{\frac{1}{q}}  = \dfrac{c(n,q,\alpha)}{(R-\rho)^\alpha} \Vert v \Vert_{L^p \left(  B_\frac{3R+\rho}{4}   \right)}.\label{C}
\end{align}
 Here, the integrals $I$, $II$ and $III$ were computed using polar coordinates. 
\\Now, combining estimates \eqref{stima 1}, \eqref{stima2}, \eqref{A}, \eqref{B}, \eqref{D} and \eqref{C}, we infer
\begin{align}
    \Vert v \Vert_{L^\frac{np}{n-\alpha p}(B_\rho)} &\le c \Vert  v \Vert_{L^p \left(B_\frac{R+\rho}{2} \right)}+ \dfrac{c}{(R-\rho)^\alpha} \Vert v \Vert_{L^p \left(  B_\frac{3R+\rho}{4}   \right)}\notag\\
    & \qquad \qquad + c\biggl( \displaystyle\int_{ |h| \le \frac{R-\rho}{4} } \biggl( \displaystyle\int_{  B_\frac{3R+\rho}{4} } \dfrac{|\tau_hv(x)|^{p}}{|h|^{\alpha p}} dx \biggr)^{\frac{q}{p}}  \dfrac{dh}{|h|^{n}} \biggr)^{\frac{1}{q}}, \notag
\end{align}
for a positive constant $c=c(n,p,q, \alpha)$.
\endproof
\noindent The next result will help us to deal with second order differences in the fractional context.

\begin{prop}\label{prop1}
Let $1 \le p,q < +\infty$, $0 < \alpha < 1$, $M>0$ and $0 <r< \rho < R $. There exists a positive constant $c=c(n,p,q,\alpha)$ such that whenever $v \in L^p(B_R)$ satisfies
\begin{equation}
    \int_{ B_\frac{R-\rho}{8}}\left(  \int_{B_R}\dfrac{|\tau_h(\tau_h v)|^p}{|h|^{p(1+\alpha)}} dx\right)^\frac{q}{p} \dfrac{dh}{|h|^n} \le M^q, \label{ipotesi1}
\end{equation}
then $v \in B^{1+\alpha}_{p,q,\textrm{loc}}(B_R)$ and 
\begin{align}
 \int_{B_{\rho-r}}\left(  \int_{B_r}\dfrac{|\tau_hD v|^p}{|h|^{p\alpha}} dx\right)^\frac{q}{p} \dfrac{dh}{|h|^n} \le &\, c M^q+  c \Vert v \Vert^q_{L^p(B_R)}
    +\frac{c}{(R-\rho)^{q(1+\alpha)}} \Vert v \Vert_{L^p(B_R)}^q \notag\\
   & \qquad +  \frac{c}{(R-\rho)^{q\alpha}} \Vert Dv \Vert_{L^p(B_R)}^q  . \label{stima3}
\end{align}

\end{prop}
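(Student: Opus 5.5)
This is a Marchaud-type inequality: we control the first difference of the gradient, $\tau_h Dv$, by second differences $\tau_h(\tau_h v)$ plus lower-order norms. Since $\tau_h Dv$ with $|h|$ fixed is not directly controlled by $\tau_h\tau_h v$, we go through a mollification (or averaging) argument.

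\textbf{Step 1: representation of the gradient.} We first localize by multiplying $v$ with a cut-off $\eta\in\mathcal{C}^\infty_0(B_{(R+\rho)/2})$ with $\eta=1$ on $B_\rho$ and $|D\eta|\le c/(R-\rho)$, $|D^2\eta|\le c/(R-\rho)^2$. Write $w=\eta v$. Expanding $\tau_h\tau_h(\eta v)$ with (iii) of Proposition \ref{rapportoincrementale} twice produces the main term $\eta(\cdot+2h)\tau_h\tau_h v$. The error terms involve $\tau_h\eta\,\tau_h v$ and $\tau_h\tau_h\eta\, v$. These are bounded by $\frac{|h|}{R-\rho}|\tau_hv|$ and $\frac{|h|^2}{(R-\rho)^2}|v|$. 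After dividing by $|h|^{1+\alpha}$ and integrating in $dh/|h|^n$ over $|h|<(R-\rho)/8$, they give exactly the terms $(R-\rho)^{-q\alpha}\Vert Dv\Vert^q_{L^p}$ (via Lemma \ref{ldiff}) and $(R-\rho)^{-q(1+\alpha)}\Vert v\Vert_{L^p}^q$. Large $|h|$ are handled as in Proposition \ref{lemma embedding}, using crude $L^p$ bounds and the integrability of $|h|^{-n-(1+\alpha)q}$ away from $0$. This reduces us to the global statement that $w\in L^p(\mathbb{R}^n)$ with $\tau_h\tau_h w/|h|^{1+\alpha}\in L^q(dh/|h|^n;L^p)$ implies $[Dw]_{B^\alpha_{p,q}}$ is bounded by this quantity plus $\Vert w\Vert_{L^p}$. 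That is precisely the equivalence of the norms \eqref{norm1} (with $r=2$) and \eqref{equivnorm} (with $m=1$, $\sigma=\alpha$) quoted from Triebel.

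\textbf{Step 2: a direct proof of the reduced statement.} If we prefer not to quote Triebel, we use the identity
\[
\tau_{2h}w-2\tau_h w=\tau_h\tau_h w,
\]
which gives the telescoping formula
\[
\frac{\tau_h w}{|h|}=\frac{\tau_{2^k h}w}{2^k|h|}-\sum_{j=0}^{k-1}\frac{\tau_{2^jh}\tau_{2^jh}w}{2^{j+1}|h|}.
\]
This expresses first-order quotients, and hence $Dw$ as $|h|\to0$, through second differences at dyadic scales. Applying $\tau_\delta$ and using Minkowski's and Hardy's inequalities in the dilation variable then gives the fractional bound. Alternatively, mollify: $Dw=D(w-w*\phi_t)+D(w*\phi_t)$ with $t\sim|\delta|$. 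Here $D(w*\phi_t)$ has its difference $\tau_\delta$ bounded by $|\delta|\,\Vert D^2(w*\phi_t)\Vert$, and $D^2(w*\phi_t)=\int\tau_y\tau_y w\,D^2\tilde\phi_t$-type expressions are controlled by second differences with $|y|\lesssim t$. The term $D(w-w*\phi_t)$ is likewise controlled by $t^{-1}$ times second differences at scale $t$. We also get $Dw\in L^p$ from the same formula, with a bound by $M$ plus $\Vert w\Vert_{L^p}$.

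\textbf{Main obstacle.} The main obstacle is Step 2: the Hardy-type summation over dyadic scales that converts the weighted $L^q(dh/|h|^n)$ norms. It works precisely because $0<\alpha<1$: the geometric factors $2^{j(\alpha-1)}$ and $2^{-j\alpha}$ are summable.

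A secondary technical point is keeping all shifts inside $B_R$. This is why $h$ is restricted to $|h|<(R-\rho)/8$: the dyadic shifts must stay in the region where \eqref{ipotesi1} applies. We also evaluate on the smaller ball $B_r$ with $|h|<\rho-r$, so that the localized $w$ coincides with $v$ where needed.
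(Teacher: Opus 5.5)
Your Step~1 is the paper's proof. It uses the same cut-off $\eta$ with $\eta=1$ on $B_\rho$, passes to $\tau_h D(\eta v)$ on $\mathbb{R}^n$ via the equivalence of \eqref{norm1} (with $r=2$) and \eqref{equivnorm} (with $m=1$, $\sigma=\alpha$), splits the $h$-integral at $|h|=\frac{R-\rho}{8}$ with a crude $L^p$ bound for large $|h|$, and expands $\tau_h(\tau_h(\eta v))$, controlling the lower-order terms by Lemma~\ref{ldiff} and the bounds on $D\eta$, $D^2\eta$.

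Step~2 is an optional, self-contained replacement for the Triebel citation, which the paper does not need. If you keep it, note that $D(w-w*\phi_t)$ is not controlled by second differences at the single scale $t$ alone. It is controlled by a sum over all dyadic scales $2^{-j}t$, $j\ge 0$, of second differences at that scale divided by $2^{-j}t$, and this is exactly where your Hardy-type summation is needed.
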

\proof  For $ r<\rho $ given in the assumptions, let us consider a cut-off function $\eta \in \mathcal{C}_0^\infty(B_\frac{R+\rho}{2})$ such that $0 \le \eta \le 1$, $\eta =1$ on $B_\rho$, $|D\eta| \le \frac{C}{R-\rho}$ and $|D^2\eta| \le \frac{C}{(R-\rho)^2}$.

The equivalence between the norms \eqref{norm} and \eqref{equivnorm} and the properties of $\eta$ give that
\begin{align}
& \int_{B_{\rho-r}}\left(  \int_{B_r}\dfrac{|\tau_hD v|^p}{|h|^{p\alpha}} dx\right)^\frac{q}{p} \dfrac{dh}{|h|^n} \notag \\
    & \qquad \le \int_{\mathbb{R}^n}\left(  \int_{\mathbb{R}^n}\dfrac{|\tau_hD(\eta v)|^p}{|h|^{p\alpha}} dx\right)^\frac{q}{p} \dfrac{dh}{|h|^n} \notag\\
    & \qquad \le c \Vert v \Vert^q_{L^p(B_R)}+c \int_{ B_\frac{R-\rho}{8}}\left(  \int_{\mathbb{R}^n}\dfrac{|\tau_h(\tau_h(\eta v))|^p}{|h|^{p(1+\alpha)}} dx\right)^\frac{q}{p} \dfrac{dh}{|h|^n} \notag\\
    & \qquad \qquad + c \int_{ |h| >\frac{R-\rho}{8} }\left(  \int_{\mathbb{R}^n}\dfrac{|\tau_h(\tau_h(\eta v))|^p}{|h|^{p(1+\alpha)}} dx\right)^\frac{q}{p} \dfrac{dh}{|h|^n}.\label{stima1!}
\end{align}
The last integral on the right-hand side of \eqref{stima1!}  can be estimated as follows  
\begin{align}
    &\int_{ |h| >\frac{R-\rho}{8}}\left(  \int_{\mathbb{R}^n}\dfrac{|\tau_h(\tau_h(\eta v))|^p}{|h|^{p(1+\alpha)}} dx\right)^\frac{q}{p} \dfrac{dh}{|h|^n} \notag\\
    & \qquad \le  4^q \Vert v \Vert^q_{L^p(B_R)} \displaystyle\int_{  |h| > \frac{R-\rho}{8} } \dfrac{dh}{|h|^{n+(1+\alpha) q}} = \dfrac{c(n,q,\alpha)}{(R-\rho)^{(1+\alpha)q}} \Vert v \Vert_{L^p(B_R)}^q. \label{IIint}
\end{align}
 We observe that, by (iii) of Proposition \ref{rapportoincrementale}, we have that
\begin{equation}
    \tau_h(\tau_h(\eta v))(x)=\eta(x+2h)\tau^2_hv(x)+2 \tau_h\eta(x+h)\tau_hv(x)+v(x)\tau^2_h\eta(x). \label{tau2}
\end{equation}
Therefore, by \eqref{tau2}, the properties of $\eta$ and \eqref{ipotesi1},  the second term on the right-hand side of \eqref{stima1!} can be estimated as follows
\begin{align}
   & \int_{ B_\frac{R-\rho}{8}}\left(  \int_{\mathbb{R}^n}\dfrac{|\tau_h(\tau_h(\eta v))|^p}{|h|^{p(1+\alpha)}} dx\right)^\frac{q}{p} \dfrac{dh}{|h|^n} \notag\\ 
   & \qquad \le 
   c \int_{ B_\frac{R-\rho}{8}}\left(  \int_{\mathbb{R}^n} |\eta(x+2h)|^p \frac{|\tau^2_hv|^p}{|h|^{p(1+\alpha)}} dx \right)^\frac{q}{p} \dfrac{dh}{|h|^n} \notag\\
   & \qquad \qquad + c \int_{ B_\frac{R-\rho}{8}}\left( \int_{\mathbb{R}^n}
   |\tau_h \eta(x+h)|^{p}
    \frac{|\tau_h v |^{p}}{|h|^{p(1+\alpha)}} dx \right)^\frac{q}{p} \dfrac{dh}{|h|^n}\notag\\ 
    & \qquad \qquad + c \int_{ B_\frac{R-\rho}{8} }\left(  
    \int_{\mathbb{R}^n} |v|^p \frac{|\tau^2_h\eta(x)|^p}{|h|^{p(1+\alpha)}} dx \right)^\frac{q}{p} \dfrac{dh}{|h|^n} \notag\\
    & \qquad \le 
   c \int_{ B_\frac{R-\rho}{8}}\left(   \int_{B_\frac{3R+\rho}{4}}  \frac{|\tau^2_hv|^p}{|h|^{p(1+\alpha)}} dx \right)^\frac{q}{p} \dfrac{dh}{|h|^n}\notag\\
   & \qquad \qquad + c \int_{ B_\frac{R-\rho}{8} }\left(   \int_{B_\frac{3R+\rho}{4}}
   \Vert D \eta \Vert^p_{\infty}
     \frac{|\tau_h v |^{p}}{|h|^{p\alpha}} dx \right)^\frac{q}{p} \dfrac{dh}{|h|^n}\notag\\
    & \qquad \qquad 
    + c \int_{ B_\frac{R-\rho}{8}}\left(   
    \int_{B_\frac{3R+\rho}{4}} |h|^{p(1-\alpha)} |v|^p \Vert D^2 \eta\Vert^p_{\infty}  dx \right)^\frac{q}{p} \dfrac{dh}{|h|^n}\notag\\
   & \qquad \le c  M^q +  cI+cII. \label{stima2!}
    \end{align}
    Exploiting Lemma \ref{ldiff} and the fact  $|D \eta| \le \frac{c}{R-\rho}$, we estimate the integral $I$ as follows
\begin{align}
    I \le & \,c\int_{ B_\frac{R-\rho}{8}}\left(   \int_{B_\frac{3R+\rho}{4}}
   \Vert D \eta \Vert^p_{\infty}
     |Dv|^p |h|^{p(1-\alpha)} dx \right)^\frac{q}{p} \dfrac{dh}{|h|^n}\notag\\
     \le & \,\dfrac{c}{(R-\rho)^{q}}\Vert Dv \Vert_{L^p(B_R)}^q \int_{ B_\frac{R-\rho}{8}}|h|^{q(1-\alpha)}  \dfrac{dh}{|h|^n} = \dfrac{c}{(R-\rho)^{q\alpha}}\Vert Dv \Vert_{L^p(B_R)}^q , \label{I int}
\end{align}
where the last integral was computed using polar coordinates.\\
 By using the fact  $|D^2 \eta| \le \frac{c}{(R-\rho)^2}$, we get the following estimate for $II$
\begin{align}
    II \le & \, \dfrac{c}{(R-\rho)^{2q}} \Vert v \Vert_{L^p(B_R)}^q  \int_{ B_\frac{R-\rho}{8}} |h|^{q(1-\alpha)}\dfrac{dh}{|h|^n} = \dfrac{c}{(R-\rho)^{q(1+\alpha)}} \Vert v \Vert_{L^p(B_R)}^q. \label{II int}
\end{align}

Putting \eqref{I int} and \eqref{II int} in \eqref{stima2!}, we find  
\begin{align}
    & \int_{ B_\frac{R-\rho}{8}}\left(  \int_{\mathbb{R}^n}\dfrac{|\tau_h(\tau_h(\eta v))|^p}{|h|^{p(1+\alpha)}} dx\right)^\frac{q}{p} \dfrac{dh}{|h|^n} \notag\\
     & \qquad \le c  M^q +  \dfrac{c}{(R-\rho)^{q\alpha}}\Vert Dv \Vert_{L^p(B_R)}^q  +\dfrac{c}{(R-\rho)^{q(1+\alpha)}} \Vert v \Vert_{L^p(B_R)}^q. \label{stima2!!}
\end{align} 
 Eventually, inserting \eqref{IIint}, \eqref{stima2!!} in \eqref{stima1!}, we obtain the desired estimate \eqref{stima3}. 

\endproof

\noindent Combining Propositions \ref{lemma embedding} and \ref{prop1}, we obtain the following

\begin{cor}\label{corollary}
   Suppose that $0 < \alpha < 1$, $1 < p < \frac{n}{\alpha}$ and $1 \le q \le p^*_\alpha$. Let $M>0$ and $0 <s< t < \overline{R} $. There exists a positive constant $c=c(n,p,q,\alpha)$ such that whenever $u \in L^p( B_{\overline{R}})$ satisfies
\begin{equation}
    \int_{B_\frac{\overline{R}-t}{8}}\left(  \int_{B_{\overline{R}}}\dfrac{|\tau_h(\tau_h u)|^p}{|h|^{p(1+\alpha)}} dx\right)^\frac{q}{p} \dfrac{dh}{|h|^n} \le M^q, \notag
\end{equation}
then $Du \in L^\frac{np}{n-\alpha p}_{\textrm{loc}}(B_{\overline{R}})$ and 
\begin{align}
     \Vert Du \Vert_{L^\frac{np}{n-\alpha p}(B_{s})} &\le  c M+  c \Vert u \Vert_{L^p(B_{ \overline{R}})}
    +\frac{c}{(\overline{R}-t)^{1+\alpha}} \Vert u \Vert_{L^p(B_{ \overline{R}})} +\frac{c}{(\overline{R}-t)^{\alpha}} \Vert Du \Vert_{L^p(B_{ \overline{R}})} \notag\\
    & \qquad + c\Vert  Du \Vert_{L^p(B_{\overline{R}})}+ \dfrac{c}{(t-s)^\alpha} \Vert Du \Vert_{L^p(B_{\overline{R}})}.\notag
\end{align}
\end{cor}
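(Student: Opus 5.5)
The plan is to chain Proposition \ref{lemma embedding}, applied to $v = Du$ (componentwise), with Proposition \ref{prop1}, applied to $v = u$, choosing the intermediate radii so that the annuli fit together.

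First, fix $0<s<t<\overline{R}$. Apply Proposition \ref{lemma embedding} to $v=Du$ with the inner ball $B_s$ and the outer ball $B_t$, so that $\rho=s$ and $R=t$. This gives
\begin{align*}
\Vert Du\Vert_{L^{\frac{np}{n-\alpha p}}(B_s)} \le c\Vert Du\Vert_{L^p(B_t)} + \frac{c}{(t-s)^\alpha}\Vert Du\Vert_{L^p(B_t)} + c\left(\int_{|h|\le \frac{t-s}{4}}\left(\int_{B_{\frac{3t+s}{4}}}\frac{|\tau_h Du|^p}{|h|^{\alpha p}}dx\right)^{\frac qp}\frac{dh}{|h|^n}\right)^{\frac1q}.
\end{align*}
The first two terms are already the last two terms of the claimed estimate, since $B_t\subset B_{\overline R}$.

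Next, estimate the double integral with Proposition \ref{prop1}. Take $v=u$, $R=\overline R$, $\rho=t$, and inner radius $r=\frac{3t+s}{4}$. The hypothesis \eqref{ipotesi1} is exactly the assumption of the corollary, with $R-\rho=\overline R-t$. Proposition \ref{prop1} controls the integral over $h\in B_{\rho-r}$, and here $\rho-r=\frac{t-s}{4}$. This matches the $h$-range $|h|\le\frac{t-s}{4}$ and the $x$-domain $B_{\frac{3t+s}{4}}$ above exactly. Taking $q$-th roots in \eqref{stima3} and using $(a+b+\dots)^{1/q}\le a^{1/q}+b^{1/q}+\dots$, we get the bound
\begin{align*}
cM+c\Vert u\Vert_{L^p(B_{\overline R})}+\frac{c}{(\overline R-t)^{1+\alpha}}\Vert u\Vert_{L^p(B_{\overline R})}+\frac{c}{(\overline R-t)^{\alpha}}\Vert Du\Vert_{L^p(B_{\overline R})}.
\end{align*}
Summing the two estimates yields the stated inequality. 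The local membership $Du\in L^{\frac{np}{n-\alpha p}}_{\rm loc}(B_{\overline R})$ follows because $s<t$ are arbitrary.

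The main obstacle is regularity bookkeeping. Proposition \ref{lemma embedding} needs $Du\in B^\alpha_{p,q,\rm loc}$, and the right-hand side of the estimate contains $\Vert Du\Vert_{L^p}$. So I will implicitly assume $u\in W^{1,p}(B_{\overline R})$, as in the intended application where $u$ is a regular approximating solution. Under that assumption, Proposition \ref{prop1} gives $u\in B^{1+\alpha}_{p,q,\rm loc}$. The equivalent norm \eqref{equivnorm} then gives $Du\in B^\alpha_{p,q,\rm loc}$, which justifies the first step. The constraints $1<p<\frac n\alpha$ and $q\le p^*_\alpha$ are exactly those required by Proposition \ref{lemma embedding}, so the constants depend only on $n,p,q,\alpha$.
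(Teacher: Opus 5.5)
Your proposal is correct and follows the paper's proof: Proposition \ref{prop1} is applied to $u$ with $R=\overline{R}$, $\rho=t$, $r=\frac{3t+s}{4}$, estimate \eqref{embedding} is applied to $v=Du$ with $R=t$, $\rho=s$, and the two bounds are combined after taking $q$-th roots. Your remark that $Du\in L^p(B_{\overline{R}})$ is implicitly needed, so that $Du\in B^{\alpha}_{p,q,\mathrm{loc}}$ via \eqref{equivnorm}, is a fair bookkeeping point that the paper leaves tacit.
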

\proof Let us consider radii $0 < s<t < \overline{R} $ and balls $B_{s} \subset B_{t} \subset B_{\overline{R}}  $.
We can use Proposition \ref{prop1} with $R= \overline{R}$, $\rho=t$ and $r= \frac{3t+s}{4}$, thus getting by \eqref{stima3} the following inequality
\begin{align}
\int_{B_\frac{t-s}{4}}\left(  \int_{B_\frac{3t+s}{4}}\dfrac{|\tau_hD u|^p}{|h|^{p\alpha}} dx\right)^\frac{q}{p} \dfrac{dh}{|h|^n} & \le 
\int_{B_\frac{t-s}{4}}\left(  \int_{B_{t}}\dfrac{|\tau_hD u|^p}{|h|^{p\alpha}} dx\right)^\frac{q}{p} \dfrac{dh}{|h|^n} \notag\\
& \le c M^q+  c \Vert u \Vert^q_{L^p(B_{\overline{R}})}
    +\frac{c}{(\overline{R}-t)^{q(1+\alpha)}} \Vert u \Vert_{L^p(B_{\overline{R}})}^q \notag\\
    & \qquad   +\frac{c}{(\overline{R}-t)^{q\alpha}} \Vert Du \Vert_{L^p(B_{\overline{R}})}^q .  \notag
\end{align}
On the other hand, estimate \eqref{embedding} with $v=Du$, $R=t$ and $\rho=s$ reads as
\begin{align}
    \Vert Du \Vert_{L^\frac{np}{n-\alpha p}(B_{s})} &\le  c\Vert  Du \Vert_{L^p \left(B_\frac{t+s}{2} \right)}+ \dfrac{c}{(t-s)^\alpha} \Vert Du \Vert_{L^p \left(B_\frac{3t+s}{4} \right)}\notag\\
    & \qquad \qquad + c\left( \displaystyle\int_{B_{\frac{t-s}{4}}} \left( \displaystyle\int_{B_\frac{3t+s}{4} }\dfrac{|\tau_hDu(x)|^{p}}{|h|^{\alpha p}} dx \right)^{\frac{q}{p}}  \dfrac{dh}{|h|^{n}} \right)^{\frac{1}{q}}. \notag
\end{align}
The conclusion follows combining these two inequalities.
\endproof

 We conclude recalling  that Besov spaces of fractional order $\alpha \in (0,1)$ can be characterized in pointwise terms. We give the following definition according to \cite{koskela}.
\begin{dfn}
 Given a measurable function $v:\mathbb{R}^{n} \rightarrow \mathbb{R}$, a \textit{fractional $\alpha$-Hajlasz gradient for $v$} is a sequence $\{g_{k}\}_{k}$ of measurable, non-negative functions $g_{k}:\mathbb{R}^{n} \rightarrow \mathbb{R}$, together with a null set $N\subset\mathbb{R}^{n}$, such that the inequality 
\begin{center}
$|v(x)-v(y)|\leq (g_{k}(x)+g_{k}(y))|x-y|^{\alpha}$
\end{center} 
holds whenever $k \in \mathbb{Z}$ and $x,y \in \mathbb{R}^{n}\setminus N$ are such that $2^{-k} \leq|x-y|<2^{-k+1}$. We say that $\{g_{k}\}_{k} \in l^{q}(\mathbb{Z};L^{p}(\mathbb{R}^{n}))$ if
\begin{center}
$\Vert \{g_{k}\}_{k} \Vert_{l^{q}(L^{p})}=\biggl(\displaystyle\sum_{k \in \mathbb{Z}}\Vert g_{k} \Vert^{q}_{L^{p}(\mathbb{R}^{n})} \biggr)^{\frac{1}{q}}<  +\infty.$
\end{center} 
\end{dfn}
The following result was proved in \cite{koskela}.
\begin{thm}\label{def Bes}
Let $0< \alpha <1,$ $1 \leq p < +\infty$ and $1\leq q \leq +\infty $. Let $v \in L^{p}(\mathbb{R}^{n})$. Then $v \in B^{\alpha}_{p,q}(\mathbb{R}^{n})$ if, and only if, there exists a fractional $\alpha$-Hajlasz gradient $\{g_{k}\}_{k} \in l^{q}(\mathbb{Z};L^{p}(\mathbb{R}^{n}))$ for $v$. Moreover,
\begin{center}
$\Vert v \Vert_{B^{\alpha}_{p,q}(\mathbb{R}^{n})}\simeq \inf \Vert \{g_{k}\}_{k} \Vert_{l^{q}(L^{p})},$
\end{center}
where the infimum runs over all possible fractional $\alpha$-Hajlasz gradients for $v$.
\end{thm}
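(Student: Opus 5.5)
The statement just given is Theorem \ref{def Bes}, the characterization of $B^\alpha_{p,q}(\mathbb{R}^n)$, $0<\alpha<1$, through fractional $\alpha$-Haj\l asz gradients. I would prove the two implications separately.

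\emph{Sufficiency.} Let $\{g_k\}_k\in l^q(\mathbb{Z};L^p(\mathbb{R}^n))$ be a fractional $\alpha$-Haj\l asz gradient for $v$, and take $h$ with $2^{-k}\le |h|<2^{-k+1}$. Applying the defining inequality with $y=x+h$ gives
$$|\tau_h v(x)|\le |h|^\alpha\bigl(g_k(x)+g_k(x+h)\bigr).$$
Hence $\Vert\tau_h v\Vert_{L^p}\le 2|h|^\alpha\Vert g_k\Vert_{L^p}$. Now split the $h$-integral in \eqref{norm1} into the dyadic annuli $\{2^{-k}\le|h|<2^{-k+1}\}$. Each annulus has $\frac{dh}{|h|^n}$-measure equal to a dimensional constant, so
$$[v]^q_{B^\alpha_{p,q}}\le c\sum_k\Vert g_k\Vert^q_{L^p}.$$
For $q=\infty$ one takes the supremum over $k$ instead of summing.

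\emph{Necessity.} This is the main step: we must build $g_k$ from averaged differences. For $k\in\mathbb{Z}$, $x\in\mathbb{R}^n$ and $r_k=2^{-k+1}$, set
$$g_k(x):=c\,2^{k\alpha}\sum_{j\ge k}2^{(k-j)\delta}\fint_{B(0,2^{-j+1})}|\tau_h v(x)|\,dh,$$
where $0<\delta<1-\alpha$ is fixed. Let $x,y$ be Lebesgue points with $2^{-k}\le|x-y|<2^{-k+1}$. The standard telescoping argument gives
$$|v(x)-v(y)|\le |v(x)-v_{B(x,r)}|+|v_{B(x,r)}-v_{B(y,r)}|+|v(y)-v_{B(y,r)}|, \qquad r\simeq 2^{-k}.$$
Then expand $|v(x)-v_{B(x,r)}|\le\sum_{j\ge k}|v_{B(x,2^{-j-1})}-v_{B(x,2^{-j})}|$. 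Each term is bounded by $\fint_{B(0,2^{-j+1})}|\tau_h v(x)|\,dh$ up to constants, and the middle term is handled in the same way. This yields $|v(x)-v(y)|\le |x-y|^\alpha(g_k(x)+g_k(y))$.

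Because $\alpha+\delta<1$, the geometric weights $2^{(k-j)(\alpha+\delta)}$ cost nothing, and the middle $\alpha$ factor is recovered from $|x-y|^\alpha\simeq2^{-k\alpha}$. Next I would bound $\Vert g_k\Vert_{L^p}$. Minkowski's inequality gives
$$\Vert g_k\Vert_{L^p}\le c\sum_{j\ge k}2^{(k-j)(\delta+\alpha)}\,2^{j\alpha}\sup_{|h|<2^{-j+1}}\Vert\tau_h v\Vert_{L^p}.$$
Here it is better to keep the average in $h$ rather than the supremum, i.e.\ to use $\fint_{B(0,2^{-j+1})}\Vert\tau_h v\Vert_{L^p}\,dh$. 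Discrete Young's inequality for the convolution with the summable kernel $2^{-m\delta}$ then gives
$$\sum_k\Vert g_k\Vert^q_{L^p}\le c\sum_j\Bigl(2^{j\alpha}\fint_{B(0,2^{-j+1})}\Vert\tau_h v\Vert_{L^p}\,dh\Bigr)^q.$$
By Jensen's inequality this is at most $c\sum_j\int_{2^{-j}\le|h|<2^{-j+1}}\Vert\tau_hv\Vert^q_{L^p}|h|^{-\alpha q}\frac{dh}{|h|^n}$, up to the ball-versus-annulus issue, which is $\le c[v]^q_{B^\alpha_{p,q}}$. For $q=\infty$ the same chain works with suprema.

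The main obstacle is exactly this ball-versus-annulus issue in the last step. The average over the full ball $B(0,2^{-j+1})$ includes the small $h$, so it must be decomposed into annuli $\bigcup_{i\ge j}$. This produces a second discrete convolution, with kernel $2^{(j-i)(n-\alpha)}$, which is again summable, and Young's inequality handles it in the same way.

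Finally, I would confirm that the $L^p$ part of the norm is common to both sides, so that the two inequalities together give the equivalence $\Vert v \Vert_{B^{\alpha}_{p,q}}\simeq \inf \Vert \{g_{k}\}_{k} \Vert_{l^{q}(L^{p})}$.
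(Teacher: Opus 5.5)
Your sufficiency direction is correct. The paper gives no proof of this theorem and simply quotes it, so there is no route to compare with. The necessity direction, however, has a gap in its pointwise step.

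Set $a_j(x)=\fint_{B(0,2^{-j+1})}|\tau_h v(x)|\,dh$. For $2^{-k}\le|x-y|<2^{-k+1}$, the telescoping you describe gives $|v(x)-v(y)|\le c\sum_{j\ge k}\bigl(a_j(x)+a_j(y)\bigr)$, with \emph{unit} weights. Your choice of $g_k$, by contrast, gives only $|x-y|^\alpha g_k(x)\simeq c\sum_{j\ge k}2^{(k-j)\delta}a_j(x)$. The comparison $\sum_{j\ge k}a_j\le C\sum_{j\ge k}2^{(k-j)\delta}a_j$ is false for general nonnegative sequences: take $a_j=1$ for $k\le j\le k+N$ with $N$ large. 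So the sentence ``this yields $|v(x)-v(y)|\le|x-y|^\alpha(g_k(x)+g_k(y))$'' does not follow from what precedes it.

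Your remark that the weights ``cost nothing because $\alpha+\delta<1$'' has it backwards. Extra decay in $j$ makes $g_k$ smaller, which hurts the pointwise inequality. It only helps in the norm estimate, where any positive decay rate suffices and $\alpha+\delta<1$ plays no role.

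There are two easy repairs.

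\emph{Drop $\delta$.} Take $g_k:=c\,2^{k\alpha}\sum_{j\ge k}a_j=c\sum_{j\ge k}2^{(k-j)\alpha}\bigl(2^{j\alpha}a_j\bigr)$. This matches the telescoping bound exactly. Since $\alpha>0$, the kernel $2^{-m\alpha}$ is summable, so your Minkowski, discrete Young, Jensen and ball-versus-annulus steps go through unchanged.

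\emph{Use a single scale.} Because $a_k(x)$ is anchored at the point $x$ (it is not a mean oscillation), no telescoping is needed. Average $|v(x)-v(y)|\le|v(x)-v(z)|+|v(z)-v(y)|$ over $z\in B(x,2^{-k+1})\cap B(y,2^{-k+1})$, a set containing a ball of radius $2^{-k}$. This gives $|v(x)-v(y)|\le 2^n\bigl(a_k(x)+a_k(y)\bigr)$. Hence $g_k:=2^n2^{k\alpha}a_k$ is already a fractional $\alpha$-Haj\l asz gradient, and only your ball-versus-annulus step is needed to get $\sum_k\Vert g_k\Vert^q_{L^p}\le c\,[v]^q_{B^\alpha_{p,q}}$. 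This also shows that your own $g_k$, whose $j=k$ term has this form, is admissible for $c$ large, but not for the reason you give.

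Finally, your closing sentence is inaccurate: $\inf\Vert\{g_k\}_k\Vert_{l^q(L^p)}$ contains no $L^p$ part. Under $v\mapsto v(2^m\cdot)$, both this infimum and $[v]_{B^\alpha_{p,q}}$ scale by $2^{m(\alpha-n/p)}$, while $\Vert v\Vert_{L^p}$ scales by $2^{-mn/p}$. So the bound $\Vert v\Vert_{B^\alpha_{p,q}}\lesssim\inf\Vert\{g_k\}_k\Vert_{l^q(L^p)}$ fails as $m\to-\infty$. What your two directions actually prove is $[v]_{B^\alpha_{p,q}}\simeq\inf\Vert\{g_k\}_k\Vert_{l^q(L^p)}$. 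Equivalently, $\Vert v\Vert_{B^\alpha_{p,q}}\simeq\Vert v\Vert_{L^p}+\inf\Vert\{g_k\}_k\Vert_{l^q(L^p)}$, and this is how the stated equivalence has to be read.
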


\section{A priori estimate}\label{secapriori}

\noindent We recall that by assumption  $f \in B^\alpha_{\frac{n}{2 \alpha},p,\textrm{loc}}(\Omega)$,  by the very definition of Besov space, we know that   $f \in L^\frac{n}{2\alpha}_{\textrm{loc}}(\Omega)$. By standard elliptic regularity (see for example \cite[Chapter 10]{DBG}), under assumptions \eqref{A1} and \eqref{A2} and $f \in L^\frac{n}{2\alpha}_{\textrm{loc}}(\Omega)$, local weak solutions $u$ of \eqref{equation} are locally bounded in $\Omega$ and satisfy the following $L^\infty$-bound
\begin{equation}
    \Vert u \Vert_{L^\infty(B_\rho(x_0))} \le \, cR^{1-\alpha} \Vert f \Vert_{L^\frac{n}{2\alpha}(B_R(x_0))}+\dfrac{c}{(R-\rho)^{n/(1-\alpha)}} \left( \fint_{B_R(x_0)}u^2 dx  \right)^\frac{1}{2} \label{Linfty bound}
\end{equation}
for every radii $0 < \rho < R$, where $B_R(x_0) \Subset \Omega$ and $c$ is a positive constant depending on $n$, $\alpha$, $\nu$ and $L$.

The next Theorem is devoted to the proof of an a priori estimate, which is the main tool in the proof of Theorem~\ref{main thm}.

\begin{thm}\label{apriori thm}
   Let $\delta \ge 1$, $0< \alpha <1$ and $1 \le p \le \frac{2n}{n-2\alpha} $. Let $u \in  W^{1,2}_{\textrm{loc}}(\Omega) $ be a local weak solution of \eqref{equation} such that $ Du \in L^\frac{n(\delta+1)}{n-2\alpha}_{\textrm{loc}}(\Omega)$. Then, for every $x_0 \in \Omega$ there exists a radius $R_\delta=R_\delta(x_0,n,\delta,\alpha,\nu,L,p)\leq 1$ and a ball $ B_{R_\delta}(x_0) \Subset \Omega $ such that we have the following estimate
   \begin{align}
  \left(  \int_{ B_{r}(x_0) } |Du|^{\frac{n(\delta+1)}{n-2\alpha}} dx \right)^{\frac{n-2\alpha}{n}}  & \leq \frac{\tilde{c}}{(R-r)^{2(1+\alpha)}} \Biggl[  \int_{B_{R}(x_0) }  |u|^{\delta+1} dx  \notag\\
 & \qquad +\int_{  B_{R}(x_0) } |Du|^{\delta+1} dx 
       +  [f]^{\delta + 1}_{B_{\frac{n}{2\alpha}, p}^\alpha( B_{R}(x_0) )} +  1 \Biggr], \label{stima a priori}
\end{align}
for every radii $0<r<R\le R_\delta$ and for a positive constant  $\tilde{c}=\tilde{c}(n, \delta,\alpha, \nu, L,p)$.
\end{thm}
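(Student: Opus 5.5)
We test the equation with $\varphi=\tau_{-h}\bigl(\eta^2|\tau_h u|^{\delta-1}\tau_h u\bigr)$, where $\eta\in\mathcal C^\infty_0(B_{\rho})$ is a cut-off with $\eta=1$ on $B_{s}$ and $|D\eta|\le c/(\rho-s)$, for radii $r\le s<\rho\le R$ and $|h|$ small. This test function is admissible because $Du\in L^{\frac{n(\delta+1)}{n-2\alpha}}_{\rm loc}$ and $u\in L^\infty_{\rm loc}$ by \eqref{Linfty bound}. Proposition \ref{rapportoincrementale}(ii) moves the difference operator to the other side:
$$\int\langle \tau_h[A(x,Du)],D(\eta^2|\tau_hu|^{\delta-1}\tau_hu)\rangle dx=\int \tau_h f\,\eta^2|\tau_hu|^{\delta-1}\tau_hu\,dx .$$
We then split
$$\tau_h[A(x,Du)]=\bigl[A(x+h,Du(x+h))-A(x+h,Du(x))\bigr]+\bigl[A(x+h,Du(x))-A(x,Du(x))\bigr].$$
For the first bracket, \eqref{A1} gives the good term
$$\nu\,\delta\int\eta^2|\tau_hu|^{\delta-1}|\tau_hDu|^2 dx ,$$
which by Lemma \ref{D1} is comparable to $\int\eta^2|D(|\tau_hu|^{\frac{\delta-1}{2}}\tau_hu)|^2dx$. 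The cut-off contribution $2\eta D\eta|\tau_hu|^{\delta}$ is handled by \eqref{A2} and Young's inequality, producing $\frac{c}{(\rho-s)^2}\int|\tau_hu|^{\delta+1}$. For the second bracket we use \eqref{A3} with $2^{-k}\mathrm{diam}\le|h|<2^{-k+1}\mathrm{diam}$. This gives terms of the form
$$|h|^\alpha\int (g_k(x)+g_k(x+h))(1+|Du|)\eta^2|\tau_hu|^{\delta-1}|\tau_hDu|\,dx$$
plus cut-off terms. Absorbing $|\tau_hDu|$ by Young's inequality into the good term leaves
$$|h|^{2\alpha}\int g_k^2(1+|Du|)^2|\tau_hu|^{\delta-1}\eta^2\,dx .$$

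The main obstacle is estimating this coefficient term and the datum term so that they scale like $|h|^{2\alpha+\delta+1}$ times a summable sequence in $k$. For the coefficient term, apply Hölder with $g_k\in L^{n/\alpha}$, so that $g_k^2\in L^{n/(2\alpha)}$. The conjugate exponent is $\frac{n}{n-2\alpha}$, so $(1+|Du|)^2|\tau_hu|^{\delta-1}$ is needed in $L^{\frac{n}{n-2\alpha}}$. Using $|\tau_hu|\le |h|\int_0^1|Du(x+th)|dt$ (Lemma \ref{ldiff}), this requires $Du\in L^{\frac{n(\delta+1)}{n-2\alpha}}$, which is exactly the hypothesis, and it gives the bound
$$|h|^{2\alpha+\delta-1}\|g_k\|^2_{L^{n/\alpha}}\,\|1+|Du|\|^{\delta+1}_{L^{n(\delta+1)/(n-2\alpha)}(B_\rho)}.$$
The datum term is treated similarly, with $\tau_hf\in L^{\frac{n}{2\alpha}}$ and the Besov seminorm of $f$ taking the role of $\sum_k\|g_k\|^p$.

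This produces an $L^{n(\delta+1)/(n-2\alpha)}$-norm of $Du$ on the right-hand side. The restriction to a small radius $R_\delta$ is what makes it absorbable. Since $\|g_k\|_{L^{n/\alpha}(B_R)}$ and $[f]_{B^\alpha_{n/(2\alpha),p}(B_R)}$ tend to zero as $R\to0$ (absolute continuity of the integrals together with the summability \eqref{gk}), we choose $R_\delta$ so that their total contribution carries a factor $\le\frac12$ in front of the left-hand quantity.

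Next, divide by $|h|^{2\alpha+\delta+1}$, raise to the power $p/2$ in $\frac{dh}{|h|^n}$, and integrate over $|h|\le(\rho-s)/8$. The good term controls the second differences $\tau_h(\tau_h w)$ of the fractional quantity $w=|\tau_hu|^{\frac{\delta-1}{2}}\tau_hu$. More precisely, it controls the $B^{\frac{2\alpha}{\delta+1}+1}$-type seminorm of $\tau_h\tau_h u$ in $L^{\delta+1}$, after using Lemma \ref{D1} in the form $|\tau_h\tau_hu|^{\delta+1}\le c\,|\tau_h(|\tau_hu|^{\frac{\delta-1}{2}}\tau_hu)|^2$ and a Poincaré/Sobolev step in $x$. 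Summation over the dyadic shells in $k$ is possible because $p\le\frac{2n}{n-2\alpha}$, which allows comparison of $\ell^{p}$ and $\ell^{p(\delta+1)/2}$ norms.

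Corollary \ref{corollary} is then applied with exponent $\delta+1$ and differentiability $\frac{2\alpha}{\delta+1}$. Note that $\frac{n(\delta+1)}{n-\frac{2\alpha}{\delta+1}(\delta+1)}=\frac{n(\delta+1)}{n-2\alpha}$. This yields
$$\|Du\|_{L^{\frac{n(\delta+1)}{n-2\alpha}}(B_s)}^{\delta+1}\le \tfrac12\|Du\|^{\delta+1}_{L^{\frac{n(\delta+1)}{n-2\alpha}}(B_\rho)}+\frac{c}{(\rho-s)^{2(1+\alpha)}}\Bigl[\int|u|^{\delta+1}+\int|Du|^{\delta+1}+[f]^{\delta+1}+1\Bigr].$$
Lemma \ref{lm2} removes the $\tfrac12$-term. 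Raising both sides to the power $\frac{n-2\alpha}{n}$ relative to $\delta+1$ gives \eqref{stima a priori}.
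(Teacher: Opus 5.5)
Your proposal follows the paper's proof essentially step for step. It uses the same test function $\tau_{-h}(\eta^2|\tau_hu|^{\delta-1}\tau_hu)$, the same splitting of $\tau_h[A(x,Du)]$, the same H\"older exponents for the coefficient and datum terms, and Lemma \ref{D1} to pass to $\tau_h(\tau_hu)$. It then applies Corollary \ref{corollary} with $\delta+1$, $\frac{p(\delta+1)}{2}$, $\frac{2\alpha}{\delta+1}$ in place of $p,q,\alpha$, uses smallness of $\sum_k\Vert g_k\Vert^p_{L^{n/\alpha}(B_R)}$, and concludes with Lemma \ref{lm2}. Two points need correcting, though neither breaks the argument.

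First, you attribute the restriction $p\le\frac{2n}{n-2\alpha}$ to the wrong step. It plays no role in the dyadic summation. Each shell has $\frac{dh}{|h|^n}$-measure $c(n)\log 2$, and $\Vert g_k(\cdot+h)+g_k\Vert_{L^{n/\alpha}(B_t)}\le 2\Vert g_k\Vert_{L^{n/\alpha}(B_R)}$, so you get $c\sum_k\Vert g_k\Vert^p_{L^{n/\alpha}(B_R)}$ directly and no comparison of $\ell^p$ norms is needed. The restriction is exactly the hypothesis $q\le p^*_\alpha$ of Corollary \ref{corollary}. With $q=\frac{p(\delta+1)}{2}$ and $p^*_\alpha=\frac{n(\delta+1)}{n-2\alpha}$, this reads $p\le\frac{2n}{n-2\alpha}$. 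You should check it there.

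Second, $[f]$ should not enter the smallness condition defining $R_\delta$. Using it would make $R_\delta$ depend on $f$, contrary to the statement. The datum term carries only the power $\frac{\delta}{\delta+1}$ of the left-hand quantity, so it is not "a factor in front of" that quantity. It is absorbed by Young's inequality instead, and this is precisely what produces the term $[f]^{\delta+1}$ in your own final inequality.
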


\begin{proof}
Fix a ball $B_{R_\delta}(x_0) \Subset \Omega$ and consider radii $0<r<\sigma< t'<R \leq R_\delta$  (the radius $R_\delta$ will be choosen later). 
We divide the interval $(\sigma, t')$ into five equal parts by means of $\rho'$, $\rho$, $s$ and $t$, so that $r<\sigma< \rho'< \rho< s<t<t'<R \leq R_\delta$, with 
\begin{equation}
    \rho'-\sigma=\rho-\rho'=s-\rho=t-s=t'-t=\frac{t'-\sigma}{5}. \label{radii}
\end{equation}

For the sake of notation, from now on we shall omit the dependence of the balls on the center $x_0$. 

Let us consider a  cut-off function $\eta \in C_0^{\infty}(B_t)$, with $\eta=1$ on $B_{s}$, $0 \leq \eta \leq 1$, $|D \eta | \leq \frac{c}{t-s}$  and $ |h|\leq \frac{t' - \sigma}{40}$.
We test the equation \eqref{equation} with the function $$\varphi = \tau_{-h}(\eta^2 |\tau_{h}u|^{\delta-1} \tau_{h}u) \qquad (\delta \ge 1)$$ 
and  then we use (ii) of Proposition \ref{rapportoincrementale}, thus getting
    \begin{align}
        I= \int_\Omega \langle \, \tau_h \left(A(x, Du) \right), D(\eta^2 |\tau_h u|^{\delta -1} \tau_h u) \, \rangle = \int_\Omega \eta^2 \, \tau_h f \, |\tau_h u|^{\delta-1} \tau_h u = I_5. \label{equality1}
    \end{align}
The integral $I$ can be written as follows   
    \begin{align}
        I&= \int_\Omega \langle A(x+h, Du(x+h))- A(x+h,Du(x)) , D(\eta^2 |\tau_h u|^{\delta-1} \tau_h u) \rangle dx \notag \\
        & \qquad + \int_\Omega \langle A(x+h, Du(x))- A(x,Du(x)) , D(\eta^2 |\tau_h u|^{\delta-1} \tau_h u) \rangle dx \notag \\
         &= \delta  \int_\Omega \langle A(x+h, Du(x+h))- A(x+h,Du(x)) , \eta^2 |\tau_h u|^{\delta-1} \tau_h Du \,  \rangle dx \notag \\
        & \qquad + 2 \int_\Omega \langle A(x+h, Du(x+h))- A(x+h,Du(x)) , \eta D\eta|\tau_h u|^{\delta-1} \tau_h u \, \rangle dx \notag \\
         & \qquad + \delta  \int_\Omega \langle A(x+h, Du(x))- A(x,Du(x)) , \eta^2 |\tau_h u|^{\delta-1} \tau_h Du \, \rangle dx \notag \\
         & \qquad + 2 \int_\Omega \langle A(x+h, Du(x))- A(x,Du(x)) , \eta D\eta|\tau_h u|^{\delta-1} \tau_h u \, \rangle dx \notag \\
         &= I_1 +I_2 + I_3 + I_4 .\label{equality2}
    \end{align}
From equalities \eqref{equality1} and \eqref{equality2}, we deduce that 
\begin{equation}\label{SommaInt}
   I_1 \leq |I_2|  + |I_3| + |I_4| +|I_5|.
\end{equation} 
By virtue of assumption \eqref{A1}, we infer
    \begin{align}\label{I1}
        I_1 \geq \delta \nu \int_\Omega \eta^2 |\tau_h Du|^2 \, |\tau_h u|^{\delta-1} dx.
    \end{align}
Now we consider the term $|I_2|$. By using hypothesis \eqref{A2}, Young's  inequality, the properties of $\eta$  we have
\begin{align}\label{I2}
    I_2 &\leq 2L \int_\Omega \eta \, |\tau_h Du|  |D\eta| |\tau_h u|^\delta \notag \\
     & \leq \delta \frac{\nu}{4} \int_\Omega \eta^2 |\tau_h Du|^2 |\tau_h u |^{\delta-1} dx + \frac{c(\nu, L, n, \delta)}{(t-s)^2} \int_{B_t} |\tau_h u |^{\delta+1} dx \notag \\
    & \leq \delta \frac{\nu}{4} \int_\Omega \eta^2 |\tau_h Du|^2 |\tau_h u |^{\delta-1} dx + \frac{c(\nu, L, n, \delta)}{(t-s)^2}|h|^{\delta+1} \int_{B_{t'}} |Du|^{\delta+1} dx,
    \end{align}
where  in the last inequality  we used Lemma \ref{ldiff}. From condition \eqref{A3}, Young's inequality and  the properties of $\eta$, we derive for $ 2^{-k-1}\frac{t' - \sigma}{40} \leq |h|< 2^{-k}\frac{t' - \sigma}{40}$, $k \in \mathbb{N},$
    \begin{align}\label{I3prima}
        I_3 &\leq \delta |h|^\alpha \int_\Omega \eta^2 \,  \left( g_k(x+h)+ g_k(x)\right) |\tau_h u|^{\delta-1}\, |\tau_h Du| \,     (\mu^2+|Du|^2)^{\frac{1}{2}}  \, dx \notag \\
        & \leq \delta \frac{\nu}{4}\int_\Omega \eta^2 \,  |\tau_h u|^{\delta-1}\, |\tau_h Du|^2  \, dx \notag \\
        & \qquad + c(\nu) \delta |h|^{2\alpha} \int_{B_t}    \left( g_k(x+h)+ g_k(x)\right)^2 |\tau_h u|^{\delta-1}\,   (1+|Du|^2)  \, dx \notag \\
        & =: \delta \frac{\nu}{4}\int_\Omega \eta^2 \,  |\tau_h u|^{\delta-1}\, |\tau_h Du|^2  \, dx + c(\nu)\delta |h|^{2 \alpha} I'_3.
    \end{align}
 Using H\"older's Inequality  and Lemma \ref{ldiff}, we get
    \begin{align}\label{I'3}
        I'_3 & \leq \left( \int_{B_t}\left( g_k(x+h)+g_k(x) \right)^{\frac{n}{\alpha}} dx \right)^{\frac{2\alpha}{n}} \, \left( \int_{B_{t}} (1+|Du|^{\frac{n(\delta+1)}{n-2\alpha}} )  \, dx\right)^{\frac{n-2\alpha}{n} \frac{2}{\delta+1}}\, \left( \int_{B_{t}}|\tau_h u|^{\frac{n(\delta+1)}{n-2\alpha}}  dx\right)^{\frac{n-2\alpha}{n} \frac{\delta-1}{\delta+1}} \notag \\
           & \leq c(n, \delta,\alpha)|h|^{\delta-1}\left( \int_{B_t}\left( g_k(x+h)+g_k(x) \right)^{\frac{n}{\alpha}} dx \right)^{\frac{2\alpha}{n}} \, \left( \int_{B_{t}} (1+|Du|^{\frac{n(\delta+1)}{n-2\alpha}})  \, dx\right)^{\frac{n-2\alpha}{n} \frac{2}{\delta+1}}\, \notag  \\
           &\qquad \cdot \left( \int_{B_{t'}}|D u|^{\frac{n(\delta+1)}{n-2\alpha}}  dx\right)^{\frac{n-2\alpha}{n} \frac{\delta-1}{\delta+1}} \notag \\
           &\leq c(n, \delta,\alpha) |h|^{\delta-1}\left( \int_{B_t}\left( g_k(x+h)+g_k(x) \right)^{\frac{n}{\alpha}} dx \right)^{\frac{2\alpha}{n}} \, \left( \int_{B_{t'}}  (1+|Du|^{\frac{n(\delta+1)}{n-2\alpha}})   \, dx\right)^{\frac{n-2\alpha}{n}}.
    \end{align}
Hence, it follows that
 \begin{align}\label{I3}
        I_3 &\leq \delta \frac{\nu}{4}\int_\Omega \eta^2 \,  |\tau_h u|^{\delta-1}\, |\tau_h Du|^2  \, dx \notag \\
        & \qquad +  c(n,\nu, \delta,\alpha) |h|^{\delta-1+2\alpha}\left( \int_{B_t} \left(g_k(x+h)+g_k(x)\right)^{\frac{n}{\alpha}} dx \right)^{\frac{2\alpha}{n}}   \notag \\
        & \qquad \cdot \left( \int_{B_{t'}} (1+|Du|^{\frac{n(\delta+1)}{n-2\alpha}} )   \, dx\right)^{\frac{n-2\alpha}{n}}.
        \end{align}
   From \eqref{A3}, Young's inequality, the properties of $\eta$ and Lemma \ref{ldiff}, we get for $ 2^{-k-1}\frac{t' - \sigma}{40} \leq |h|< 2^{-k}\frac{t' - \sigma}{40}$, $ k \in \mathbb{N}$
   \begin{align}\label{I4prima}
        I_4 &\leq 2|h|^\alpha \int_\Omega \eta \,  \left( g_k(x+h)+ g_k(x)\right) |D \eta| \, |\tau_h u|^{\delta}\,  ( \mu^2+|Du|^2)^\frac{1}{2}  \, dx \notag \\
        & \leq c\int_{B_t} |D\eta|^2 \,  |\tau_h u|^{\delta+1}\, dx \notag \\
        & \qquad + c  \int_{B_t}   |h|^{2\alpha} \left( g_k(x+h)+ g_k(x)\right)^2 |\tau_h u|^{ \delta-1 }\,  (1+ |Du|^2 )   \, dx \notag \\
         & \leq \frac{c}{(t-s)^2}\int_{B_t}  \,  |\tau_h u|^{\delta+1}\, dx  +c|h|^{2\alpha} I'_3 \notag \\
            & \leq \frac{c}{(t-s)^2}  |h|^{\delta+1}  \int_{B_t}  \,   |D u|^{\delta+1}  \, dx  +c|h|^{2\alpha} I'_3,
    \end{align}
    where $I'_3$  has been defined at \eqref{I'3}.\\ 
Inserting \eqref{I'3} in \eqref{I4prima}, we deduce
 \begin{align}\label{I4}
        I_4 & \leq \frac{c}{(t-s)^2} |h|^{\delta+1} \int_{B_t}  \,  |D u|^{\delta+1}\, dx  \notag \\
        & \qquad + c(n, \delta,\alpha) |h|^{\delta-1+2\alpha}\left( \int_{B_t} \left(g_k(x+h)+g_k(x)\right)^{\frac{n}{\alpha}} dx \right)^{\frac{2\alpha}{n}}   \, \left( \int_{B_{t'}}  (1+|Du|^{\frac{n(\delta+1)}{n-2\alpha}})  \, dx\right)^{\frac{n-2\alpha}{n}}.
    \end{align}
    Using H\"older's inequality with exponents $\left( \frac{n(\delta+1)}{\delta(n-2\alpha)} , \frac{n(\delta+1)}{n+2\alpha \delta}\right)$, the properties of $\eta$ and Lemma \ref{ldiff}, we have
    \begin{align}\label{I5}
        I_5 &\leq \int_\Omega \eta^2 \, |\tau_h f| \, |\tau_h u|^{\delta} dx \notag \\
        & \leq \left( \int_{B_{t}}|\tau_h f|^{\frac{n(\delta+1)}{n+2\alpha \delta}}  dx\right)^{\frac{n+2\alpha \delta}{n(\delta+1)} }\, \left( \int_{B_{t}}|\tau_h u|^{\frac{n(\delta+1)}{n-2\alpha}}  dx\right)^{\frac{(n-2\alpha)\delta}{n(\delta+1)}} \notag \\
         & \leq |h|^{\delta }c(n,\delta, \alpha)  R^\frac{n-2 \alpha}{\delta+1} \left(\int_{B_t} |\tau_h f|^{\frac{n}{2\alpha}} dx \right)^{\frac{2 \alpha}{n}} \,  \left(\int_{B_{t'}} |Du|^{\frac{n(\delta+1)}{n-2\alpha}} dx\right)^{\frac{(n-2 \alpha)\delta}{n(\delta+1)}} \notag \\
       & \leq c(n,\delta, \alpha)  R^\frac{n-2 \alpha}{\delta+1}  |h|^{\delta+\alpha }\left(\int_{B_t} \frac{|\tau_h f|^{\frac{n}{2\alpha}}}{|h|^{\frac{n}{2}}} dx \right)^{\frac{2 \alpha}{n}} \,  \left(\int_{B_{t'}} |Du|^{\frac{n(\delta+1)}{n-2\alpha}} dx\right)^{\frac{(n-2 \alpha)\delta}{n(\delta+1)}},
    \end{align}
    where we also used that $\frac{n(\delta+1)}{n+2\alpha \delta} \le \frac{n}{2 \alpha}$ for every $\delta \geq1$.\\
    Inserting \eqref{I1}, \eqref{I2}, \eqref{I3}, \eqref{I4} and \eqref{I5} in \eqref{SommaInt}, we get
    \begin{align}\label{Unione2}
        \delta \nu \int_\Omega \eta^2 |\tau_h Du|^2 \, |\tau_h u|^{\delta-1} dx & \leq 
         \delta \frac{\nu}{2} \int_\Omega \eta^2 |\tau_h Du|^2 |\tau_h u |^{\delta-1} dx + \frac{c(\nu, L, n, \delta)}{(t-s)^2}|h|^{\delta+1} \int_{B_{t'}} |Du|^{\delta+1} dx \notag \\
        & \qquad +  c(n, \nu,\delta,\alpha) |h|^{\delta-1+2\alpha}\left( \int_{B_t} \left(g_k(x+h)+g_k(x)\right)^{\frac{n}{\alpha}} dx \right)^{\frac{2\alpha}{n}}   \notag \\
        & \qquad \cdot \left( \int_{B_{t'}} (1+|Du|^{\frac{n(\delta+1)}{n-2\alpha}})  \, dx\right)^{\frac{n-2\alpha}{n}} \notag \\
        & \qquad + c(n,\delta, \alpha)  R^\frac{n-2 \alpha}{\delta+1}  
        |h|^{\delta+\alpha } \left(\int_{B_t} \frac{|\tau_h f|^{\frac{n}{2\alpha}}}{|h|^{\frac{n}{2}}} dx \right)^{\frac{2 \alpha}{n}} \notag\\  
        & \qquad \cdot  \left(\int_{B_{t'}} |Du|^{\frac{n(\delta+1)}{n-2\alpha}} dx\right)^{\frac{(n-2 \alpha)\delta}{n(\delta+1)}}.
    \end{align}
 Reabsorbing the first integral on the right-hand side of \eqref{Unione2} by the left-hand side and using that $\eta=1$ on $B_s$, we obtain
   \begin{align}
   \int_{B_s} |\tau_h Du|^2 \, |\tau_h u|^{\delta-1} dx & \leq 
          \frac{c(\nu, L, n, \delta)}{(t-s)^2}|h|^{\delta+1} \int_{B_{t'}} |Du|^{\delta+1} dx \notag \\
        & \qquad +  c(n, \nu,\delta,\alpha) |h|^{\delta-1+2\alpha}\left( \int_{B_t} \left(g_k(x+h)+g_k(x)\right)^{\frac{n}{\alpha}} dx \right)^{\frac{2\alpha}{n}}  \notag \\
        & \qquad \cdot \left( \int_{B_{t'}}  (1+|Du|^{\frac{n(\delta+1)}{n-2\alpha}} )  \, dx\right)^{\frac{n-2\alpha}{n}} \notag \\
        & \qquad + c(n,\nu,\delta, \alpha)  R^\frac{n-2 \alpha}{\delta+1}  |h|^{\delta+\alpha } \left(\int_{B_t} \frac{|\tau_h f|^{\frac{n}{2\alpha}}}{|h|^{\frac{n}{2}}} dx \right)^{\frac{2 \alpha}{n}} \,  \left(\int_{B_{t'}} |Du|^{\frac{n(\delta+1)}{n-2\alpha}} dx\right)^{\frac{(n-2 \alpha)\delta}{n(\delta+1)}}. \label{Unione1}
    \end{align}
    Exploiting the identity 
    $$\left| D\left( |\tau_h u|^{\frac{\delta-1}{2}} \, \tau_h u \right) \right|^2 =\left( \frac{\delta + 1}{2} \right)^2 |\tau_h Du|^2 \, |\tau_h u|^{\delta-1}$$
    and Lemma \ref{ldiff}, it follows that
    \begin{align}
        \int_{B_\rho} \left| \tau_\lambda \left( |\tau_h u|^{\frac{\delta-1}{2}} \, \tau_h u \right) \right|^2 dx \leq & \, c \, |\lambda|^2 \int_{B_s} \left| D \left( |\tau_h u|^{\frac{\delta-1}{2}} \, \tau_h u \right) \right|^2 dx \notag\\
        = & \, c \, |\lambda|^2(\delta +1)^2 \int_{B_s} |\tau_h Du|^2 \, |\tau_h u|^{\delta-1} . \label{stima tau}
    \end{align}
Choosing $\lambda= h$,  which is legitimate by our choice of the step size $|h|< \frac{t'- \sigma}{40}$  and putting \eqref{Unione1} in \eqref{stima tau}, we derive 
\begin{align}
    \int_{B_\rho} \left| \tau_h \left( |\tau_h u|^{\frac{\delta-1}{2}} \, \tau_h u \right) \right|^2 dx & \leq \dfrac{c(\nu, L, n, \delta)}{(t-s)^2}|h|^{\delta+3}\int_{B_{t'}} |Du|^{\delta+1} dx \notag \\
        & \qquad +  {c(n, \nu,\delta,\alpha)}|h|^{\delta+1+2\alpha} \left( \int_{B_t} \left(g_k(x+h)+g_k(x)\right)^{\frac{n}{\alpha}} dx \right)^{\frac{2\alpha}{n}}  \notag \\
        & \qquad \cdot \left( \int_{B_{t'}} (1+|Du|^{\frac{n(\delta+1)}{n-2\alpha}})  \, dx\right)^{\frac{n-2\alpha}{n}} \notag \\
        & \qquad + {c(n, \nu,\delta,\alpha)}  R^\frac{n-2 \alpha}{\delta+1}  |h|^{\delta+\alpha +2 } \left(\int_{B_t} \frac{|\tau_h f|^{\frac{n}{2\alpha}}}{|h|^{\frac{n}{2}}} dx \right)^{\frac{2 \alpha}{n}} \, \notag\\
        & \qquad \cdot \left(\int_{B_{t'}} |Du|^{\frac{n(\delta+1)}{n-2\alpha}} dx\right)^{\frac{(n-2 \alpha)\delta}{n(\delta+1)}}. \label{tau tau}
\end{align}
 An application of Lemma \ref{D1} with $p=\delta+1$  and the fact that $\delta+1 \geq 2$ give  that  
\begin{equation}
    |\tau_h (\tau_h u)|^{\delta+1} \leq  \left| \tau_h \left( |\tau_h u|^{\frac{\delta-1}{2}}  \, \tau_h u \right) \right|^2 . \label{tau est}
\end{equation}
 Combining \eqref{tau tau} and \eqref{tau est} and then dividing both sides by $|h|^{\delta+1+2 \alpha}$, we get
\begin{align}
    \int_{B_\rho} \frac{|\tau_h (\tau_h u)|^{\delta+1}}{|h|^{\delta+1+2 \alpha}}  dx & \leq \dfrac{c(\nu, L, n, \delta)}{(t-s)^2}|h|^{2(1-\alpha)}\int_{B_{t'}} |Du|^{\delta+1} dx \notag \\
        & \qquad +  {c(n, \nu,\delta,\alpha)} \left( \int_{B_t} \left(g_k(x+h)+g_k(x)\right)^{\frac{n}{\alpha}} dx \right)^{\frac{2\alpha}{n}}  \notag \\
        & \qquad \cdot \left( \int_{B_{t'}}(1+|Du|^{\frac{n(\delta+1)}{n-2\alpha}}) \, dx\right)^{\frac{n-2\alpha}{n}} \notag \\
        & \qquad + {c(n, \nu,\delta,\alpha)}  R^\frac{n-2 \alpha}{\delta+1}  |h|^{1-\alpha } \left(\int_{B_t} \frac{|\tau_h f|^{\frac{n}{2\alpha}}}{|h|^{\frac{n}{2}}} dx \right)^{\frac{2 \alpha}{n}} \, \notag\\
        & \qquad \cdot \left(\int_{B_{t'}} |Du|^{\frac{n(\delta+1)}{n-2\alpha}} dx\right)^{\frac{(n-2 \alpha)\delta}{n(\delta+1)}}. \label{tau tau 1}
\end{align}
Now, raise to the power $\frac{p}{2}$,  with $p$ appearing in \eqref{gk}, integrate over the ball $B(0,(t' - \sigma)/40 )$ w.r.t.\ the measure $\frac{d h}{|h|^n}.$
Since the functions $g_k$ are defined for $2^{-k-1}\frac{t' - \sigma}{40} \leq |h|< 2^{-k}\frac{t' - \sigma}{40}$, we interpret the ball $B(0,(t' - \sigma)/40 )$ as
$$ B(0,(t' - \sigma)/40 )= \bigcup_{k=0}^{\infty} B(0,2^{-k}(t' - \sigma)/40) \setminus B(0,2^{-k-1}(t' - \sigma)/40)=: \bigcup_{k=0}^{\infty} E_k.$$
 Then, by \eqref{tau tau 1}, we infer the following estimate 
\begin{align}
 \int_{ B_\frac{\rho-\rho'}{8}(0)} \left(  \int_{B_\rho} \frac{|\tau_h (\tau_h u)|^{\delta+1} }{|h|^{\delta +2\alpha+1}} dx \right)^{\frac{p}{2}} \frac{dh}{|h|^n} & \leq  \frac{c}{(t-s)^{p}} \int_{B_\frac{t'-\sigma}{40}(0)} \left( \int_{B_{t'}} |Du|^{\delta+1} dx \right)^{{\frac{p}{2}}} |h|^{{(1-\alpha)p}}  \frac{dh}{|h|^n}\notag \\
        & \qquad +  c \sum_{k=0}^\infty \int_{E_k}\left( \int_{B_t} \left(g_k(x+h)+g_k(x)\right)^{\frac{n}{\alpha}} dx  \right)^{\frac{p\alpha}{n}}  \frac{dh}{|h|^n} \notag \\
        & \qquad \cdot \left( \int_{B_{t'}} (1+|Du|^{\frac{n(\delta+1)}{n-2\alpha}})  \, dx\right)^{\frac{(n-2\alpha)p}{2n}} \notag \\
        & \qquad + {c}  R^\frac{p(n-2 \alpha)}{2(\delta+1)}  \int_{B_\frac{t'-\sigma}{40}(0)}\left(\int_{B_t} \frac{|\tau_h f|^{\frac{n}{2\alpha}}}{|h|^{\frac{n}{2}}} dx \right)^{\frac{ \alpha p}{n}}  |h|^{\frac{(1-\alpha)p}{2}} \frac{dh}{|h|^n} \notag \\
        & \qquad \cdot   \left(\int_{B_{t'}} |Du|^{\frac{n(\delta+1)}{n-2\alpha}} dx\right)^{\frac{(n-2 \alpha)\delta}{n(\delta+1)}{\frac{p}{2}}}, \label{norm!}
\end{align}
with $c=c(n, \delta,\alpha, \nu, L,p)$ positive constant.  Here, in the integral on the left-hand side, we used the fact that $B(0,(t' - \sigma)/40 )= B(0,(\rho-\rho')/8 )$, which follows from the definition of radii given in \eqref{radii}.

Now, we take care of the term
$$J=\sum_{k=0}^\infty \int_{E_k}\left( \int_{B_t} \left(g_k(x+h)+g_k(x)\right)^{\frac{n}{\alpha}} dx  \right)^{\frac{p\alpha}{n}}  \frac{dh}{|h|^n}. $$
Following the arguments in \cite{Baison},
we write the right-hand side of the previous estimate in polar coordinates, so $h \in E_k$ if, and only if, $h= m \xi$ for some $ 2^{-k-1}\frac{t' - \sigma}{40} \leq m< 2^{-k}\frac{t' - \sigma}{40}$ and some $\xi$ in the unit sphere $\mathbb{S}^{n-1}$ on $\mathbb{R}^n$. We denote by $d S(\xi)$ the surface measure on $\mathbb{S}^{n-1}$.  Setting $\Delta_{m \xi} g_k(x)=g_k(x+m \xi)$ and $ m_k=2^{-k}(t' - \sigma)/40 $,  we infer
\begin{align*}
J \leq &  \displaystyle\sum_{k=0}^{\infty} \displaystyle\int_{m_{k+1}}^{m_k} \displaystyle\int_{\mathbb{S}^{n-1}}  \biggl( \displaystyle\int_{B_{t}}(g_k(x+m \xi)+g_k(x))^\frac{n}{\alpha} dx \biggr)^{\frac{p \alpha}{n}} dS(\xi)\dfrac{dm}{m} \notag\\
= & \displaystyle\sum_{k=0}^{\infty} \displaystyle\int_{m_{k+1}}^{m_k} \displaystyle\int_{\mathbb{S}^{n-1}}  \Vert \Delta_{m \xi}g_k+g_k \Vert_{L^{\frac{n}{\alpha}}(B_{t})}^{p} dS(\xi)\dfrac{dm}{m}.
\end{align*}
  We note that for each $\xi \in \mathbb{S}^{n-1}$ and $m_{k-1}\leq m \leq m_k$
\begin{align*}
\Vert \Delta_{m \xi}g_k+g_k \Vert_{L^{\frac{n}{\alpha}}(B_{t})}  \leq \,& \Vert g_k \Vert_{L^{\frac{n}{\alpha}}(B_{t-m_k \xi})} + \Vert g_k \Vert_{L^{\frac{n}{\alpha}}(B_{t})} \notag\\
\leq &\, 2  \Vert g_k \Vert_{L^{\frac{n}{\alpha}}(B_{R})},
\end{align*}
hence
\begin{equation}
    J \leq  c(n)\,2^p \,\text{log }2\, \sum_{k=0}^\infty \Vert g_k \Vert_{L^\frac{n}{\alpha}(B_R)}^p , \label{J}
\end{equation}
which is finite by assumption \eqref{gk}.

Inserting \eqref{J} in \eqref{norm!}, we find that
\begin{align}
 \int_{  B_\frac{\rho-\rho'}{8}(0)} \left(  \int_{B_\rho} \frac{|\tau_h (\tau_h u)|^{\delta+1} }{|h|^{\delta +2\alpha+1}} dx \right)^{\frac{p}{2}} \frac{dh}{|h|^n} & \leq  \frac{c}{(t-s)^{p}} \left( \int_{B_{t'}} |Du|^{\delta+1} dx \right)^{{\frac{p}{2}}} \int_{B_R(0)}  |h|^{{(1-\alpha)p}}  \frac{dh}{|h|^n}\notag \\
        & \qquad +  c \sum_{k=0}^\infty \Vert g_k \Vert_{L^\frac{n}{\alpha}(B_R)}^p\left( \int_{B_{t'}} (1+|Du|^{\frac{n(\delta+1)}{n-2\alpha}})  \, dx\right)^{\frac{(n-2\alpha)p}{2n}} \notag \\
        & \qquad + c  R^\frac{p(n-2 \alpha)}{2(\delta+1)}  \, [f]^{\frac{p}{2}}_{B^\alpha_{\frac{n}{2\alpha}, p}(B_R)} \left( \int_{B_R(0)}|h|^{{(1-\alpha)p}} \frac{dh}{|h|^n} \right)^\frac{1}{2} \notag \\
        & \qquad \cdot   \left(\int_{B_{t'}} |Du|^{\frac{n(\delta+1)}{n-2\alpha}} dx\right)^{\frac{(n-2 \alpha)\delta}{n(\delta+1)}{\frac{p}{2}}}. \label{norm!!}
\end{align}
We observe that, since $\alpha \in (0,1)$, the integral $$\int_{B_R(0)}|h|^{{(1-\alpha)p}} \frac{dh}{|h|^n} = c(n,\alpha,p, R)$$
is finite. 

 Now, since $p \in [1, \frac{2n}{n-2 \alpha}]$, we can use Corollary \ref{corollary} with $\delta+1$, $\frac{p(\delta+1)}{2}$ and $\frac{2 \alpha}{\delta+1}$ in place of $p$, $q$ and $\alpha$ respectively. This, together with \eqref{norm!!} and \eqref{radii}, gives for every $r<\sigma<t'<R$ the following estimate 
\begin{align}
 \left(  \int_{ B_{\sigma}} |Du|^{\frac{n(\delta+1)}{n-2\alpha}} dx \right)^{\frac{p(n-2\alpha)}{2n}}  & \leq   c \left( \int_{B_{R}}  |u|^{\delta+1} dx \right)^{{\frac{p}{2}}}
    +\frac{c}{(t'-\sigma)^{p(1+\alpha)}}  \left( \int_{B_{R}}  |u|^{\delta+1} dx \right)^{{\frac{p}{2}}}\notag\\
 & \qquad +c\left( \int_{B_{R}}  |Du|^{\delta+1} dx \right)^{{\frac{p}{2}}} \notag \\
        & \qquad +  \frac{c
    }{(t'-\sigma)^{{\alpha p}}}  \left( \int_{ B_{R}} |Du|^{\delta+1} dx \right)^{{\frac{p}{2}}} \notag \\
        & \qquad + \frac{c}{(t'-\sigma)^p} \left( \int_{B_{R}} |Du|^{\delta+1} dx \right)^{{\frac{p}{2}}} \notag \\
        & \qquad +  \gamma   \sum_{k=0}^\infty \Vert g_k \Vert_{L^\frac{n}{\alpha}(B_R)}^p \left( \int_{B_{t'}}  (1+|Du|^{\frac{n(\delta+1)}{n-2 \alpha}} )  \, dx\right)^{\frac{(n-2\alpha)p}{2n}} \notag \\
        & \qquad + c  R^\frac{p(n-2 \alpha)}{2(\delta+1)} \,[f]^{\frac{p}{2}}_{B^\alpha_{\frac{n}{2\alpha}, p}(B_R)}  \left( \int_{B_{t'}} |Du|^{\frac{n(\delta+1)}{n-2 \alpha}} \, dx\right)^{\frac{(n-2\alpha)\delta p}{2n(\delta + 1)}}, \label{est1!}
\end{align}
for a positive constants $c=c(n, \delta,\alpha, \nu, L,p)$ and  $\gamma=\gamma(n, \delta,\alpha, \nu, L,p)$.

Then, applying Young's inequality in the last term on the right-hand side of \eqref{est1!}, we derive
\begin{align}
 \left(  \int_{ B_{\sigma}} |Du|^{\frac{n(\delta+1)}{n-2\alpha}} dx \right)^{\frac{p(n-2\alpha)}{2n}}  & \leq   c \left( \int_{ B_{R}}  |u|^{\delta+1} dx \right)^{{\frac{p}{2}}}
    +\frac{c}{(t'-\sigma)^{p(1+\alpha)}}  \left( \int_{ B_{R}}  |u|^{\delta+1} dx \right)^{{\frac{p}{2}}}\notag\\
 & \qquad +c\left(\int_{B_{R}} |Du|^{\delta+1} dx \right)^{{\frac{p}{2}}} \notag \\
        & \qquad +  \frac{{c}}{(t'-\sigma)^{\alpha p}}   \left(\int_{ B_{R}} |Du|^{\delta+1} dx \right)^{{\frac{p}{2}}}\notag \\
        & \qquad + \frac{{c}}{(t'-\sigma)^{p}}  \left(\int_{B_R} |Du|^{\delta+1} dx \right)^{{\frac{p}{2}}} \notag \\
        & \qquad +  \gamma   \sum_{k=0}^\infty \Vert g_k \Vert_{L^\frac{n}{\alpha}(B_R)}^p \left( \int_{B_{t'}}  |Du|^{\frac{n(\delta+1)}{n-2 \alpha}}  \, dx\right)^{\frac{(n-2\alpha)p}{2n}} \notag \\
        & \qquad +  c \gamma R^\frac{p(n-2 \alpha)}{2}    \sum_{k=0}^\infty \Vert g_k \Vert_{L^\frac{n}{\alpha}(B_R)}^p \notag \\
        & \qquad + \frac{1}{4}\left( \int_{B_{t'}} |Du|^{\frac{n(\delta+1)}{n-2 \alpha}} \, dx\right)^{\frac{(n-2\alpha) p}{2n}}+{c}  R^\frac{p(n-2 \alpha)}{2}  \,[f]^{\frac{p(\delta + 1)}{2}}_{B_{\frac{n}{2\alpha}, p}^\alpha(B_R)}. \notag
\end{align}
where $c$ and $ \gamma $  are positive constants depending at most on $(n,\delta, \alpha,\nu, L, p).$

We choose a radius $0<R_\delta\le 1$, depending on $(x_0,n,\delta, \alpha,\nu, L, p),$ such that for $0<R \le R_\delta$ it holds
\begin{equation} \label{smallness}
     \gamma  \sum_{k=0}^\infty \Vert g_k \Vert^p_{L^\frac{n}{\alpha}(B_R)}< \dfrac{1}{4} .
\end{equation}
\noindent We show that such a radius $R_\delta$ does exist. Indeed, by assumption \eqref{gk}, we have that there exists $m \in \mathbb{N}$ such that
$$\sum_{k=m+1}^\infty \Vert g_k \Vert^p_{L^\frac{n}{\alpha}(B_R)} < \dfrac{1}{8  \gamma  }.$$
Moreover, by the absolute continuity of the integral, for every $k \in \{0,\dots,m\}$ there exists $R_{\delta,k}$ such that if $R \le R_{\delta,k}$, then
$$ \Vert g_k \Vert^p_{L^\frac{n}{\alpha}(B_R)} < \dfrac{1}{8(m+1)  \gamma }.$$
Hence, \eqref{smallness} follows by choosing $R_\delta= \min \{ R_{\delta_k}, \, k=0,\dots,m \}$.

Therefore, if $0< R \leq R_\delta$, we get for every $r<\sigma <t'<R$
\begin{align}
 \left(  \int_{ B_{\sigma}} |Du|^{\frac{n(\delta+1)}{n-2\alpha}} dx \right)^{\frac{p(n-2\alpha)}{2n}}  & \leq   c \left( \int_{B_{R}}  |u|^{\delta+1} dx \right)^{{\frac{p}{2}}}
    +\frac{c}{(t'-\sigma)^{p(1+\alpha)}}  \left( \int_{B_{R}}  |u|^{\delta+1} dx \right)^{{\frac{p}{2}}}\notag\\
 & \qquad +c\left(\int_{B_{R}} |Du|^{\delta+1} dx \right)^{{\frac{p}{2}}} \notag \\
        & \qquad +  \frac{{c}}{(t'-\sigma)^{\alpha p}}   \left(\int_{ B_{R}} |Du|^{\delta+1} dx \right)^{{\frac{p}{2}}}\notag \\
        & \qquad + \frac{{c}}{(t'-\sigma)^{p}}  \left(\int_{B_R} |Du|^{\delta+1} dx \right)^{{\frac{p}{2}}} \notag \\
        & \qquad +\frac{1}{4} \left( \int_{B_{t'}}   (1+|Du|^{\frac{n(\delta+1)}{n-2 \alpha}})  \, dx\right)^{\frac{(n-2\alpha)p}{2n}} \notag \\
        & \qquad +  c  R^\frac{p(n-2 \alpha)}{2}   \notag \\
        & \qquad + \frac{1}{4}\left( \int_{B_{t'}} |Du|^{\frac{n(\delta+1)}{n-2 \alpha}}\, dx \right)^{\frac{(n-2\alpha)p}{2n}}+c  R^\frac{p(n-2 \alpha)}{2}   \, [f]^{\frac{p(\delta + 1)}{2}}_{B_{\frac{n}{2\alpha}, p}^\alpha(B_R)}.
\end{align}
% \begin{rmk}
%    $R_\delta$ depends on $\delta$, but this is not an issue since the iteration concludes after a finite number of steps.
%\end{rmk} 
Thus applying the iteration Lemma \ref{lm2}, we infer
\begin{align}
 \left(  \int_{ B_{r} } |Du|^{\frac{n(\delta+1)}{n-2\alpha}} dx \right)^{\frac{p(n-2\alpha)}{2n}}  & \leq   \tilde{c} \left( \int_{ B_{R}}  |u|^{\delta+1} dx \right)^{{\frac{p}{2}}}
    +\frac{\tilde{c}}{(R-r)^{p(1+\alpha)}}  \left( \int_{B_{R}}  |u|^{\delta+1} dx \right)^{{\frac{p}{2}}}\notag\\
 & \qquad +\tilde{c} \left(\int_{ B_{R}} |Du|^{\delta+1} dx \right)^{{\frac{p}{2}}} \notag \\
        & \qquad +  \frac{\Tilde{c}}{(R-r)^{\alpha p}}   \left(\int_{ B_{R}} |Du|^{\delta+1} dx \right)^{{\frac{p}{2}}}\notag \\
        & \qquad + \frac{\Tilde{c}}{(R- r)^{p}}  \left(\int_{B_R} |Du|^{\delta+1} dx \right)^{{\frac{p}{2}}} \notag \\
       & \qquad +  \Tilde{c}  R^\frac{p(n-2 \alpha)}{2}   +{\Tilde{c}}  R^\frac{p(n-2 \alpha)}{2}   \, [f]^{\frac{p(\delta + 1)}{2}}_{B_{\frac{n}{2\alpha}, p}^\alpha(B_R)}, \notag
\end{align}
for every concentric balls $ B_{r} \subset B_R \subseteq B_{R_\delta} \Subset \Omega $, where $\tilde{c}=\tilde{c}(n, \delta,\alpha, \nu, L,p)$ is a positive constant. Eventually, raising to the power $2/p$, we get the desired estimate.
 \end{proof}

\noindent Theorem \ref{apriori thm} allows us to implement a Moser-type iteration scheme that improves the regularity of a local weak solution to \eqref{equation} from $W^{1,2}_{\textrm{loc}}(\Omega)$ to $W^{1,q}_{\textrm{loc}}(\Omega)$, for every $q \in [2, +\infty)$.

\begin{thm}
Let $0< \alpha <1$ and $1 \le p \le \frac{2n}{n-2\alpha} $. Let $u \in W^{1,2}_{\textrm{loc}}(\Omega)$ be a local weak solution of \eqref{equation} such that $ Du \in L^\frac{2n}{n-2\alpha}_{\textrm{loc}}(\Omega)$. Then, $u \in W^{1,q}_{\textrm{loc}}(\Omega)$ for every $q \in [2,+\infty)$ and for every $x_0 \in \Omega$ there exists a radius $\tilde{R}=\tilde{R}(x_0,n,q,\alpha,\nu,L,p) \le 1$ and a ball $B_{\tilde{R}}(x_0)  \Subset \Omega$ such that we have the following estimate 
\begin{align}
  \left(  \int_{ B_{R/2}(x_0) } |Du|^{q} dx \right)^{\frac{1}{q}}  & \leq  \tilde{c}\Biggl\{ \Vert u\Vert_{L^\infty(B_R(x_0))}+[f]_{B_{\frac{n}{2\alpha}, p}^\alpha(B_R(x_0))}+  \left( \int_{B_{R}(x_0)} |Du|^{2} dx\right)^{\frac{1}{2}}  + 1   \Biggr\}. \label{stima ite}
\end{align}
for every radius $0<R\le \tilde{R} $ and for a positive constant  $\tilde{c}=\tilde{c}(n, q,\alpha, \nu, L,p,R)$.
\end{thm}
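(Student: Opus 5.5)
The proof is a finite bootstrap driven by Theorem \ref{apriori thm}. Set $\kappa:=\frac{n}{n-2\alpha}>1$ and define exponents recursively by $q_0=2$ and $q_{j+1}=\kappa q_j$, so that $q_j=2\kappa^j\to+\infty$. Given a target $q\ge2$, fix the first index $N$ with $q_N\ge q$. Once $Du\in L^{q_N}$ on a ball is established, the case of general $q$ follows from H\"older's inequality on a bounded ball. At step $j$ we apply Theorem \ref{apriori thm} with $\delta_j:=q_j-1\ge1$, which gives $\frac{n(\delta_j+1)}{n-2\alpha}=q_{j+1}$.

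\textbf{Qualitative hypothesis.} Theorem \ref{apriori thm} assumes in advance that $Du\in L^{q_{j+1}}_{\textrm{loc}}$. At $j=0$ this is exactly the hypothesis $Du\in L^{\frac{2n}{n-2\alpha}}_{\textrm{loc}}$. For $j\ge1$ the a priori theorem cannot be used blindly, since it requires the very integrability we want to prove.

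I expect this to be the main obstacle. My plan is to re-run the argument of Theorem \ref{apriori thm} under the weaker knowledge $Du\in L^{q_j}_{\textrm{loc}}$. In the estimates of $I'_3$ and $I_5$, the quantity $|\tau_h u|$ raised to the high power can be bounded via $\|u\|_{L^\infty}$, using the bound \eqref{Linfty bound}, together with $|\tau_hu|\le 2\|u\|_\infty$. This lowers the integrability needed from $Du$, so the finiteness of the Besov quantity from \eqref{norm!} and Corollary \ref{corollary} then yields $Du\in L^{q_{j+1}}_{\textrm{loc}}$ qualitatively. The appearance of $\|u\|_{L^\infty(B_R)}$ in \eqref{stima ite} supports this route.

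If it fails, the fallback is to postpone the issue: prove \eqref{stima ite} for regularized problems, where $Du$ is smooth, and pass to the limit later, as the paper announces for Section \ref{secapp}. Either way, at the qualitative level I would treat $Du\in L^{q_{j+1}}_{\textrm{loc}}$ as justified and focus on the quantitative chain.

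\textbf{Quantitative chain.} Take $\tilde R:=\min\{R_{\delta_0},\dots,R_{\delta_{N-1}}\}$, which is positive because there are only finitely many steps. Fix $R\le\tilde R$ and the radii $r_j:=R/2+2^{-j-1}R$, decreasing from $R$ to $R/2$ (so $r_0=R$ and $r_N\to R/2$ up to a harmless adjustment of the last step). Apply \eqref{stima a priori} on $B_{r_{j+1}}\subset B_{r_j}$. The terms on its right-hand side are handled as follows:
\begin{itemize}
\item $\int_{B_{r_j}}|u|^{q_j}\le c(n)R^n\|u\|^{q_j}_{L^\infty(B_R)}$;
\item $\int_{B_{r_j}}|Du|^{q_j}$ is exactly the previous step's left-hand side;
\item $(r_j-r_{j+1})^{-2(1+\alpha)}$ is a constant depending on $R$ and $j$, absorbed into $\tilde c(\dots,R)$.
\end{itemize}
Taking $q_{j+1}$-th roots and using $(a+b)^{1/s}\le a^{1/s}+b^{1/s}$ gives
\[
\|Du\|_{L^{q_{j+1}}(B_{r_{j+1}})}\le c_j\bigl(\|u\|_{L^\infty(B_R)}+[f]_{B^\alpha_{\frac n{2\alpha},p}(B_R)}+\|Du\|_{L^{q_j}(B_{r_j})}+1\bigr).
\]
Here all quantities enter linearly, which is consistent since each term in \eqref{stima a priori} is homogeneous of degree $\delta+1=q_j$. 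Note the mismatch of powers: the left side of \eqref{stima a priori} carries power $\frac{n-2\alpha}{n}$, so the $q_{j+1}$-th root of the left integral is the $q_j$-th root of the right-hand side.

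Chaining the $N$ inequalities, starting from $\|Du\|_{L^2(B_R)}$, gives \eqref{stima ite} with a constant depending on $N=N(q,\alpha,n)$ and $R$. Finally, covering any compact set by finitely many such balls gives $u\in W^{1,q}_{\textrm{loc}}(\Omega)$.
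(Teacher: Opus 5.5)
Your quantitative chain is the paper's proof. It uses the same exponents $\delta_i+1=2\kappa^i$, the same radii $\frac R2(1+2^{-i})$, and $\tilde R$ as a minimum of finitely many radii from \eqref{smallness}. It also uses the same bound $\int_{B_R}|u|^{\delta+1}\le c(n)R^n\|u\|^{\delta+1}_{L^\infty(B_R)}$ and H\"older's inequality at the end.

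\textbf{The qualitative step.} The issue you flag is genuine, and the paper's own proof does not settle it either. It states that the hypothesis $Du\in L^{\frac{2n}{n-2\alpha}}_{\mathrm{loc}}$ makes every application of Theorem \ref{apriori thm} legitimate, but this only covers the step $\delta=1$. Since Theorem \ref{apriori thm} estimates exactly the norm it assumes finite, it gives no qualitative gain by itself.

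\textbf{Your first remedy fails.} Bounding $|\tau_h u|^{\delta-1}$ in $I'_3$ (or $|\tau_h u|^{\delta}$ in $I_5$) by powers of $2\|u\|_{L^\infty}$ discards the factor $|h|^{\delta-1}$ supplied by Lemma \ref{ldiff}. After dividing by $|h|^{\delta+1+2\alpha}$ a factor $|h|^{1-\delta}$ remains. For $\delta>1$ the $g_k$-contribution then becomes $\sum_k 2^{k(\delta-1)p/2}\|g_k\|^p_{L^{n/\alpha}}$, which \eqref{gk} does not control. Independently, the absorption via Lemma \ref{lm2} of the term $\gamma\sum_k\|g_k\|^p_{L^{n/\alpha}(B_R)}(\int_{B_{t'}}|Du|^{\frac{n(\delta+1)}{n-2\alpha}}dx)^{\frac{(n-2\alpha)p}{2n}}$ requires that integral to be finite beforehand. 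So the target integrability is structurally needed, not only in $I'_3$ and $I_5$.

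\textbf{Your fallback is the route that works.} It is effectively what the paper does in Section \ref{secapp}. Apply the chain to the regularized solutions $u_\varepsilon\in\mathcal{C}^{1,\beta}_{\mathrm{loc}}$, for which every a priori hypothesis holds. Then check that the constants are uniform in $\varepsilon$:
\begin{itemize}
\item the smallness radius, because $\|g_k*\phi_\varepsilon\|_{L^{n/\alpha}(B_R)}\le\|g_k\|_{L^{n/\alpha}(B_{R+\varepsilon})}$;
\item $\|u_\varepsilon\|_{L^\infty}$, by \eqref{Linfty bound}.
\end{itemize}
Finally pass to the limit using $u_\varepsilon\to u$ in $W^{1,2}$ and Fatou's lemma. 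With this step actually carried out rather than assumed, your argument is complete.
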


\begin{proof}
    We define a sequence of exponents $(\delta_i)_{i \in \mathbb{N}_0}$ by
    \begin{equation}
        \begin{cases}
            \delta_0=1, \\
    \delta_{i+1}=\dfrac{n(\delta_i+1)}{n-2\alpha}-1=2 \left(  \dfrac{n}{n-2\alpha}\right)^{i+1}-1, \qquad i \ge 0.
        \end{cases} \notag
    \end{equation}
    Clearly $\delta_i \to + \infty$ as $i \to +\infty$. Hence, for all $q \in [2,+\infty)$ there exists $i_* \in \mathbb{N}$ such that $\delta_{i_*-1}+1< q \le\delta_{i_*}+1$.

    For any $i \in \{ 0,1,\dots,i_* \}$, let $0 < R_{\delta_i} \le 1$ be the radius  such that \eqref{smallness} holds. Now, fix a radius $R$ such that $R < \min \{ R_{\delta_i}: i =0,1, \dots, i_* \}$ and define a sequence of radii $(\rho_i)_{i \in \mathbb{N}_0}$ by
    \begin{equation}
        \rho_i = \dfrac{R}{2} \left(   1+\dfrac{1}{2^i}\right). \notag
    \end{equation}
By  the a priori assumption $Du \in L^\frac{2n}{n-2 \alpha}_{\mathrm{loc}}(\Omega)$, we are legitimate to use estimate \eqref{stima a priori} with $\delta=\delta_{i-1}$, $r=\rho_i$ and $ R=\rho_{i-1}$, thus getting
   \begin{align}
  \left(  \int_{ B_{\rho_i}(x_0) } |Du|^{\delta_i+1} dx \right)^{\frac{\delta_{i-1}+1}{\delta_i +1}}  & \leq \frac{\tilde{c_i}}{(\rho_{i-1}- \rho_i)^{2(1+\alpha)}} \Biggl[ \int_{B_{\rho_{i-1}}(x_0)}  |u|^{\delta_{i-1}+1} dx  \notag\\
 & \qquad +\int_{B_{\rho_{i-1}}(x_0)} |Du|^{\delta_{i-1}+1} dx 
      +[f]^{\delta_{i-1} + 1}_{B_{\frac{n}{2\alpha}, p}^\alpha( B_{R}(x_0) )}  + 1  \Biggr]. 
\end{align}
Since $\rho_{i-1}-\rho_{i}= \frac{R}{2^{i+1}}$, we get
  \begin{align}
  \left(  \int_{ B_{\rho_i}(x_0) } |Du|^{\delta_i+1} dx \right)^{\frac{\delta_{i-1}+1}{\delta_i +1}}  & \leq \frac{\tilde{c_i} 2^{(i+1)(1+\alpha)}}{R^{2(1+\alpha)}} \Biggl[ \int_{B_{\rho_{i-1}}(x_0)}  |u|^{\delta_{i-1}+1} dx  \notag\\
 & \qquad +\int_{B_{\rho_{i-1}}(x_0)} |Du|^{\delta_{i-1}+1} dx 
       % & \qquad +  \frac{\Tilde{c_i}}{(\rho_{i-1}- \rho_i)^{2\alpha }}   \int_{B_{\rho_{i-1}}(x_0)} |Du|^{\delta_{i-1}+1} dx \notag \\
      %  & \qquad + \frac{\Tilde{c_i}}{(\rho_{i-1}- \rho_i)^{2}}  \int_{B_{\rho_{i-1}}(x_0)} |Du|^{\delta_{i-1}+1} dx +{\Tilde{c_i}}\,
      +[f]^{\delta_{i-1} + 1}_{B_{\frac{n}{2\alpha}, p}^\alpha( B_R(x_0) )}  + 1  \Biggr]. 
\end{align}
%Without loss of generality, we can assume that $R\leq 1$. 
Define $$K_i=\max \{1,  \tilde{c_i}2^{2(i+1)(1+\alpha)}  \}.$$
Then, we have
 \begin{align}
  \left(  \int_{ B_{\rho_i}(x_0) } |Du|^{\delta_i+1} dx \right)^{\frac{\delta_{i-1}+1}{\delta_i +1}}  & \leq    \frac{K_i}{R^{2(1+\alpha)}}\Biggl\{\int_{B_{\rho_{i-1}}(x_0)}  |u|^{\delta_{i-1}+1} dx
    \notag\\
 & \qquad +\int_{B_{\rho_{i-1}}(x_0)} |Du|^{\delta_{i-1}+1} dx   + \,[f]^{\delta_{i-1} + 1}_{B_{\frac{n}{2\alpha}, p}^\alpha(  B_{R}(x_0) )}  + 1  \Biggr\}. 
\end{align}
Raising to the power $\frac{1}{\delta_{i-1}+1}$, we find that
 \begin{align}
  \left(  \int_{ B_{\rho_i}(x_0) } |Du|^{\delta_i+1} dx \right)^{\frac{1}{\delta_i +1}}  & \leq    \frac{K_i^{\frac{1}{\delta_{i-1}+1}} 2^{\frac{2}{\delta_{i-1}+1}}}{R^\frac{2(1+\alpha)}{\delta_{i-1}+1}}\Biggl\{\left(\int_{B_{\rho_{i-1}}(x_0)}  |u|^{\delta_{i-1}+1} dx \right)^\frac{1}{\delta_{i-1}+1}  
    \notag\\
 & \qquad + \left( \int_{B_{\rho_{i-1}}(x_0)} |Du|^{\delta_{i-1}+1} dx\right)^{\frac{1}{\delta_{i-1}+1}}   + \,[f]_{B_{\frac{n}{2\alpha}, p}^\alpha(  B_{R}(x_0) )}  + 1 \Biggr\}. 
\end{align}
Using the boundedness of $u$ and the fact $\rho_{i-1} \leq R \le 1,$ we infer
 \begin{align}
  \left(  \int_{ B_{\rho_i}(x_0) } |Du|^{\delta_i+1} dx \right)^{\frac{1}{\delta_i +1}}  & \leq    \frac{K_i^{\frac{1}{\delta_{i-1}+1}} 2^{\frac{2}{\delta_{i-1}+1}}}{R^\frac{2(1+\alpha)}{\delta_{i-1}+1}}\Biggl\{c(n)^\frac{1}{\delta_{i-1}+1} \Vert u\Vert_{L^\infty(B_R(x_0))}
    \notag\\
 & \qquad + \left( \int_{B_{\rho_{i-1}}(x_0)} |Du|^{\delta_{i-1}+1} dx\right)^{\frac{1}{\delta_{i-1}+1}}   + \,[f]_{B_{\frac{n}{2\alpha}, p}^\alpha(B_R(x_0))}  + 1  \Biggr\}. 
\end{align}
Setting $$\overline{K_i}=K_i^{\frac{1}{\delta_{i-1}+1}} 2^{\frac{2}{\delta_{i-1}+1}} c(n)^{\frac{2}{\delta_{i-1}+1}}, \qquad \Gamma= \Vert u\Vert_{L^\infty(B_R(x_0))}+[f]_{B_{\frac{n}{2\alpha}, p}^\alpha(B_R(x_0))}  + 1 , $$
the previous estimate reads as
 \begin{align}
  \left(  \int_{ B_{\rho_i}(x_0) } |Du|^{\delta_i+1} dx \right)^{\frac{1}{\delta_i +1}}  & \leq    \frac{\overline{K}_i}{R^\frac{2(1+\alpha)}{\delta_{i-1}+1}}\Biggl\{ \Gamma +  \left( \int_{B_{\rho_{i-1}}(x_0)} |Du|^{\delta_{i-1}+1} dx\right)^{\frac{1}{\delta_{i-1}+1}}   \Biggr\}. 
\end{align}
Iterating this estimate for $i \in \{ 1, \dots, i_*\}$,  where $i_*$ has been chosen at the beginning of the proof,  we get
\begin{align}
  \left(  \int_{ B_{\rho_{i_*}}(x_0) } |Du|^{\delta_{i_*}+1} dx \right)^{\frac{1}{\delta_{i_*} +1}}  & \leq    \Tilde{K}_R\Biggl\{ \Gamma+  \left( \int_{B_{R}(x_0)} |Du|^{2} dx\right)^{\frac{1}{2}}   \Biggr\},
\end{align}
where $\tilde{K_R}= i_* \prod_{i=1}^{i_*}\frac{\overline{K_i}}{R^{\frac{2(1+\alpha)}{\delta_{i-1}+1}}}$.
Thanks to H\"older's inequality, we have
\begin{align}
  \left(  \int_{ B_{R/2}(x_0) } |Du|^{q} dx \right)^{\frac{1}{q}}  & \leq  c(n,q,\delta_{i_*},R)  \Tilde{K}_R\Biggl\{ \Gamma+  \left( \int_{B_{R}(x_0)} |Du|^{2} dx\right)^{\frac{1}{2}}   \Biggr\},
\end{align}
that is the conclusion.
\end{proof}

\section{Proof of Theorem \ref{main thm}}\label{secapp} 
\noindent In this section, we give the proof of Theorem \ref{main thm}.  First, we define a sequence of regular problems with a unique smooth solution $u_\varepsilon$, satisfying the a priori estimates. Then, we show that the solutions $u_\varepsilon$ converge to the original solution of our problem, which subsequently inherits the same regularity properties. 

\proof[Proof of Theorem \ref{main thm}]
Let us fix a non-negative smooth kernel $\phi \in \mathcal{C}^\infty_0(B_1(0))$ such that $\int_{B_1(0)} \phi =1$ and consider the corresponding family of mollifiers $(\phi_\varepsilon)_{\varepsilon >0}$. For a given ball $B_{2R} \Subset \Omega$ and every $0<\varepsilon < \text{dist}(B_R,\partial \Omega)$, we set
$$A_\varepsilon(x,\xi)= \int_{B_\varepsilon(0)} \phi_{\varepsilon}(y) A(x-y,\xi) dy, \quad f_\varepsilon(x)= f * \phi_\varepsilon(x),$$
for a.e.\ $x \in B_R$ and every $\xi \in \mathbb{R}^n$.
One can easily check that $A_\varepsilon(x,\xi)$  is such that $A_\varepsilon(\cdot,0) \in L^2(B_R)$ and  satisfies assumptions \eqref{A1} and \eqref{A2}. Moreover, denoting by $g^\varepsilon_k= g_k * \phi_\varepsilon$, from \eqref{gk} and \eqref{A3} it follows that
\begin{equation}
    \displaystyle\sum_{k=0}^{\infty} \Vert g^\varepsilon_k \Vert^{p}_{L^\frac{n}{\alpha}(B_R)} < \infty \label{gk1}
\end{equation}
and
\begin{equation}\tag{A4}
    |A_\varepsilon(x,\xi)-A_\varepsilon(y, \xi)| \leq |x-y|^{\alpha} (g^\varepsilon_k(x)+g^\varepsilon_k(y))   (\mu^2+|\xi|^2)^\frac{1}{2}, \label{A4}
\end{equation}

\noindent for a.e.\ $x,y \in B_R$ such that $2^{-k} R \leq |x-y| < 2^{-k+1}R$ and for every $\xi \in \mathbb{R}^{ n}$.

Let $u \in W^{1,2}_{\textrm{loc}}(\Omega)$ be a local weak solution to \eqref{equation} and let  $ u_\varepsilon \in u+ W^{1,2}_0(B_R)$ the solution to the Dirichlet problem
\begin{equation}
    \begin{cases}
      -\text{ div}A_\varepsilon(x,Du_\varepsilon)=f_\varepsilon(x) & \qquad \text{in } B_R,  \\
      u_\varepsilon=u & \qquad \text{on } \partial B_R.
    \end{cases} \label{DP}
\end{equation}
By
standard elliptic regularity (see for example \cite[Chapter 8]{giusti}), we know that $u_\varepsilon \in \mathcal{C}^{1,\beta}_{\textrm{loc}}(B_R)$, for some $\beta \in (0,1)$.

Since $u_\varepsilon \in u+ W_0^{1,2}(B_R)$ is a solution to \eqref{DP} and $u \in W^{1,2}(\Omega)$ is a solution to \eqref{equation}, choosing $\varphi=u_\varepsilon-u$ as test function for both equations, we obtain
\begin{align}
    \int_{B_R} \langle  A_\varepsilon (x,Du_\varepsilon),D(u_\varepsilon-u) \rangle dx= \int_{B_R} f_\varepsilon (u_\varepsilon -u) dx,
\end{align}
and
\begin{align}
    \int_{B_R} \langle  A (x,Du),D(u_\varepsilon-u) \rangle  dx= \int_{B_R} f (u_\varepsilon -u) dx.
\end{align}
Subtracting the previous two equalities, we derive
 \begin{align}
   I= \int_{B_R} \langle  A_\varepsilon (x,Du_\varepsilon)- A(x, Du) \,, \, D(u_\varepsilon-u) \rangle dx= \int_{B_R} \left( f_\varepsilon- f \right) (u_\varepsilon -u) dx. \label{stima app}
\end{align}
We can write the integral on the left-hand side of \eqref{stima app} as follows
 \begin{align}
   I= \int_{B_R} \langle  A_\varepsilon (x,Du_\varepsilon)- A_\varepsilon(x, Du) \,, \, Du_\varepsilon-Du \rangle dx +  \int_{B_R} \langle  A_\varepsilon (x,Du)- A(x, Du) \,, \, Du_\varepsilon-Du \rangle dx,
\end{align}
thus getting
\begin{align}
    \int_{B_R} \langle  A_\varepsilon (x,Du_\varepsilon)- A_\varepsilon(x, Du) \,, \, Du_\varepsilon-Du \rangle dx = & \int_{B_R} \left( f_\varepsilon- f \right) (u_\varepsilon -u) dx \notag\\
    - & \int_{B_R} \langle  A_\varepsilon (x,Du)- A(x, Du) \,, \, Du_\varepsilon-Du \rangle dx.
\end{align}
Then, by assumption \eqref{A1} and H\"older's inequality, we infer
\begin{align}
    \nu \int_{B_R} \lvert Du_\varepsilon - Du \rvert^2 dx & \leq \int_{B_R} \langle  A_\varepsilon (x,Du_\varepsilon)- A_\varepsilon(x, Du) \,, \, Du_\varepsilon-Du \rangle dx \notag \\
    & \leq \int_{B_R} \lvert f_\varepsilon - f\rvert \, \lvert u_\varepsilon - u \rvert dx  \notag \\
    & \qquad + \int_{B_R} \lvert  A_\varepsilon (x,Du)- A(x, Du) \rvert \, \lvert Du_\varepsilon-Du \rvert dx \notag \\
    & \leq \left( \int_{B_R} \lvert f_\varepsilon - f\rvert^{\frac{2n}{n+2}}dx\right)^{\frac{n+2}{2n}}\left( \int_{B_R} \lvert u_\varepsilon - u\rvert^{\frac{2n}{n-2}}dx\right)^{\frac{n-2}{2n}} \notag \\
    & \qquad +\left( \int_{B_R}\lvert  A_\varepsilon (x,Du)- A(x, Du) \rvert^2 dx\right)^{\frac{1}{2}} \left( \int_{B_R}\lvert Du_\varepsilon-Du \rvert^2 dx\right)^{\frac{1}{2}} \notag \\
 & \leq c \left( \int_{B_R}\lvert Du_\varepsilon-Du \rvert^2 dx\right)^{\frac{1}{2}}
    \Biggl\{\left( \int_{B_R} \lvert f_\varepsilon - f\rvert^{\frac{2n}{n+2}}dx\right)^{\frac{n+2}{2n}} \notag \\
    & \qquad +\left( \int_{B_R}\lvert  A_\varepsilon (x,Du)- A(x, Du) \rvert^2 dx\right)^{\frac{1}{2}} \Biggr\}, \notag
\end{align}
where in the last line we used Sobolev-Poincar\'e inequality.
 Next, we divide both sides by $$\left( \int_{B_R}\lvert Du_\varepsilon-Du \rvert^2 dx\right)^{\frac{1}{2}}$$ and square the resulting inequality. This yields 
\begin{align}
    \int_{B_R}\lvert Du_\varepsilon-Du \rvert^2 dx &\leq 
    c\left( \int_{B_R} \lvert f_\varepsilon - f\rvert^{\frac{2n}{n+2}}dx\right)^{\frac{n+2}{n}}  +c \int_{B_R}\lvert  A_\varepsilon (x,Du)- A(x, Du) \rvert^2 dx  \notag \\
    & \leq  c\left( \int_{B_R} \lvert f_\varepsilon - f\rvert^{\frac{n}{2 \alpha}}dx\right)^{\frac{\alpha}{n}}  + c\int_{B_R}\lvert  A_\varepsilon (x,Du)- A(x, Du) \rvert^2 dx , \label{conv grad} 
\end{align}
 where we used that $\frac{2n}{n+2}< \frac{n}{\alpha}$.  By assumption \eqref{A2} and the fact that $A_\varepsilon(x,0) \in L^2(B_R)$, we have
\begin{align}
    \lvert A_\varepsilon (x, Du) \rvert &\leq \lvert A_\varepsilon (x, Du)- A_\varepsilon (x,0)\rvert + \lvert A_\varepsilon (x, 0)\rvert \notag \\
    & \leq L|Du| +|A_\varepsilon (x, 0)|   \in L^2(B_R) . \notag
\end{align}
Since 
$$A_\varepsilon (x, 0) \rightarrow A(x, 0) \quad  \text{in } L^2(B_R) \text{ as } \varepsilon \to 0$$
and
$$ A_\varepsilon (x, Du) \rightarrow A(x,Du) \quad \text{a.e.\ in } B_R \text{ as } \varepsilon \to 0,$$
by Lebesgue dominated convergence Theorem, we get
$$A_\varepsilon (x, Du) \rightarrow A(x,Du) \quad  \text{in } L^2(B_R) \text{ as } \varepsilon \to 0.$$
Therefore, letting $\varepsilon \to 0$ in \eqref{conv grad}, we have that 
$$u_\varepsilon \to u \quad \text{strongly in } W^{1,2}_0(B_R).$$
Now, $u_\varepsilon$ satisfies  inequalities \eqref{Linfty bound} and \eqref{stima ite}, i.e.
\begin{equation}
    \Vert u_\varepsilon \Vert_{L^\infty(B_R)} \le \, cR^{1-\alpha} \Vert f_\varepsilon \Vert_{L^\frac{n}{2\alpha}(B_{2R})}+\dfrac{c}{R^{n/(1-\alpha)}} \left( \fint_{B_{2R}}u_\varepsilon^2 dx  \right)^\frac{1}{2} \notag
\end{equation} 
and
\begin{align}
  \left(  \int_{ B_{R/2} } |Du_\varepsilon|^{q} dx \right)^{\frac{1}{q}}  & \leq  \tilde{c}\Biggl\{ \Vert u_\varepsilon\Vert_{L^\infty(B_R)}+[f_\varepsilon]_{B_{\frac{n}{2\alpha}, p}^\alpha(B_R)}+  \left( \int_{B_{R}} |Du_\varepsilon|^{2} dx\right)^{\frac{1}{2}}  + 1    \Biggr\}. \notag
\end{align}
Thus, we have 
\begin{align}
  \left(  \int_{ B_{R/2} } |Du_\varepsilon|^{q} dx \right)^{\frac{1}{q}}  & \leq  \tilde{c}\left(\Vert f_\varepsilon \Vert_{B_{\frac{n}{2\alpha}, p}^\alpha(B_{2R})}+   \Vert u_\varepsilon \Vert_{W^{1,2}(B_{2R})}  + 1  \right),\notag
\end{align}
where $\tilde{c} $ is a positive constant independent of $\varepsilon$.
Eventually, letting $\varepsilon \to 0$ and  using a covering argument, we conclude that $u \in W^{1,q}_{\textrm{loc}}(\Omega)$. 
\endproof

\section{Example}\label{sec ex}
\noindent
In this section, we exhibit an example showing that Theorem~1.2 is sharp, in the sense that under assumption~(A3), one cannot, in general, expect solutions to~(1.1)
to be locally Lipschitz continuous.   More precisely,
we provide an equation satisfying assumption~(A3) whose solution satisfies
\[
Du \in L^q_{\mathrm{loc}} \quad \text{for every } q<\infty,
\qquad\text{but}\qquad
Du \notin L^\infty_{\mathrm{loc}}.
\]
We recall the following
\begin{prop}
 Setting for every $r \in (0,1)$ and a fixed $r_0>1$
$$v(r)=\log\frac{r_0}{r},$$
the function $u(x)=x_1v(|x|)$
 solves the equation
$$ - \ \mathrm{ div}(A(x)Du)=0 \qquad \text{in } B_1(0), $$
where the matrix $A(x)= (a_{ij}(x))_{\substack{1\leq i \leq n \\ 1 \leq j \leq n}}$ is defined by
\begin{equation}
    a_{ij}(x)= \delta_{ij} + \gamma(|x|) \left(\delta_{ij} - \frac{x_i x_j}{|x|^2} \right), \qquad \gamma(r)= \frac{-n}{(n-1)\log\frac{r_0}{r}}. \label{Matrice}
\end{equation}   
Moreover, $u$ is such that $Du \in L^q_{\mathrm{loc}}(B_1(0))$ for every $q>1$, but $Du \notin L^\infty_{\mathrm{loc}}(B_1(0))$.
\end{prop}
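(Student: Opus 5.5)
The plan is to verify the equation by direct computation, exploiting the special structure of $u$ and of $A$. The key observation is that $A(x)\xi = (1+\gamma)\xi - \gamma\,\frac{\langle x,\xi\rangle}{|x|^2}x$. In other words, $A$ acts as the identity on the radial direction and as multiplication by $1+\gamma$ on tangential directions.

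First compute the gradient. With $r=|x|$, we have $Du = v(r)e_1 + x_1 v'(r)\frac{x}{r}$, where $v'(r)=-1/r$. Splitting $e_1$ into its radial part $\frac{x_1}{r}\frac{x}{r}$ and tangential part $e_1-\frac{x_1 x}{r^2}$ gives
\[
A(x)Du = (1+\gamma)v\Bigl(e_1-\frac{x_1x}{r^2}\Bigr) + \Bigl(v+rv'\Bigr)\frac{x_1 x}{r^2}.
\]
Next, take the divergence using $\operatorname{div}(\phi(r)e_1)=\phi'\frac{x_1}{r}$ and $\operatorname{div}\bigl(\psi(r)\frac{x_1 x}{r^2}\bigr)=\frac{x_1}{r}\bigl(\psi'+\frac{(n-1)}{r}\psi\bigr)$. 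The second identity follows from $\operatorname{div}(x_1x)=(n+1)x_1$ and $D(r^{-2})\cdot x_1x=-2x_1$. Everything factors as $\frac{x_1}{r}$ times an ODE expression in $r$. Setting $w=(1+\gamma)v$, the condition reduces to
\[
w' - w' - \frac{n-1}{r}w + (v+rv')' + \frac{n-1}{r}(v+rv') = 0,
\]
that is,
\[
-\frac{(n-1)}{r}\gamma v + v'' r + 2v' + \frac{n-1}{r}rv' = 0 .
\]
Substituting $v=\log\frac{r_0}{r}$ gives $v'=-1/r$ and $v''=1/r^2$, so the derivative terms sum to $(1-2-(n-1))/r=-n/r$. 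We therefore need $\gamma v = -\frac{n}{n-1}$, which is exactly the choice in \eqref{Matrice}. The main care point is getting these divergence identities right and making sure the $w'$ terms cancel. Rather than trusting this expansion, I would double-check it by recomputing $\partial_i(a_{ij}\partial_j u)$ componentwise.

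It remains to justify the weak formulation across the origin. Since $u\in C^\infty(B_1\setminus\{0\})$ and $|A Du|\le C\log\frac{r_0}{r}$ is integrable, I would test with $\varphi(1-\zeta_\epsilon)$, where $\zeta_\epsilon$ is a cutoff near $0$. The error term is bounded by $C\epsilon^{-1}\log(1/\epsilon)\,\epsilon^n\to0$ because $n\ge2$. The same bound shows $u\in W^{1,2}$.

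For ellipticity, take $r_0$ large enough that $|\gamma|\le\frac{n}{(n-1)\log r_0}<1/2$ on $(0,1)$. Then $A$ is uniformly elliptic and bounded, so \eqref{A1} and \eqref{A2} hold.

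For integrability, $|Du|\le |v|+|x_1||v'|\le \log\frac{r_0}{r}+1$, and $\log\frac1r\in L^q_{\mathrm{loc}}$ for every $q$. On the other hand, along $x=te_2$ we have $Du=v(t)e_1$, which is unbounded as $t\to0$. Since $Du$ is continuous away from $0$, it is not essentially bounded near the origin, so $Du\notin L^\infty_{\mathrm{loc}}$.
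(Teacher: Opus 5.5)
Your proposal is correct. It differs from the paper mainly in that the paper gives no argument of its own: it defers entirely to Proposition 1.5 of Jin, Maz'ya and Van Schaftingen, whereas you verify the statement directly.

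Your splitting of $Du$ into the radial part $(v+rv')\frac{x_1x}{r^2}$ and the tangential part $v\bigl(e_1-\frac{x_1x}{r^2}\bigr)$ makes the computation transparent. The terms containing $\gamma'$ cancel, so the equation on $B_1\setminus\{0\}$ reduces to the algebraic condition $\gamma v=-\frac{n}{n-1}$, which is exactly \eqref{Matrice}.

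You also supply two points that the citation leaves implicit. First, your cutoff argument near the origin (using $n\ge 2$) shows that the pointwise identity gives a genuine $W^{1,2}$ weak solution. Second, you observe that uniform ellipticity requires $r_0$ to be large, and this is a real refinement of the statement as printed. The tangential eigenvalue is $1+\gamma(r)$, whose infimum over $(0,1)$ is $1-\frac{n}{(n-1)\log r_0}$. So for $1<r_0\le e^{n/(n-1)}$ the matrix is not uniformly elliptic in $B_1$, and the example would not satisfy \eqref{A1} as the sharpness discussion needs. Your choice of $r_0$ with $|\gamma|<1/2$ removes this issue.

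The citation buys brevity; your route buys a self-contained check and exposes the hidden constraint on $r_0$.

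One cosmetic slip: $D(r^{-2})\cdot x_1x=-2x_1/r^2$, not $-2x_1$. The identity $\mathrm{div}\bigl(\psi\frac{x_1x}{r^2}\bigr)=\frac{x_1}{r}\bigl(\psi'+\frac{n-1}{r}\psi\bigr)$ that you derive from it is nonetheless correct.
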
 
See \cite[Proposition 1.5]{jin} for the proof.   \\

In order to prove that the operator $A(x,\xi)=A(x) \xi$, with $A(x)$ defined in \eqref{Matrice}, satisfies the assumption \eqref{A3}, it is sufficient to show that 
\begin{equation}
    A(x) \in W^{1,n}_{\textrm{loc}}(B_1(0)) ,\label{bes reg}
\end{equation}
 since by \cite[Remark 3, Section 2.7.1]{Triebel} and Lemma \ref{3.2}, it holds
%\begin{align}
%  W^{1,n}_{\textrm{loc}}(B_1(0)) \subset   B^\beta_{\frac{n}{ \beta},\infty,\textrm{loc}}(B_1(0)) \subset   B^\alpha_{\frac{n}{ \alpha},p,\textrm{loc}}(B_1(0)) , \notag
%\end{align}
%\textcolor{orange}{quella di sopra \'e vera, ma forse conviene scrivere questa? perch\'e la prima inclusione \'e quella nel libro di Triebel}
\begin{align}
  W^{1,n}_{\textrm{loc}}(B_1(0)) \subset   B^\beta_{\frac{n}{ \beta},n,\textrm{loc}}(B_1(0)) \subset   B^\alpha_{\frac{n}{ \alpha},p,\textrm{loc}}(B_1(0)) , \notag
\end{align}
for every $0 < \alpha < \beta <1$. 
\\By virtue of the pointwise characterization of Sobolev spaces in \cite{haj}, \eqref{bes reg} is equivalent to saying that
$$\lvert A(x)- A(y)\rvert \leq |x-y| \left( h(x) + h(y) \right),$$
for some non-negative function $h \in L^n_{\mathrm{loc}}(B_1(0))$ and for all $x,y \in B_1(0)$.
 
From the definition of $a_{ij}(x)$, we have
\begin{align}
    \lvert A (x) - A (y) \rvert &= \sum_{i,j}\lvert a_{ij}(x) - a_{ij}(y) \rvert \notag \\
    & = \sum_{i,j} \left\lvert \left( \gamma(|x|)- \gamma(|y|) \right) \delta_{ij} - \gamma(|x|)\frac{x_i x_j}{|x|^2}+ \gamma(|y|)\frac{y_i y_j}{|y|^2} \right\rvert \notag \\
    &\leq n\left\lvert  \gamma(|x|)- \gamma(|y|) \right\rvert +  \sum_{i,j}   \left\lvert \gamma(|x|)\frac{x_i x_j}{|x|^2}- \gamma(|y|)\frac{y_i y_j}{|y|^2} \right\rvert \notag \\
    &=: n I_1 + I_2. 
\end{align}
Without loss of generality, we assume $|x|> |y|$. From \eqref{Matrice}, we obtain 
\begin{align}
  I_1 &= \frac{n}{n-1} \left\lvert  - \frac{1}{\log\frac{r_0}{|x|}} + \frac{1}{\log\frac{r_0}{|y|}} \right\rvert  \notag \\
  &=  \frac{n}{n-1} \left\lvert  \frac{\log|y| - \log|x|}{\log\frac{r_0}{|x|} \log\frac{r_0}{|y|}} \right\rvert. \notag
  \end{align}
  Since
  \begin{align}
  \lvert \log |y| - \log|x| \rvert&=   \int_{|y|}^{|x|} \frac{1}{t} \,dt \leq  \int_{|y|}^{|x|} \frac{1}{|y|}\, dt =  \frac{1}{|y|} (|x| - |y|)\notag \\
  &\leq (|x| - |y|) \left( \frac{1}{|y|}+ \frac{1}{|x|} \right) \notag \\
   &\leq |x- y| \left( \frac{1}{|y|}+ \frac{1}{|x|} \right), \notag
\end{align}
where we used the fact that $\frac{1}{t} \leq \frac{1}{|y|}$. So, we have
\begin{align}\label{EsI1}
    I_1 \leq& \, \frac{n}{n-1} |x- y| \left( \frac{1}{|x|}+ \frac{1}{|y|}\right)  \frac{1}{\log\frac{r_0}{|x|} \log\frac{r_0}{|y|}}  \notag\\
    \leq & \, \frac{n}{n-1} |x- y| \left( \frac{1}{|x| \,\log\frac{r_0}{|x|} \log\frac{r_0}{|y|}} +  \frac{1}{|y| \,\log\frac{r_0}{|x|} \log\frac{r_0}{|y|}} \right) \notag\\
    \leq & \, c(n,r_0) |x- y| \left( \frac{1}{|x| \,\log\frac{r_0}{|x|} } +  \frac{1}{|y| \,\log\frac{r_0}{|y|}} \right) ,
\end{align}
where we used that $\frac{1}{\log \frac{r_0}{|x|}}$ is bounded in $B_1(0)$.\\
We take care of $I_2$
\begin{align}\label{EsI2}
    I_2 &= \left\lvert \gamma(|x|)\frac{x_i x_j}{|x|^2} - \gamma(|y|)\frac{x_i x_j}{|x|^2}+ \gamma(|y|)\frac{x_i x_j}{|x|^2} - \gamma(|y|)\frac{y_i y_j}{|y|^2} \right\rvert \notag \\
    &\leq  \left\lvert \gamma(|x|) -\gamma(|y|) \right\rvert \frac{|x_i x_j|}{|x|^2} +
    |\gamma(|y|)| \left\lvert \frac{x_i x_j}{|x|^2} - \frac{y_i y_j}{|y|^2} \right\rvert \notag \\
   & =: I'_{2} + I''_{2}.
\end{align}
The term $I'_2$ can be estimated as follows
\begin{align}\label{Es I2'}
    I_2' \leq  \left\lvert \gamma(|x|) -\gamma(|y|) \right\rvert \left( \frac{x_i^2}{2}+\frac{x_j^2}{2} \right)\frac{1}{|x|^2} \leq \left\lvert \gamma(|x|) -\gamma(|y|) \right\rvert.
\end{align}
On the other hand, for $I_2^{''}$ we have
\begin{align}\label{Es I2''}
    I_2'' &= | \gamma(|y|) | \, \frac{\left|x_ix_j |y|^2- y_iy_j|y|^2 + y_iy_j |y|^2 - y_iy_j|x|^2 \right|}{|x|^2 |y|^2} \notag \\
    &\leq | \gamma(|y|) | \, \frac{\left\lvert x_ix_j - y_iy_j \right\rvert |y|^2 +|y_iy_j|\left(|x|^2 -|y|^2\right)}{|x|^2 |y|^2} \notag \\
    &\leq   | \gamma(|y|) | \, \left( \frac{\left\lvert x_ix_j - y_iy_j \right\rvert }{|x|^2 }+\frac{|y_iy_j | \left( |x|^2- |y|^2 \right)}{|x|^2 |y|^2} \right) \notag\\
    &\leq    | \gamma(|y|) | \, \left( \frac{\left\lvert x_ix_j -x_iy_j+x_iy_j- y_iy_j \right\rvert }{|x|^2 }+\left(   \frac{y_i^2}{2}+\frac{y_j^2}{2}\right)\frac{\left(|y|- |x| \right) \left( |y| +|x| \right)}{|x|^2 |y|^2} \right) \notag \\
    & \leq  | \gamma(|y|) | \, \left( \frac{|x_j| |x_i -y_j|+|y_j||x_i- y_i| }{|x|^2 }+ \frac{1}{2}\frac{\left(|y|- |x| \right) \left( |y| +|x| \right)}{|x|^2} \right) \notag \\
    & \leq  | \gamma(|y|) | \, \left( \frac{ |x -y|(|x|+ |y|) }{|x|^2 }+\frac{1}{2} |y-x| \left( \frac{|y|}{|x|^2} + \frac{1}{|x|}\right)      \right) \notag\\
    & \leq \frac{c(n)}{\log \frac{r_0}{|y|}} \, |y-x| \left( \frac{|y|}{|x|^2} + \frac{1}{|x|}\right)   \notag\\
    & \le c(n) |y-x|\left( \frac{|y|}{|x|^2 \, \log \frac{r_0}{|y|}} + \frac{1}{|x|\,\log \frac{r_0}{|y|}}\right) \notag\\
    & \le c(n) |y-x|   \left( \frac{1}{|x| \,\log\frac{r_0}{|x|} } +  \frac{1}{|y| \,\log\frac{r_0}{|y|}} \right) ,
\end{align}
where we used that $|x|>|y|$. Inserting \eqref{Es I2'} and \eqref{Es I2''} in \eqref{EsI2}, we infer
\begin{align}\label{EsI2new}
    I_2 &\leq \left\lvert \gamma(|x|) -\gamma(|y|) \right\rvert + c(n) |y-x|   \left( \frac{1}{|x| \,\log\frac{r_0}{|x|} } +  \frac{1}{|y| \,\log\frac{r_0}{|y|}} \right) \notag \\
    & \le c(n) |y-x|   \left( \frac{1}{|x| \,\log\frac{r_0}{|x|} } +  \frac{1}{|y| \,\log\frac{r_0}{|y|}} \right),
\end{align}
where the first term of r.h.s. has been estimated as \eqref{EsI1}.
Combining \eqref{EsI1} and \eqref{EsI2new}, we find
\begin{align}
     \lvert A (x) - A (y) \rvert  & \leq c(n,r_0) |x- y| \left( \frac{1}{|x| \,\log\frac{r_0}{|x|} } +  \frac{1}{|y| \,\log\frac{r_0}{|y|}} \right).
\end{align}
In order to obtain assumption \eqref{A3}, it is sufficient to choose
$$h(x) := \frac{1}{|x| \log\frac{r_0}{|x|}}.$$
\\It remains to check whether $\frac{1}{|x|\log\frac{r_0}{|x|}} \in L^{n}(B_1(0))$. We have that
\begin{align}
    \int_{B_1(0)} \left( \frac{1}{|x|\log\frac{r_0}{|x|}} \right)^n dx & = c(n) \int_0^1 \frac{1}{\rho  \log^{n} \frac{r_0}{\rho}} \, d \rho < + \infty.\notag
\end{align}

%From the definition of $A(x)$ and the assumption \eqref{A2}, we obtain for every $x,y \in B_1(0)$, $x \neq y$,
%$$\lvert A(x)- A(y)\rvert \leq \frac{c}{\log\frac{r_0}{|x-y|}}.$$
%Therefore, we have that
%$$\lvert A(x)- A(y)\rvert  \leq \frac{c}{\log\frac{r_0}{|x-y|}}\leq|x-y|^\beta \left( h(x) + h(y) \right)$$
%holds true if 
%$$h(x)+h(y) \geq \frac{1}{|x-y|^\beta \log\frac{r_0}{|x-y|}} \qquad \text{for some } \beta > \alpha.$$
%It is sufficient to choose
%$$h(x) := \frac{1}{|x|^\beta \log\frac{r_0}{|x|}},$$
%with $\beta >\alpha$. 
%\\It remains to check whether $\frac{1}{|x|^\beta\log\frac{r_0}{|x|}} \in L^{\frac{n}{\beta}}(B_1(0))$. We have that
%\begin{align}
%    \int_{B_1(0)} \left( \frac{1}{|x|^\beta\log\frac{r_0}{|x|}} \right)^\frac{n}{\beta} dx & = c(n) \int_0^1 \frac{1}{\rho  \log^{\frac{n}{\beta}} \frac{r_0}{\rho}} \, d \rho \notag
%\end{align}
%is finite for $\beta < n$.

\vskip2cm

\noindent
\textbf{Acknowledgments.} The authors are members of the Gruppo Nazionale per l’Analisi Matematica,
la Probabilità e le loro Applicazioni (GNAMPA) of the Istituto Nazionale di Alta Matematica (INdAM). The authors have been partially supported through the INdAM - GNAMPA 2026 Project ``Esistenza e regolarità per soluzioni di equazioni ellittiche e paraboliche anisotrope'' (CUP: E53C25002010001). In addition S. Russo has also been supported through the project: Sustainable Mobility Center (Centro Nazionale per la Mobilità Sostenibile – CNMS) - SPOKE 10, grant number E63C22000930007.

\end{document}